\documentclass{amsart}
\title{Remarks on countable tightness}
\author{Marion Scheepers}

\usepackage{color}

\newtheorem{definition}{{\bf Definition}}
\newtheorem{theorem}{{\bf Theorem}}
\newtheorem{corollary}[theorem]{{\bf Corollary}}
\newtheorem{lemma}[theorem]{{\bf Lemma}}
\newtheorem{proposition}[theorem]{{\bf Proposition}}

\def\Proof{\textbf{Proof.  }}

\newcommand{\reals}{{\mathbb R}}
\newcommand{\naturals}{{\mathbb N}}
\newcommand{\gone}{{\sf G}_1}
\newcommand{\sone}{{\sf S}_1}
\newcommand{\sfin}{{\sf S}_{fin}}
\newcommand{\gfin}{{\sf G}_{fin}}
\newcommand{\cohen}{{\mathbb C}}
\newcommand{\forces}{\mathrel{\|}\joinrel\mathrel{-}}
\newcommand{\poset}{{\mathbb P}}
\newcommand{\open}{\mathcal{O}}

\subjclass[2000]{03E05, 03E35, 54A35, 54D65}
\keywords{Selection principle, countable strong fan tightness, indestructibly countably tight, HFD, generic left separated space, homogeneous ${\sf T}_5$ compactum, selective separability, infinite game}
\address{Department of Mathematics\\ Boise State University\\ Boise, Idaho 83725}
\email{mscheepe@boisestate.edu}

\begin{document}
\begin{abstract}
Countable tightness may be destroyed by countably closed forcing. We characterize the indestructibility of countable tightness under countably closed forcing by combinatorial statements similar to the ones Tall used to characterize indestructibility of the Lindel\"of property under countably closed forcing. We consider the behavior of countable tightness in generic extensions obtained by adding Cohen reals. We show that certain classes of well-studied topological spaces are indestructibly countably tight. Stronger versions of countable tightness, including selective versions of separability, are further explored.
\end{abstract}
\maketitle

Let $(X,\tau)$ be a topological space and let $x$ be an element of $X$. We say that $X$ is \emph{countably tight at }$x$ if there is for each set $A\subseteq X$ with $x\in\overline{A}$, a countable set $B\subseteq A$ such that $x\in \overline{B}$. If the space is countably tight at each of its elements, we say that the space has countable tightness or, equivalently, that the space is countably tight.

It is known that in generic extensions by countably closed posets a ground model space that is countably tight may fail, in the generic extension, to still be a countably tight space. In \cite{AD2} Dow gives an ingenious proof, using reflection arguments, that in the generic extension by an iteration of first the Cohen poset for adding uncountably many Cohen reals, then any countably closed poset, countably tight topological spaces from the ground model remain countably tight. A similar phenomenon regarding the preservation of the Lindel\"of property has been shown by Dow in \cite{AD1}. In \cite{MSFT} we gave an explanation for this phenomenon for Lindel\"of spaces. In Section 1 we show that for reasons very analogous to the Lindel\"of case, this preservation happens for countable tightness.

In Section 2 we more closely investigate the effect of Cohen forcing on countable tightness and establish a connection with countable strong fan tightness. In the subsequent four sections we investigate indestructibility of countable tightness in several classes of topological spaces of countable tightness.

\section{Indestructibility of countable tightness by countably closed forcing}

By analogy with the Lindel\"of case in \cite{Tall}, we say that a topological space is \emph{indestructibly countably tight} if the space is countably tight, and in any generic extension by countably closed forcing the space is still countably tight.
For convenience define, for $x\in X$ not an isolated point of $X$, $\Omega_x = \{A\subseteq X: x\in\overline{A}\setminus A\}$.
Thus, $X$ is countably tight at $x$ if each element of $\Omega_x$ has a countable subset which is in $\Omega_x$. 
From now on, assume that $x$ is an element of $X$, and that $X$ is countably tight at $x$. Following \cite{Tall} for the corresponding notion for Lindel\"of spaces, we define:
\begin{definition}\label{xtightnesstree} A set $T=\{y_f:f\in \cup_{\alpha<\omega_1}\,^{\alpha}\omega\}\subseteq X$ is an \emph{x-tightness tree} if for each $\alpha<\omega_1$ and for each $f\in\,^{\alpha}\omega$ we have $\{y_{f\cup\{(\alpha,n)\}}:\, n<\omega\} \in \Omega_x$.
\end{definition}

We also introduce the following infinite two-person game of length $\alpha$, denoted $\gone^{\alpha}(\Omega_x,\Omega_x)$: In inning $\beta<\alpha$ player ONE first selects an $O_{\beta}\in\Omega_x$, and TWO responds with an $x_{\beta}\in O_{\beta}$.
A play $O_0,\, x_0,\, O_1,\, x_1,\, \cdots,\, O_{\beta},\, x_{\beta},\, \cdots \hspace{0.1in} \beta<\alpha$
is won by Player TWO if $\{x_{\beta}:\beta<\alpha\}\in \Omega_x$; else, ONE wins. In \cite{COC3} this game was examined for the case when $\alpha=\omega$, and in \cite{Gamelength} it was investigated for additional countable lengths.

\begin{theorem}\label{indestrtightchar} For a topological space $X$ which is countably tight at the element $x\in X$, the following are equivalent:
\begin{enumerate}
  \item $X$ is indestructibly countably tight at $x$.
  \item The countable tightness of $X$ at $x$ is preserved upon forcing with ${\sf Fn}(\omega_1,\omega,\omega_1)$, the poset for adding a Cohen subset of $\omega_1$ with countable conditions.
  \item For each $x$-tightness tree $\{y_f:\, f\in \cup_{\alpha<\omega_1}\,^{\alpha}\omega\}$ the set $\{g\in \cup_{\alpha<\omega_1}\,^{\alpha}\omega: \{y_{g\lceil_{\gamma}}:\gamma<{\sf dom}(g)\}\in\Omega_x\}$ is dense in ${\sf Fn}(\omega_1,\omega,\omega_1)$.
  \item For each $x$-tightness tree $\{y_f:\, f\in \cup_{\alpha<\omega_1}\,^{\alpha}\omega\}$ and for each $g\in \cup_{\alpha<\omega_1}\,^{\alpha}\omega$ there is an $f\in\,^{\omega_1}\omega$ such that $f\lceil_{{\sf dom}(g)} = g$ and $\{y_{f\lceil_{\alpha}}: \alpha<\omega_1\} \in \Omega_x$.
  \item For each $x$-tightness tree $\{y_f:\, f\in \cup_{\alpha<\omega_1}\,^{\alpha}\omega\}$ there is an $f\in\cup_{\alpha<\omega_1}\,^{\alpha}\omega$ such that $\{y_{f\lceil_{\alpha}}: \alpha<{\sf dom}(f)\} \in \Omega_x$.
  \item ONE does not have a winning strategy in the game $\gone^{\omega_1}(\Omega_x,\Omega_x)$.
\end{enumerate}
\end{theorem}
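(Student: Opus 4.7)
The plan is to prove the six-way equivalence via the cycle $(1)\Rightarrow(2)\Rightarrow(3)\Rightarrow(4)\Rightarrow(5)\Rightarrow(6)\Rightarrow(1)$, closely paralleling Tall's analogous characterization of indestructible Lindel\"ofness. The implication $(1)\Rightarrow(2)$ is immediate since ${\sf Fn}(\omega_1,\omega,\omega_1)$ is countably closed. For $(2)\Rightarrow(3)$, fix an $x$-tightness tree $\{y_f\}$ and a condition $p$, and let $G$ be generic for ${\sf Fn}(\omega_1,\omega,\omega_1)$ below $p$; the tightness-tree condition combined with a one-step density argument shows that in $V[G]$ the set $A_G=\{y_{G\lceil_\alpha}:\alpha<\omega_1\}$ has $x$ in its closure. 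Countable tightness at $x$ in $V[G]$ then provides a countable $B\subseteq A_G$ with $x\in\overline{B}$; such $B$ is captured by some $G\lceil_\delta$, and since $X$ and its topology lie in $V$ the relation ``$x\in\overline{B}$'' is absolute between $V$ and $V[G]$, so $g=G\lceil_\delta$ is an element of the dense set in question that extends $p$.

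The implications $(3)\Rightarrow(4)$ and $(4)\Rightarrow(5)$ are routine: for $(3)\Rightarrow(4)$, apply density below the given $g$ to obtain a countable $g'\supseteq g$ in the dense set, then extend $g'$ arbitrarily to length $\omega_1$ and observe that enlarging the initial-segment label set preserves ``$x$ in its closure''; for $(4)\Rightarrow(5)$, take $g=\emptyset$ in $(4)$ to obtain a length-$\omega_1$ branch whose initial-segment labels lie in $\Omega_x$, then use ground-model countable tightness at $x$ to extract a countable subset of these labels whose closure contains $x$ and truncate the branch at the supremum of the corresponding indices.

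For $(5)\Rightarrow(6)$, given a strategy $\sigma$ for ONE in $\gone^{\omega_1}(\Omega_x,\Omega_x)$, I inductively assemble an $x$-tightness tree whose branches encode possible plays against $\sigma$: for $f\in{}^{\alpha+1}\omega$, declare $y_f$ to be TWO's $\alpha$-th move in the play recorded by $f$; for $f\in{}^\lambda\omega$ with $\lambda$ a limit, set $y_f$ to equal some designated earlier TWO-move along the branch, so that no genuinely new label is introduced at limit levels. The successor-set condition of the tree then reduces to the fact that $\sigma$'s responses lie in $\Omega_x$. Applying $(5)$ yields a countable $f$ whose initial-segment labels lie in $\Omega_x$, and by the limit-level convention these labels coincide with TWO's moves in the corresponding partial play; continuing to length $\omega_1$ by letting TWO play any legal response produces an $\omega_1$-play against $\sigma$ won by TWO, so $\sigma$ is not winning.

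The principal obstacle is $(6)\Rightarrow(1)$. Suppose for contradiction that countable tightness at $x$ is destroyed by some countably closed poset $\poset$, witnessed by a name $\dot A$ and $p\in\poset$ with $p\forces x\in\overline{\dot A}\setminus\dot A$ and $p\forces$ no countable subset of $\dot A$ has $x$ in its closure. I will construct a winning strategy $\sigma$ for ONE in $\gone^{\omega_1}(\Omega_x,\Omega_x)$. Alongside each play, $\sigma$ maintains a descending chain $\langle p_\beta:\beta<\omega_1\rangle\subseteq\poset$ below $p$. At inning $\beta$ the ground-model set $O_\beta=\{y\in X:(\exists q\leq p_\beta)\,q\forces y\in\dot A\}$ lies in $\Omega_x$: it omits $x$ because $p_\beta\forces x\notin\dot A$, and every neighborhood of $x$ meets it because $p_\beta\forces x\in\overline{\dot A}$. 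So $\sigma$ plays $O_\beta$; when TWO responds with $y_\beta\in O_\beta$, fix $p_{\beta+1}\leq p_\beta$ with $p_{\beta+1}\forces y_\beta\in\dot A$; at a countable limit $\lambda$ use the countable closure of $\poset$ to choose a lower bound $p_\lambda$ of $\langle p_\beta:\beta<\lambda\rangle$. Now suppose some play against $\sigma$ yields $\{y_\beta:\beta<\omega_1\}\in\Omega_x$. Ground-model countable tightness at $x$ provides a countable $\{y_{\beta_n}:n<\omega\}$ with $x$ in its closure in $V$; then $p_\delta$ for $\delta=\sup_n(\beta_n+1)$ forces $\{y_{\beta_n}:n<\omega\}\subseteq\dot A$ and $x\in\overline{\{y_{\beta_n}:n<\omega\}}$ (closure being absolute for ground-model sets), contradicting the choice of $\dot A$. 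Hence no play against $\sigma$ is won by TWO, making $\sigma$ a winning strategy and contradicting $(6)$. The delicate points are verifying $O_\beta\in\Omega_x$ at each stage and threading the descending chain through every countable limit; both crucially use the countable closure of $\poset$ and the absoluteness of closure for ground-model sets.
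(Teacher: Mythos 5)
Your proof is correct and, although it reuses the two essential constructions of the paper (the density argument showing the generic branch accumulates at $x$, and the descending chain of conditions $p_\beta$ paired with the sets $\{y\in X:(\exists q\le p_\beta)(q\forces ``\check{y}\in\dot{A}")\}$), it organizes them into a genuinely different cycle. The paper proves $(2)\Rightarrow(1)$ directly --- given an arbitrary countably closed $\poset$ destroying tightness at $x$, it builds an $x$-tightness tree of points and conditions indexed by $\cup_{\alpha<\omega_1}\,^{\alpha}\omega$ and shows the ${\sf Fn}(\omega_1,\omega,\omega_1)$-generic branch also destroys tightness --- and then obtains $(1)\Rightarrow(3)$ contrapositively, observing that a failure of density yields a cone which is itself a destructive countably closed poset. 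You instead take $(1)\Rightarrow(2)$ as trivial and prove $(2)\Rightarrow(3)$ by a direct genericity argument: the generic branch has $x$ in its closure by a one-step density argument, the preserved countable tightness supplies a countable witness, countable closure of the poset pulls the relevant initial segment $G\lceil_{\delta}$ back into the ground model, and absoluteness of ``$x\in\overline{B}$'' for ground-model $B$ finishes. This decomposition means you run the ``tree of conditions below an arbitrary countably closed poset'' construction only once, in $(6)\Rightarrow(1)$ (which you phrase contrapositively but which is otherwise the paper's argument, with ONE permitted to play the possibly uncountable sets $O_\beta$ instead of countable refinements), whereas the paper runs it twice; what the paper's version buys is the explicit bridge from arbitrary countably closed posets to tightness trees that it later reuses for its general Theorem 2. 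Your $(4)\Rightarrow(5)$ also correctly isolates where the standing hypothesis of ground-model countable tightness is needed (to truncate the $\omega_1$-branch), a point the paper leaves implicit. Two details you should make explicit, both routine: in $(5)\Rightarrow(6)$ you must first replace ONE's strategy by one whose moves are countable members of $\Omega_x$ (possible by countable tightness at $x$), since otherwise TWO's options cannot be indexed by $\omega$ and the construction does not produce an $\omega$-branching tree; and in $(2)\Rightarrow(3)$ one should first extend the given condition $p$ to one with ordinal domain before truncating the generic branch above it.
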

$\Proof$ 
That $(2)\Rightarrow (1)$: We follow the argument in \cite{Shelah}, adapted to the current context. Thus, let $(\poset,<)$ be  a countably closed partially ordered set, and assume that ${\mathbf 1}_{\poset} \forces ``\check{X} \mbox{ is not countably tight at }\check{x}$." Fix a $\poset$-name $\dot{A}$ and a $p\in\poset$ such that
$p \forces ``\dot{A}\in \Omega_x \mbox{ but for each countable }C\subseteq \dot{A}, \, C\not\in\Omega_x."$
Put $p_{\emptyset} = p$ and let $\eta_{\emptyset}$ be the empty sequence of length $0$. Define
$F_{\emptyset} = \{y\in X: (\exists q\le p_{\emptyset})(q\forces ``\check{y}\in \dot{A}")\}.$
{\flushleft{{\bf Claim 1: }$F_{\emptyset} \in \Omega_x$:}}\\
Suppose that on the contrary $x$ is not in the closure of $F_{\emptyset}$. Choose a neighborhood $U$ of $x$ with $U\cap F_{\emptyset}=\emptyset$. As $p\forces``\dot{A}\in\Omega_{\check{x}}"$ we find that $p\forces ``\check{U}\cap\dot{A}\neq\emptyset"$. Choose a $q\le p$ and a $y\in X$ auch that $q\forces``\check{y}\in\check{U}\cap\dot{A}"$. Then $y$ is in $F_{\emptyset}$, and as $y$ is in $U$ we find the contradiction that $U\cap F_{\emptyset}\neq\emptyset$. This completes the proof of Claim 1.

As $X$ is countably tight, choose a countable $C_{\emptyset}\subseteq F_{\emptyset}$ with $x\in\overline{C_{\emptyset}}$. Enumerate $C_{\emptyset}$ bijectively as $(y_{n}:n<\omega).$ For each $n$ choose $p_{(n)}<p_{\emptyset}$ such that $p_{(n)}\forces ``\check{y}_n\in\dot{A}"$.

This specifies $p_{\eta}$ and $y_{\eta}$ for $\eta\in\,^1\omega$. Now let $0<\alpha<\omega_1$ be given, and assume that for each $\beta<\alpha$ and $\sigma\in\,^{\beta}\omega$ we have selected $p_{\sigma}\in\mathbb{P}$ and $y_{\sigma}\in X$ such that for all$\gamma<\beta$ we have $\{y_{\sigma\lceil_{\gamma}\frown\{(\gamma,n)\}}:n<\omega\}\in\Omega_x$ and $p_{\sigma}\forces ``y_{\sigma}\in \dot{A}"$.

Now we distinguish two cases: $\alpha$ is a limit ordinal, or $\alpha$ is a successor ordinal. 
{\flushleft{Case 1: }}$\alpha$ is a limit ordinal. Then for each $\sigma\in \,^{\alpha}\omega$ choose a $p_{\sigma}\in\poset$ such that for each $\beta<\alpha$ we have $p_{\sigma}<p_{\sigma\lceil_{\beta}}$. This is possible since $\poset$ is countably closed.

{\flushleft{Case 2: }}$\alpha$ is a successor ordinal.
Say $\alpha = \beta+1$. For each $\sigma\in\,^{\beta}\omega$ define
\[
  F_{\sigma} = \{y\in X: (\exists q\le p_{\sigma})(q\forces ``\check{y}\in \dot{A}")\}.
\]
{\flushleft{{\bf Claim 2: }$F_{\sigma} \in \Omega_x$:}}\\
The proof of Claim 2 proceeds like the proof of Claim 1.

As $X$ is countably tight, choose a countable $C_{\sigma}\subseteq F_{\sigma}$ with $x\in\overline{C_{\sigma}}$. Enumerate $C_{\sigma}$ bijectively as $(y_{\sigma\cup\{(\beta,n)\}}:n<\omega)$ and for each $n$ choose $p_{\sigma\cup\{(\beta,n)\}}<p_{\sigma}$ such that $p_{\sigma\cup\{(\beta,n)\}}\forces ``\check{y}_{\sigma\cup\{(\beta,n)\}}\in\dot{A}"$.

This defines $p_{\tau}$ for each $\tau$ in $^{\alpha}\omega$,  and when $\alpha$ is a successor ordinal this also defines each $y_{\tau}$. 

We now show that in the generic extension by ${\sf Fn}(\omega_1,\omega,\omega_1)$ $X$ fails to have countable tightness. For let $g\in\, ^{\omega_1}\omega$ be ${\sf Fn}(\omega_1,\omega,\omega_1)$-generic, and put
\[
  B = \{y_{g\lceil{\alpha}}:\alpha<\omega_1\}.
\]
{\flushleft{\bf Claim 3: }}$B\in\Omega_x$.

For consider any open neighborhood $U$ of $x$. Then $D_{U} = \{\eta\in\,\cup_{\alpha<\omega_1}\,^{\alpha}\omega: y_{\eta}\in U\}$ is a dense subset of ${\sf Fn}(\omega_1,\omega,\omega_1)$. For let any $p\in{\sf Fn}(\omega_1,\omega,\omega_1)$ be given. We may assume that ${\sf dom}(p)=\alpha<\omega_1$. Since $\{y_{p\frown\{(\alpha,n)\}}:n<\omega\}$ is a member of $\Omega_x$, we have $U\cap \{y_{p\frown\{(\alpha,n)\}}:n<\omega\}\neq \emptyset$. Choose $n<\omega$ with $y_{p\frown\{(\alpha,n)\}}\in U$. Then $q = p\frown\{(\alpha,n)\} \in D_U$ and $q<p$. Since the generic filter producing $g$ meets this dense set we find that $U\cap B\neq\emptyset$. It follows that $B$ is a member of $\Omega_x$.

{\flushleft{\bf Claim 4: }}No countable subset of $B$ is in $\Omega_x$.

For fix a $\beta<\omega_1$ and consider $B_{\beta} = \{y_{g\lceil_{\gamma}}:\gamma<\beta\}$. If it were the case that $B_{\beta}$ is an element of $\Omega_x$, then we would have $p_{\beta}\forces ``\check{B}_{\beta}\subseteq\dot{A}"$ and $p_{\beta}\forces``\check{B}_{\beta}\in\Omega_x"$. This contradicts the selection of the $\poset$-name $\dot{A}$.

{\flushleft{ That $(1)\Rightarrow (3)$: }} 
Assume that (3) fails. Fix an $x$-tightness tree $\{y_f:\, f\in \cup_{\alpha<\omega_1}\,^{\alpha}\omega\}$ that witnesses this failure. Since the set $D:=\{g\in \cup_{\alpha<\omega_1}\,^{\alpha}\omega: \{y_{g\lceil_{\gamma}}:\gamma<{\sf dom}(g)\}\in\Omega_x\}$ is not dense in ${\sf Fn}(\omega_1,\omega,\omega_1)$, fix a $p\in {\sf Fn}(\omega_1,\omega,\omega_1)$ for which there is no $g\in D$ with $g<p$. We may assume that ${\sf dom}(p)=\alpha<\omega_1$. Then for each $g\in \cup_{\alpha<\omega_1}\,^{\alpha}\omega$ with $g<p$ we have $\{y_{g\lceil_{\gamma}}:\gamma<{\sf dom}(g)\}$ is not in $\Omega_x$. But then for each generic filter $G$ of ${\sf Fn}(\omega_1,\omega,\omega_1)$ which contains $p$, if $h$ is the corresponding generic element, then $B = \{y_{h\lceil{\alpha}}:\alpha<\omega_1\}$ is a member of $\Omega_x$, but no countable subset of it is in $\Omega_x$. But then $\poset = \{g\in{\sf Fn}(\omega_1,\omega,\omega_1):g<p\}$ with the inherited order of ${\sf Fn}(\omega_1,\omega,\omega_1)$ is a countably closed partially ordered set forcing that $X$ is not countably tight.

{\flushleft{ That $(3)\Rightarrow (4)$ and $(4)\Rightarrow(5)$: }} 
These implications follow directly.
{\flushleft{ That $(5)\Rightarrow (6)$: }} A strategy $F$ of ONE together with the fact that $X$ is a space of countable tightness provides an $x$-tightness tree as follows:\\
$F$ calls on ONE to play members of $\Omega_x$. As $X$ is of countable tightness, we may assume that ONE's moves are countable elements of $\Omega_x$. Thus, enumerate $F(\emptyset)$ bijectively as $\{y_{\{(0,n)\}}:n<\omega\}$. For $\alpha<\omega_1$ assume that we have already defined for each $\beta<\alpha$ and each $g\in\,^{\beta}\omega$ a $y_g\in X$ such that $\{y_{g\lceil_{\gamma}\cup\{(\gamma,n)\}}:n<\omega\} = F(y_{g\lceil_0},\cdots,y_{g\lceil_\gamma})\,  \in \Omega_x,\, (\gamma<\beta)$.
{\flushleft{\bf Case 1: }$\alpha = \beta+1$, a successor ordinal. }\\
Then we define $\{y_{g\cup\{(\beta,n)\}}:n<\omega\} = F(y_{g\lceil_0},\cdots,y_{g\lceil_\gamma},\,\cdots,\,y_g)$

{\flushleft{\bf Case 2: }$\alpha$ is a limit ordinal. }\\
In this case we choose $y_g\in F(g\lceil_{\gamma}:\gamma<\alpha)$ arbitrarily.

But then the set $\{y_g:\, g\in\,\cup_{\alpha<\omega_1}\,^{\alpha}\omega\}$ is an $x$-tightness tree. Applying $(5)$ we fix an $f\in\cup_{\alpha<\omega_1}\,^{\alpha}\omega$ such that $\{y_{f\lceil_{\beta}}:\beta<{\sf dom}(f)\}\in \Omega_x$. But then $f$ codes a play of the game against $F$ in which TWO won. This shows that $F$ is not a winning strategy for ONE in $\gone^{\omega_1}(\Omega_x,\Omega_x)$.

{\flushleft{ That $(6)\Rightarrow (1)$: }}\\
Let $(\poset,<)$ be a countably closed partially ordered set and let $X$ be a topological space that is countably tight at $x\in X$. 

Let $\dot{A}$ be a $\poset$ name such that ${\mathbf 1}_{\poset}\forces``\check{x}\in\overline{\dot{A}}"$.
Choose an arbitrary member $p$ of $\poset$. We now use ideas as in the proof of $(2)\Rightarrow(1)$ to define a strategy $F$ of ONE in the game $\gone^{\omega_1}(\Omega_x,\Omega_x)$.

To begin, define $H_{\emptyset} = \{y\in X: (\exists q\le p)(q\forces ``\check{y}\in\dot{A}")\}$.
As in Claim 1 above, $H_{\emptyset}\in\Omega_x$. Since $X$ has countable tightness at $x$, choose $F(\emptyset) = C_{\emptyset}\subseteq H_{\emptyset}$ countable with $x\in\overline{C}_{\emptyset}$. Enumerate $C_{\emptyset}$ as $(y_{(n)}:n<\omega)$. For each $n$ choose $p_{(n)}<p$ such that $p_{(n)}\forces``y_{(n)}\in \dot{A}$". Now $H_{\emptyset}$, $F(\emptyset)$, $p_{(n)},\, n<\omega$ and $y_{(n)}\, n<\omega$ are specified. 

To  describe the rest of the recursive construction of ONE's strategy $F$, suppose that $0<\alpha<\omega_1$ is given, and that for each $\gamma<\alpha$, and each $\sigma\in \,^{\gamma}\omega$ we already have specified:\\
the set $H_{\sigma}$, a countable subset $C_{\sigma}$ of $H_{\sigma}$, element $p_{\sigma}$ of $\poset$, and if $\gamma$ is a successor ordinal, $y_{\sigma}\in X$ such that
\begin{enumerate}
  \item{$H_{\sigma} = \{y\in X:(\exists q\le p_{\sigma})(q\forces ``\check{y}\in\dot{A}")\}$ is in $\Omega_x$;} 
  \item{$F(y_{\nu}:\nu \subset \sigma \mbox{ and }dom(\nu)\mbox{ a successor ordinal}) = C_{\sigma} \subseteq H_{\sigma}$ is a countable set which is a member of $\Omega_x$;} 
  \item{If $\xi = dom(\sigma)<\alpha$ then $C_{\sigma} = \{y_{\sigma\cup\{(\xi,n)\}}:n<\omega\}$.}  
  \item{$p_{\sigma} < p_{\nu}$ for each $\nu\in\,^{\xi}\omega$ with $\nu\subset \sigma$;} 
  \item{If $dom(\sigma)$ is a successor ordinal, then $p_{\sigma}\forces ``y_{\sigma}\in\dot{A}"$;}
\end{enumerate}
We must now specify these parameters for $\tau\in\, ^{\alpha}\omega$.\\

{\flushleft{\underline{Case 1: }} $\alpha = \beta+1$, a successor ordinal.} Consider any $\sigma\in\,^{\beta}\omega$. Since $p_{\sigma}$ is already defined, we have $H_{\sigma} = \{y\in X: (\exists q\le p_{\sigma})(q\forces``\check{y}\in\dot{A}")\}$
where as in Claim 1 above, $H_{\sigma}$ is an element of $\Omega_x$. By the countable tightness of $X$ at $x$ the countable set $C_{\sigma}\subseteq H_{\sigma}$ is selected such that $C_{\sigma}$ is an element of $\Omega_x$, and we define
\[
  F(y_{\nu}:\nu\subset\sigma \mbox{ and }dom(\nu)\mbox{ a successor ordinal}) = C_{\sigma}.
\]
By enumerating $C_{\sigma}$ as $\{y_{\sigma\cup\{(\beta,n)\}}:n<\omega\}$ we specify $y_{\tau}$ for each $\tau$ in $^{\alpha}\omega$ which extends $\sigma$. Then for each of these $y_{\tau}$ we choose a $p_{\tau}<p_{\sigma}$ such that $p_{\tau}\forces``\check{y}_{\tau}\in \dot{A}."$

{\flushleft{\underline{Case 2: }} $\alpha$ is a limit ordinal.} For $\sigma\in\,^{\alpha}\omega$ choose, by the countable closedness of $\poset$, a $p_{\sigma}\in\poset$ such that for each initial segment $\nu$ of $\sigma$ we have $p_{\sigma}<p_{\nu}$ and then define
\[
  H_{\sigma}=\{y\in X:(\exists q\le p_{\sigma})(q\forces``\check{y}\in\dot{A}")\}.
\]
As before $H_{\sigma}$ is a member of $\Omega_x$, and the recursive construction can continue.

Since the defined $F$ is a strategy for ONE in the game $\gone^{\omega_1}(\Omega_x,\Omega_x)$, our hypothesis implies that $F$ is not a winning strategy for ONE. Thus, choose an $F$-play lost by ONE. This play is of the following form: For an $f \in\,^{\omega_1}\omega$ we have the sequence 
\[
   ((F(y_{f\lceil_{\beta}}:\beta<\alpha \mbox{ a successor ordinal}),y_{f\lceil_{\alpha}}):\alpha<\omega_1 \mbox{ a successor 
    ordinal})
\]
for which the set $\{y_{f\lceil_{\alpha}}:\alpha<\omega_1 \mbox{ a successor ordinal}\}$ of player TWO's moves in the play is a member of $\Omega_x$. Using the countable tightness of $X$ at $x$ again, we find that there is a countable ordinal $\beta<\omega_1$ for which the set
\[
  D = \{y_{f\lceil_{\alpha}}:\alpha<\beta \mbox{ a successor ordinal}\}
\]
is in $\Omega_x$. But then we have $p_{f\lceil_{\beta}}\forces``\check{D}\subseteq \dot{A} \mbox{ is a countable element of }\Omega_{\check{x}}."$

Thus, we find that for each $p$ in $\poset$ there is a $q<p$ which forces that $x$ is in the closure of some countable subset of $\dot{A}$. It follows that 
\[
  {\mathbf 1}_{\poset}\forces``\check{X} \mbox{ has countable tightness at }\check{x}." \hspace{0.1in} \Box
\]

 The referee of an earlier version of this paper suggested that there ought to be a common generalization of the above characterization of indestructible countable tightness, and the analogous characterization of Tall's notion of indestructibly Lindel\"of. The following remarks are a small step in this direction.

 Fix two sets $R$ and $S$. Consider the formula $\Phi(S,T)$ which is of the form $(\forall x\in S)(\exists y\in T)\Psi(x,y)$. Define, from $R$ and $S$, the family $\mathcal{A}_{S,R} = \{T\subseteq R:\, \Phi(S,T)\}$. Assume that $\mathcal{A}_{S,R}$ has the properties that 
\begin{itemize}
\item[I]{If $T\in \mathcal{A}_{S,R}$ and $T\subseteq U\subseteq R$, then $U\in\mathcal{A}_{S,R}$, and}
\item[II]{for each element $T$ of $\mathcal{A}_{S,R}$ there is a countable subset $C\subseteq T$ with $C\in\mathcal{A}_{S,R}$.}
\end{itemize}

A function  $F:\,^{<\omega_1}\omega \longrightarrow R$ with the property that for each $\alpha<\omega_1$ and for each $f$ in $^{\alpha}\omega$
\[
  \{F(f\frown(\alpha,n)):n<\omega\} \in \mathcal{A}_{S,R}
\]
is said to be an $\mathcal{A}_{S,R}$-tree.

 Following the ideas in the proof of Theorem \ref{indestrtightchar}, one obtains the following theorem:
\begin{theorem}\label{generalizedTh} Assume that $\Psi(x,y)$ is either of $x\in y$, or $y\in x$. Let $R$ and $S$ be sets. The following statements are equivalent:
\begin{enumerate}
\item{For each countably closed partially ordered set ${\mathbb P}$,  
          \[  
              {\mathbf 1}_{\mathbb P} \forces ``\mbox{Each element of } \dot{\mathcal{A}}_{\check{S},\check{R}}  \mbox{ has a countable subset that is a member of  }\dot{\mathcal{A}}_{\check{S},\check{R}}"
          \]
       }
\item{ ${\sf Fn}(\omega_1,\omega,\omega_1) \forces ``\mbox{Each element of } \dot{\mathcal{A}}_{\check{S},\check{R}} \mbox{ has a countable subset that is a }\\$ $\mbox{member of  }\dot{\mathcal{A}}_{\check{S},\check{R}}"$}
\item{For each $\mathcal{A}_{S,R}$-tree $F$ the set $\{g\in\,^{<\omega_1}\omega: \{F(g\lceil_{\gamma}):\gamma<dom(g)\}\in\mathcal{A}_{S,R}\}$ is dense in ${\sf Fn}(\omega_1,\omega,\omega_1)$.}
\item{For each $\mathcal{A}_{S,R}$-tree $F$ and for each $g\in\,^{<\omega_1}\omega$ there is an $f$ in $^{\omega_1}\omega$ such that $g\subset f$, and $\{F(f\lceil_{\gamma}):\gamma<\omega_1\}$ is a member of $\mathcal{A}_{S,R}$.}
\item{For each $\mathcal{A}_{S,R}$-tree $F$ there is an $f$ in $^{<\omega_1}\omega$ such that $\{F(f\lceil_{\gamma}):\gamma< dom(f)\}$ is a member of $\mathcal{A}_{S,R}$.}
\item{ONE has no winning strategy in the game $\gone^{\omega_1}(\mathcal{A}_{S,R},\mathcal{A}_{S,R})$.}
\end{enumerate}
\end{theorem}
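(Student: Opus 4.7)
The plan is to mimic the cycle of implications in Theorem \ref{indestrtightchar}, replacing $\Omega_x$ throughout by $\mathcal{A}_{S,R}$ and using Properties I and II in place of the countable tightness of $X$ at $x$. The implication $(1)\Rightarrow(2)$ is immediate because ${\sf Fn}(\omega_1,\omega,\omega_1)$ is countably closed; for the remaining cycle $(2)\Rightarrow(1)\Rightarrow(3)\Rightarrow(4)\Rightarrow(5)\Rightarrow(6)\Rightarrow(1)$ I would carry out direct translations of the constructions in the proof of Theorem \ref{indestrtightchar}, with the family $\mathcal{A}_{S,R}$ standing in for the local base of neighborhoods of $x$.

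The one place where the restriction on $\Psi$ is genuinely used is the analogue of Claim 1. For $(2)\Rightarrow(1)$ I would assume a countably closed $({\mathbb P},<)$ and $p \in {\mathbb P}$ forcing both $\dot{A} \in \mathcal{A}_{\check{S},\check{R}}$ and that no countable subset of $\dot{A}$ lies in $\mathcal{A}_{\check{S},\check{R}}$, and recursively build $p_\sigma \in {\mathbb P}$ and $y_\sigma \in R$ so that at each stage
\[
F_\sigma \;=\; \{y \in R : (\exists q \le p_\sigma)(q \forces \check{y} \in \dot{A})\}
\]
belongs to $\mathcal{A}_{S,R}$. To verify this, suppose toward contradiction that some $x \in S$ satisfies $\neg\Psi(x,y)$ for every $y \in F_\sigma$. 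Since $p_\sigma$ forces $(\exists y \in \dot{A})\Psi(\check{x},y)$, there are $q \le p_\sigma$ and $y \in R$ with $q \forces \check{y} \in \dot{A} \wedge \Psi(\check{x},\check{y})$; because $\Psi(u,v)$ is one of $u \in v$ or $v \in u$, the statement $\Psi(\check{x},\check{y})$ is absolute between $V$ and $V[G]$, so $\Psi(x,y)$ already holds in $V$ and then $y \in F_\sigma$ contradicts the choice of $x$. Property II then provides the countable $C_\sigma \subseteq F_\sigma$ needed for the enumeration $\{y_{\sigma\cup\{(\beta,n)\}} : n<\omega\}$, after which an ${\sf Fn}(\omega_1,\omega,\omega_1)$-generic $g$ produces, exactly as in Claims 3 and 4, a configuration contradicting (2).

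The remaining implications should translate mechanically: for $(1)\Rightarrow(3)$, a condition $p$ below which the density set is empty generates a countably closed sub-poset of ${\sf Fn}(\omega_1,\omega,\omega_1)$ forcing the failure of (1) in the abstract sense; $(3)\Rightarrow(4)$ follows by building a descending chain of conditions in the dense set for $\omega_1$ stages and invoking Property I on cofinal initial segments; $(4)\Rightarrow(5)$ uses Property II to extract a countable $\mathcal{A}_{S,R}$-subset of the $\omega_1$-sequence and Property I on the countable initial segment it spans; $(5)\Rightarrow(6)$ turns a strategy $F$ of ONE into an $\mathcal{A}_{S,R}$-tree, using Property II to enumerate ONE's moves as countable members of $\mathcal{A}_{S,R}$, whose good branch guaranteed by (5) codes a play lost by ONE; and $(6)\Rightarrow(1)$ reruns the recursion of $(2)\Rightarrow(1)$ as the definition of a strategy for ONE in $\gone^{\omega_1}(\mathcal{A}_{S,R},\mathcal{A}_{S,R})$, so that a play defeating this strategy yields a countable $D \in \mathcal{A}_{S,R}$ and a condition $p_{f\lceil_\beta}$ forcing $\check{D} \subseteq \dot{A}$.

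The main obstacle will be pinpointing exactly where the restriction on $\Psi$ is used and checking that it supplies enough absoluteness — namely, that $\Psi(\check{x},\check{y})$ has the same truth value in $V$ and $V[G]$, and hence that membership of a ground-model set in $\mathcal{A}_{S,R}$ is preserved into the extension. Once this single point is isolated, the remainder is a straightforward transcription of the proof of Theorem \ref{indestrtightchar}, with Properties I and II taking the roles formerly played by the topological closure operator and by the assumption of countable tightness at $x$.
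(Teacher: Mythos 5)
Your plan coincides with the paper's own proof, which consists precisely of the remark that one follows the argument of Theorem \ref{indestrtightchar}; your transcription, with Properties I and II standing in for the monotonicity of closure and for countable tightness at $x$, is a faithful rendering of that. You have also correctly isolated the one genuinely non-mechanical ingredient, namely that the atomicity of $\Psi$ yields absoluteness of $\Psi(\check{x},\check{y})$ and hence of membership of ground-model sets in $\mathcal{A}_{S,R}$ between the ground model and its extensions, which is exactly what makes the analogues of Claims 1 and 4 go through.
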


 If $\Psi(x,y)$ is  $x\in y$ we get the indestructibly Lindel\"of notion by taking $R$ to be the underlying space and $S$ to be the topology of the space; if it is $y\in x$, we get the indestructibly countably tight at the point $p$ notion by taking $R$ to be a neighborhood base of the specific point $p$, and $S$ to be the underlying set of the space. It would be interesting to know for which formulae $\Psi(x,y)$ besides these two atomic formulae of the language of set theory one can prove the equivalences of Theorem \ref{generalizedTh}.

Not all is lost when countable tightness is destroyed by a countably closed partial order: A topological space $(X,\tau)$ is said to have \emph{countable extent} if each closed, discrete subspace of $X$ is countable. The Lindel\"of property implies countable extent.  Tall (\cite{Tall}, Lemma 8), and independently Dow \cite{AD1}, proved:
\begin{lemma}[Dow, Tall]\label{tightnessextent} If a Lindel\"of space has countable tightness, then in generic extensions by countably closed partially ordered sets the space has countable extent.
\end{lemma}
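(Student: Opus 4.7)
My plan is to derive a contradiction from the assumption that some countably closed partial order $(\mathbb{P}, <)$ has a condition $p$ forcing $\dot{D}$ to be an uncountable closed discrete subset of $X$. Working in $V$, I would recursively build a decreasing $\omega_1$-chain of conditions $(q_\alpha : \alpha < \omega_1)$ below $p$ together with distinct points $(c_\alpha : \alpha < \omega_1)$ of $X$ such that $q_\alpha \forces \check{c}_\alpha \in \dot{D}$.

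The successor step rests on the observation that for every $q \leq p$, the pool
\[
  E_q = \{y \in X : (\exists r \leq q)(r \forces \check{y} \in \dot{D})\}
\]
is uncountable in $V$: otherwise $q \forces \dot{D} \subseteq \check{E}_q$ would force $\dot{D}$ countable, contradicting $q \forces |\dot{D}| \geq \aleph_1$. Thus at stage $\alpha = \beta+1$, $E_{q_\beta}$ is uncountable, so I would pick $c_\alpha \in E_{q_\beta}$ distinct from all earlier $c_\gamma$ and then $q_\alpha \leq q_\beta$ forcing $\check{c}_\alpha \in \dot{D}$. Countable closedness of $\mathbb{P}$ supplies $q_\alpha$ at limit stages.

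Next I would invoke the ground-model hypotheses. The set $A = \{c_\alpha : \alpha < \omega_1\}$ lies in $V$ and is uncountable, so Lindel\"ofness of $X$ in $V$ (via its consequence of countable extent) produces an accumulation point $x^* \in X$, and countable tightness of $X$ at $x^*$ produces a countable $C \subseteq A \setminus \{x^*\}$ with $x^* \in \overline{C}^V$. Writing $C = \{c_{\alpha_n}:n<\omega\}$ and setting $\alpha^* = \sup_n \alpha_n + 1 < \omega_1$, the condition $q_{\alpha^*}$ lies below every $q_{\alpha_n}$ and hence forces $\check{C} \subseteq \dot{D}$.

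To close the argument, I would use the absoluteness of closures of ground-model sets: $x^* \in \overline{C}^V$ is witnessed at each basic open neighborhood of $x^*$, so it persists in any extension, and therefore $q_{\alpha^*} \forces \check{x}^* \in \overline{\check{C}} \subseteq \overline{\dot{D}} = \dot{D}$ (the final equality because $\dot{D}$ is forced closed). Combined with $q_{\alpha^*} \forces \check{C} \subseteq \dot{D} \setminus \{\check{x}^*\}$, this forces $\check{x}^*$ to be a non-isolated point of $\dot{D}$, contradicting the forced discreteness of $\dot{D}$. The delicate step is ensuring the distinctness of the $c_\alpha$'s along the $\omega_1$-chain; this is where the uncountability of each $E_{q_\beta}$, and so the forcing hypothesis $q \forces |\dot{D}| \geq \aleph_1$, enters essentially. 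The rest is a routine combination of countable closedness, Lindel\"ofness, and countable tightness.
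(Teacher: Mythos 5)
Your argument is correct. Note that the paper does not actually prove Lemma \ref{tightnessextent}; it only cites Tall (Lemma 8 of \cite{Tall}) and Dow \cite{AD1}, so there is no in-paper proof to compare against. Your proof is the standard one and uses exactly the machinery the paper deploys elsewhere (the sets $E_q=\{y: (\exists r\le q)(r\forces \check{y}\in\dot{D})\}$ are the analogues of the sets $F_\sigma$, $H_\sigma$ in the proof of Theorem \ref{indestrtightchar}): the key points --- that each $E_q$ is uncountable because $q\forces\dot{D}\subseteq\check{E}_q$ and countability of ground-model sets is upward absolute, that countable closedness is only ever needed for the countable initial segments of the chain (never for the full $\omega_1$-chain), that Lindel\"ofness yields a point $x^*$ with $x^*\in\overline{A\setminus\{x^*\}}$, and that membership of a ground-model point in the closure of a ground-model set is preserved because the ground-model topology is a base for the topology of the extension --- are all present and correctly used. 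The only cosmetic issue is the bookkeeping at limit stages (you should also select $c_\alpha$ there, or simply index the points by successor ordinals), which does not affect the argument.
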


\section{Countable strong fan tightness and Cohen reals}

Recall that for families $\mathcal{A}$ and $\mathcal{B}$ of sets the symbol $\sone(\mathcal{A},\mathcal{B})$
denotes the statement that there is for each sequence $(O_n:n\in\naturals)$ of elements of $\mathcal{A}$ a corresponding sequence $(x_n:n\in\naturals)$ such that for each $n$ we have $x_n\in O_n$, and $\{x_n:n\in\naturals\}\in\mathcal{B}$.

In \cite{Sakai} Sakai defined the notion of countable strong fan tightness at $x$, which in our notation is $\sone(\Omega_x,\Omega_x)$. It is clear that if for some countable ordinal $\alpha$ ONE has no winning strategy in the game $\gone^{\alpha}(\Omega_x,\Omega_x)$, then the space has countable strong fan tightness at $x$. It is not in general true that if a space has countable strong fan tightness at a point $x$, then ONE has no winning strategy in the game $\gone^{\omega}(\Omega_x,\Omega_x)$ - see pp. 250 - 251 of \cite{COC3} for an ad hoc example. In Theorem \ref{onevstightness} below we give another example under the Continuum Hypothesis, {\sf CH}.

Theorem \ref{indestrtightchar} implies that spaces where ONE does not have a winning strategy in the game $\gone^{\omega}(\Omega_x,\Omega_x)$ are indestructibly countably tight at $x$. We now show, by rewriting the proof of $(1)\Rightarrow(2)$ of Theorem 13B of \cite{COC3} into the forcing context, that in the generic extension by uncountably many Cohen reals a ground model space that is countably tight at a point $x$ is converted to a space in which ONE has no winning strategy in the game $\gone^{\omega}(\Omega_x,\Omega_x)$. For uncountable cardinals $\kappa$ let $\cohen(\kappa)$ denote the poset for adding $\kappa$ Cohen reals. We use the following lemma of Dow \cite{AD2}, Lemma 5.2:
\begin{lemma}[Dow]\label{tightnesslemma}
Let $\kappa$ be an infinite cardinal and let $X$ be a topological space which is countably tight at $x\in X$. Then
\[
 {\mathbf 1}_{\cohen(\kappa)}\forces ``\check{X} \mbox{ is countably tight at }\check{x}."
\]
\end{lemma}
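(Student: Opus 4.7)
The plan is to translate $\dot{A}$ into a ground-model family of open subsets of the ambient Cantor cube, use ground-model countable tightness to build by an $\omega$-length recursion a single countable $B\subseteq X$ whose associated opens are dense on a countable coordinate support, and finally exploit Cohen genericity to conclude that $A\cap B$ witnesses countable tightness at $x$ in the extension.

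I would start by fixing a name $\dot{A}$ with $\mathbf{1}_{\cohen(\kappa)}\forces\dot{A}\in\Omega_{\check{x}}$, identifying $\cohen(\kappa)$ with finite partial functions on an index set $I$ of size $\kappa$ so that a generic is a $c\in 2^I$. For each $y\in X$ choose a maximal antichain $A_y^+$ of conditions forcing $\check{y}\in\dot{A}$ and let $U_y = \bigcup_{p\in A_y^+}[p]\subseteq 2^I$; then $U_y$ is open, supported on a countable coordinate set $S_y\subseteq I$ (since $\cohen(\kappa)$ is ccc), and $y\in\dot{A}^G$ iff $c\in U_y$. The hypothesis $\mathbf{1}_{\cohen(\kappa)}\forces\check{x}\in\overline{\dot{A}}$ translates into: for each ground-model open neighborhood $N$ of $x$ the set $\bigcup_{y\in N}U_y$ is dense open in $2^I$; unpacked basic-clopen by basic-clopen, for every finite partial function $q$ on $I$ the set $Y_q = \{y\in X : U_y\cap[q]\neq\emptyset\}$ is a member of $\Omega_x$ in $V$.

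Next construct $B$ by recursion. Take any countable $B_0\in\Omega_x$ and let $S_0 = \bigcup_{y\in B_0}S_y$ (countable). At stage $n+1$, for each of the countably many finite partial functions $q$ with $\mathrm{dom}(q)\subseteq S_n$ use countable tightness at $x$ in $V$ applied to $Y_q$ to select a countable $C_q\subseteq Y_q$ with $x\in\overline{C_q}$; put $B_{n+1} = B_n\cup\bigcup_q C_q$ and $S_{n+1} = \bigcup_{y\in B_{n+1}}S_y$. Set $B = \bigcup_n B_n$ and $S = \bigcup_n S_n$; both are countable, $S = \bigcup_{y\in B}S_y$, and every $U_y$ with $y\in B$ is supported on $S$. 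For each ground-model neighborhood $N$ of $x$, the open set $W_{B,N} = \bigcup_{y\in B\cap N}U_y\subseteq 2^I$ is then the preimage of some open $W^{S}_{B,N}\subseteq 2^S$ under the restriction $2^I\to 2^S$, and the stage-by-stage construction guarantees that $W^{S}_{B,N}$ meets every basic clopen of $2^S$, so $W^{S}_{B,N}$ is dense in $2^S$.

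To conclude, use the product factoring $\cohen(\kappa)\cong\cohen(S)\times\cohen(I\setminus S)$: the restriction of the Cohen generic $c$ to $S$ is $\cohen(S)$-generic over $V$, hence meets the ground-model dense open set $W^{S}_{B,N}$, producing some $y\in B\cap N$ with $c\in U_y$, i.e.\ $y\in A\cap B\cap N$. Thus $x\in\overline{A\cap B}$ in $V[G]$ and $A\cap B$ is countable. The main obstacle is the self-referential character of the stage-$(n+1)$ choice in step three: one simultaneously wants $B$ countable and the family $\{U_y : y\in B\cap N\}$ dense below every basic clopen on the countable support that $B$ itself generates. The interleaving---\emph{add witnesses for each cylinder on the current support $S_n$} followed by \emph{update $S_{n+1}$ to absorb the new witnesses' supports}---closes both demands off in countably many stages and is essentially the only non-trivial ingredient beyond the ground-model countable tightness at $x$.
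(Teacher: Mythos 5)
Your proof is correct. Note, however, that the paper offers no proof of this lemma at all: it is quoted verbatim as Lemma 5.2 of Dow's paper \cite{AD2}, so there is no in-paper argument to compare against. Your argument is a sound, self-contained replacement. The translation of $\dot{A}$ into the family $\{U_y : y\in X\}$ of opens in $2^I$ via countable (by ccc) maximal antichains is the standard nice-name device, and the equivalence ``$\mathbf{1}\forces \check{x}\in\overline{\dot{A}}$ iff each $Y_q$ is in $\Omega_x$'' is verified correctly since the ground-model open sets remain a base for $\check{X}$ in the extension. The genuinely non-trivial point is exactly the one you isolate: the countable set $B$ must be dense-below-every-cylinder on the support that $B$ itself generates, and your $\omega$-stage interleaving (handle all cylinders on $S_n$, then enlarge to $S_{n+1}$) closes this off because any single cylinder has finite domain and hence lands in some $S_n$. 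This closing-off recursion is the hands-on equivalent of the countable elementary submodel that Dow's original reflection-style argument uses; your version buys elementarity-free explicitness at the cost of carrying the supports $S_y$ by hand, while Dow's buys brevity. Two cosmetic remarks: you should either reduce to $\mathbf{1}_{\cohen(\kappa)}\forces\dot{A}\in\Omega_{\check{x}}$ from an arbitrary condition $p$ (standard, by modifying the name off $p$ or working in the cone below $p$), and your $B_0$ plays no real role --- one may start the recursion with $B_0=\emptyset$, $S_0=\emptyset$, since the stage-one set $C_{\emptyset}\subseteq Y_{\emptyset}$ already seeds the construction.
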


\begin{theorem}\label{cohenrealsconvertcountablytight} Let $\kappa$ be an uncountable cardinal. If $(X,\tau)$ is a topological space of countable tightness at $x$, then
\[
  {\mathbf 1}_{\cohen(\kappa)}\forces ``\mbox{ONE has no winning strategy in the game }\gone^{\omega}(\Omega_x,\Omega_x)".
\]
\end{theorem}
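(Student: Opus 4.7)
The plan is to pass to the generic extension $V[G]$ by $\cohen(\kappa)$, fix an arbitrary strategy $F$ of ONE in $\gone^{\omega}(\Omega_x,\Omega_x)$, and construct a play against $F$ that TWO wins, using a Cohen real as a generic branch through the game tree of $F$. First I invoke Lemma~\ref{tightnesslemma} to see that $X$ remains countably tight at $x$ in $V[G]$, which lets me assume every $F$-move is a countable element of $\Omega_x$. Unfolding $F$ recursively, I obtain a tree $T:\omega^{<\omega}\times\omega\to X$ in $V[G]$ such that, for each $s\in\omega^{<\omega}$, the set $\{T(s,n):n<\omega\}$ is $F$'s response to the finite play whose $i$-th TWO-move is $T(s\lceil_i,s(i))$, and in particular $\{T(s,n):n<\omega\}\in\Omega_x$.

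The next step is to place the whole tree $T$ inside an intermediate extension of $V$ obtained from only countably many Cohen coordinates. Fixing in $V$ a bijection of $X$ with an ordinal, I turn $T$ into an $\omega$-sequence of ordinals of $V$ (after a $V$-bijection $\omega\leftrightarrow\omega^{<\omega}\times\omega$). The ccc of $\cohen(\kappa)$ together with the finite-support structure of its conditions then yields: for each coordinate $n<\omega$ there is a countable maximal antichain deciding the value, each condition in it has finite support, and the union $S\subseteq\kappa$ of all those supports is countable. I set $V_1=V[G\cap\cohen(S)]$ and extract from these antichains a $\cohen(S)$-name for $T$, so $T\in V_1$. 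Fixing any $\beta\in\kappa\setminus S$ and using the factorization $\cohen(\kappa)\cong\cohen(S)\times\cohen(\kappa\setminus S)$, I find that the $\beta$-th Cohen real $c\in V[G]$ is Cohen over $V_1$.

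I then use $c\in\omega^{\omega}$ as a branch through $T$: TWO plays $x_i:=T(c\lceil_i,c(i))$ and $A:=\{x_i:i<\omega\}$. Since the topology $\tau$ of $X$ lives in $V\subseteq V_1$, every open neighborhood $U$ of $x$ belongs to $V_1$, and in $V_1$ the set
\[
  D_U=\{s\in\omega^{<\omega}:\exists\, i<|s|,\ T(s\lceil_i,s(i))\in U\}
\]
is dense-open in the Cohen poset $\omega^{<\omega}$: given $s$, $\{T(s,n):n<\omega\}\in\Omega_x$ meets $U$, so $s\frown(n)\in D_U$ for a suitable $n$. Genericity of $c$ over $V_1$ then forces $A\cap U\neq\emptyset$, giving $x\in\overline{A}$ in $V[G]$; and $x\notin A$ because each $x_i$ lies in a set belonging to $\Omega_x$, which excludes $x$ by definition. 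Thus $A\in\Omega_x$, TWO wins, and $F$ is not a winning strategy. Since $F$ and the condition under which we worked were arbitrary, the forcing statement follows.

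The main obstacle I anticipate is the step that places the whole tree $T$ --- not merely each of its individual values --- inside $V_1$, which is precisely where the ccc and finite-support structure of $\cohen(\kappa)$ must be used carefully. A subsidiary bookkeeping issue is to choose uniformly, via countable tightness at $x$ in $V[G]$, the enumeration $n\mapsto T(s,n)$ of each $F$-move so that $T$ is a single function (hence has a single name) rather than a family of separately chosen countable sets. Once those points are settled, the density/genericity argument of the third paragraph is routine.
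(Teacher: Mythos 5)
Your proposal is correct and takes essentially the same route as the paper's proof: both invoke Lemma~\ref{tightnesslemma} to shrink ONE's moves to countable members of $\Omega_x$, unfold the strategy into a countable tree, use the ccc and finite supports to capture that tree in an intermediate extension by countably many Cohen coordinates, and then let a later Cohen real serve as a generic branch along which TWO's choices accumulate at $x$. The paper phrases the last step as the generic real avoiding the first-category sets $F_V$, which is just the complementary form of your dense-open sets $D_U$.
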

$\Proof$
Let $\dot{\sigma}$ be a $\cohen(\kappa)$ name such that 
\[
  \mathbf{1}_{\cohen(\kappa)}\forces``\dot{\sigma}\mbox{ is a strategy of ONE in }\gone^{\omega}(\Omega_x,\Omega_x)."
\]
By Lemma \ref{tightnesslemma} $\mathbf{1}_{\cohen(\kappa)}\forces``\check{X}\mbox{ is countably tight at }\check{x}."$ 
Therefore we have
\[
  {\mathbf 1}_{\cohen(\kappa)}\forces ``\dot{\sigma}(\emptyset) \mbox{ has a countable subset which is a member of }\Omega_{\check{x}}."
\]
Choose a $\cohen(\kappa)$ name $\dot{O}_{\emptyset}$ such that 
\[
  {\mathbf 1}_{\cohen(\kappa)}\forces ``\dot{O}_{\emptyset}\subseteq \dot{\sigma}(\emptyset) \mbox{ is a countable subset with $\check{x}$ in its closure}."
\]
Thus choose $\cohen(\kappa)$ names $\dot{y}_n$, $n<\omega$ such that ${\mathbf 1}_{\cohen(\kappa)}\forces ``\dot{C}_{\emptyset}=\{\dot{y}_n:n<\omega\}."$
Then we have 
${\mathbf 1}_{\cohen(\kappa)}\forces ``(\forall n)(\dot{\sigma}(\dot{y}_n) \mbox{ has a countable subset which is in }\Omega_{\check{x}})."$
For each $n$ we choose $\cohen(\kappa)$ names $\dot{C}_n$ and $\dot{y}_{n,k}$, $k<\omega$ such that 
\[
  {\mathbf 1}_{\cohen(\kappa)}\forces ``\dot{C}_{n}\subseteq \dot{\sigma}(\dot{y}_n) \mbox{ is a countable subset with $\check{x}$ in its closure}"
\]
and ${\mathbf 1}_{\cohen(\kappa)}\forces ``\dot{C}_{n}=\{\dot{y}_{n,k}:k<\omega\}"$
and so on.
In this way we find for each finite sequence $n_1,\cdots,n_k$ of elements of $\omega$ $\cohen(\kappa)$ names $\dot{C}_{n_1,\cdots,n_k}$ and $\dot{y}_{n_1,\cdots,n_k}$ such that 
\[
  {\mathbf 1}_{\cohen(\kappa)}\forces ``\{\dot{y}_{n_1,\cdots,n_k,m}:m<\omega\}=\dot{C}_{n_1,\cdots,n_k}"
\]
and
\[
  {\mathbf 1}_{\cohen(\kappa)}\forces ``\dot{C}_{n_1,\cdots,n_k}\subseteq \dot{\sigma}(\dot{y}_{n_1},\cdots,\dot{y}_{n_1,\cdots,n_k})"
\]
and
\[
  {\mathbf 1}_{\cohen(\kappa)}\forces ``\dot{C}_{n_1,\cdots,n_k} \mbox{ is a countable set in }\Omega_{\check{x}}"
\]

Since $\cohen(\kappa)$ has the countable chain condition and each of the names $\dot{y}_{\tau}$ and $\dot{C}_{\tau}$ is a name for a single element of $X$ or a countable set of elements of $X$, there is an $\alpha<\kappa$ such that each of these is a $\cohen(\alpha)$ name. Thus, factoring the forcing as $\cohen(\alpha)*\cohen(\lbrack \alpha,\kappa))$ we may assume that all the named objects are in the ground model. Then, in the generic extension by $\cohen(\lbrack\alpha,\kappa))$ over this ground model there is a function $f\in \,^{\omega}\omega$ such that $f$ is not in any first category set definable from parameters in the ground model.

Now for each neighborhood $V$ of $x$ in the ground model define, in the ground model
$F_V = \{f\in\,^{\omega}\omega:(\forall k)(y_{f\lceil_k}\not\in V)\}$
is first category and is definable from parameters in the ground model only. Thus, in the generic extension by $\cohen(\lbrack \alpha,\kappa))$ $\bigcup\{F_V: \, V \mbox{ a neighborhood of $x$}\} \neq \,^{\omega}\omega$. Choose in this generic extension an $f$ with 
\[
  f\in\,^{\omega}\omega\setminus \bigcup\{F_V: V \mbox{ a ground model neighborhood of }x\}
\]
Then in the generic extension the $\sigma$-play during which TWO selected the sets $y_{f\lceil_{n}}$, $0<n<\omega$ is won by TWO.
This completes the proof that in the generic extension ONE has no winning strategy in the game $\gone^{\omega}(\Omega_x,\Omega_x)$ on $X$.
$\Box$

The reader familiar with the argument in \cite{MSFT} that adding uncountably many Cohen reals over a ground model converts all ground model Lindel\"of spaces to Rothberger spaces would notice that essentially the same argument is used in the proof of Theorem \ref{cohenrealsconvertcountablytight}. Indeed, both results are a special case of the more general fact that for families $\mathcal{A}$ of sets having the property that each element of $\mathcal{A}$ has a countable subset that still is a member of $\mathcal{A}$, and for which this property is preserved by addition of Cohen reals, the property is converted to ONE not having a winning strategy in the game $\gone^{\omega}(\mathcal{A},\mathcal{A})$ in the generic extension by uncountably many Cohen reals.

The property that ONE has no winning strategy in the game $\gone^{\omega}(\Omega_x,\Omega_x)$ is preserved by countably closed forcing:
\begin{theorem}\label{countablyclosed} If $(X,\tau)$ is a topological space for which ONE has no winning strategy in the game $\gone^{\omega}(\Omega_x,\Omega_x)$, then for any countably closed partially ordered set $\poset$, 
\[
  {\mathbf 1}_{\poset}\forces ``\mbox{ONE has no winning strategy in the game }\gone^{\omega}(\Omega_x,\Omega_x)".
\]
\end{theorem}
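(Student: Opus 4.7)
Suppose toward a contradiction that $(\poset,<)$ is countably closed, $p \in \poset$, and $\dot\sigma$ is a $\poset$-name such that $p \forces ``\dot\sigma \mbox{ is a winning strategy for ONE in }\gone^{\omega}(\Omega_{\check x},\Omega_{\check x})."$ The plan is to convert $\dot\sigma$ into a strategy $F$ for ONE in the ground model game $\gone^{\omega}(\Omega_x,\Omega_x)$ by imitating the $(6)\Rightarrow(1)$ direction of the proof of Theorem \ref{indestrtightchar}, but along the tree $\,^{<\omega}\omega$ of finite sequences, and using the countable closedness of $\poset$ only once, at the end of the $\omega$-long play, rather than at each limit stage.

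Recursively on legal partial plays $\bar y = (y_0,\ldots,y_{n-1})$ --- that is, sequences for which $y_i \in F(y_0,\ldots,y_{i-1})$ for every $i<n$ --- I will define conditions $p_{\bar y} \in \poset$ with $p_{\emptyset} = p$ and $p_{\bar y\frown(y_n)} \le p_{\bar y}$, together with
\[
F(\bar y) := \{y\in X : (\exists q \le p_{\bar y})(q \forces ``\check y \in \dot\sigma(\check y_0,\ldots,\check y_{n-1})")\}.
\]
The Claim 1 argument from the proof of Theorem \ref{indestrtightchar} applies verbatim, with $p_{\bar y}$ in place of $p$ and $\dot\sigma(\check y_0,\ldots,\check y_{n-1})$ in place of $\dot A$, to give $x \in \overline{F(\bar y)}$; moreover $p_{\bar y}\forces ``\dot\sigma(\check y_0,\ldots,\check y_{n-1})\in\Omega_{\check x}"$ already forces $\check x\notin \dot\sigma(\check y_0,\ldots,\check y_{n-1})$, whence also $x \notin F(\bar y)$. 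Hence $F(\bar y)\in\Omega_x$. When TWO responds with some $y_n \in F(\bar y)$, extend to $p_{\bar y\frown(y_n)} \le p_{\bar y}$ forcing $``\check y_n \in \dot\sigma(\check y_0,\ldots,\check y_{n-1})."$

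By the hypothesis of the theorem, the ground model $F$ is not winning for ONE, so fix an $F$-play $(y_n:n<\omega)$ with $\{y_n:n<\omega\}\in\Omega_x$ in $V$. Countable closedness of $\poset$ supplies a condition $q$ below every $p_{(y_0,\ldots,y_n)}$. Since neither the set $\{y_n:n<\omega\}$ nor any open neighborhood of $x$ is altered in passing from $V$ to the generic extension, $x\in\overline{\{y_n:n<\omega\}}\setminus\{y_n:n<\omega\}$ remains true there, so $q \forces ``\{\check y_n:n<\omega\}\in\Omega_{\check x}."$ At the same time $q$ forces that $(\check y_n:n<\omega)$ is a legal sequence of TWO's responses against $\dot\sigma$; so $q$ forces that this $\dot\sigma$-play is won by TWO, contradicting $p\forces ``\dot\sigma\mbox{ is winning for ONE}."$ The only conceptual point that requires care is the absoluteness of the statement ``$x$ lies in the closure of a given ground model countable subset of $X\setminus\{x\}$,'' and this is immediate because $\poset$ introduces no new open sets of $X$; the rest is a direct forcing-theoretic transcription of the strategy-coding argument already used in the paper.
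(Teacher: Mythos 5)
Your proposal is correct and follows essentially the same route as the paper: convert the name $\dot\sigma$ into a ground model strategy $F$ by letting $F$ of a partial play be the set of all points that some extension of the accumulated condition forces into $\dot\sigma$'s corresponding move, invoke the hypothesis to obtain an $F$-play lost by ONE, and use countable closedness once to find a single condition below the resulting $\omega$-chain that forces the $\dot\sigma$-play to be won by TWO. The only (harmless) divergence is that the paper first arranges for $\dot\sigma$'s moves, and hence $F$'s values, to be countable so that conditions can be indexed by finite sequences of integers, whereas you index conditions directly by legal partial plays and so do not need that reduction.
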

$\Proof$ Let $(X,\tau)$ be a topological space for which ONE has no winning strategy in the game $\gone^{\omega}(\Omega_x,\Omega_x)$. Let $(\poset,<)$ be a countably closed partially ordered set. Let $\dot{\sigma}$ be a $\poset$-name for a strategy of ONE in the game $\gone^{\omega}(\Omega_x,\Omega_x)$, played in the generic extension, but on the ground model space $X$. Thus,
\[
  {\mathbf 1}_{\poset}\forces``\dot{\sigma} \mbox{ is a strategy of ONE in }\gone^{\omega}(\Omega_{\check{x}},\Omega_{\check{x}}) \mbox{ played on }\check{X}"
\]
We must show
${\mathbf 1}_{\poset}\forces``\dot{\sigma} \mbox{ is not a winning strategy for ONE.}"$
Thus, let $p\in\poset$ be given. We will find a $q<p$ such that $q$ forces that $\dot{\sigma}$ is not a winning strategy for ONE. 

By Theorem \ref{indestrtightchar} we have ${\mathbf 1}_{\poset}\forces``\check{X} \mbox{ is countably tight}."$ Thus, we may assume that 
\[
  {\mathbf 1}_{\poset}\forces``\mbox{ for each finite sequence }(\check{x}_1,\cdots,\check{x}_n) \mbox{ from }\check{X}, \dot{\sigma}(\check{x}_1,\cdots,\check{x}_n) \mbox{ is countable}"
\]
Define $F(\emptyset) = \{y\in X:(\exists q\le p)(q\forces ``\check{y}\in \dot{\sigma}(\emptyset)")\}$. As in Claim 1 of Theorem 1 we have in the ground model that $F(\emptyset)$ is an element of $\Omega_x$, and we may assume that $F(\emptyset)$ is countable. Enumerate $F(\emptyset)$ as $(y_n:n<\omega)$ and choose for each $n$ a $q_n<p$ such that $q_n\forces``\check{y}_n \in \dot{\sigma}(\emptyset)"$. 

Then, for each $n_1$, define $F(y_{n_1}) = \{y\in X:(\exists q\le q_{n_1})(q\forces ``\check{y}\in \dot{\sigma}(\check{y}_{n_1})")\}$. As before the ground model set $F(y_{n_1})$ is an element of $\Omega_x$ and may be assumed countable, and thus may be enumerated as $(y_{n_1,n}:n<\omega)$. Choose for each $n$ a $q_{n_1,n}<q_{n_1}$ such that $q_{n_1,n}\forces``\check{y}_{n_1,n}\in\dot{\sigma}(\check{y}_{n_1})"$.

Next, for each $(n_1,n_2)$ define
\[
  F(y_{n_1},y_{n_1,n_2}) = \{y\in X:(\exists q\le q_{n_1,n_2})(q\forces ``\check{y}\in \dot{\sigma}(\check{y}_{n_1},\check{y}_{n_1,n_2})")\}.
\]
Then the ground model set $F(y_{n_1},y_{n_1,n_2})$ is an element of $\Omega_x$ which may be assumed countable. Enumerate $F(y_{n_1},y_{n_1,n_2})$ as as $(y_{n_1,n_2,n}:n<\omega)$. Choose for each $n$ a $q_{n_1,n_2,n}<q_{n_1,n_2}$ such that $q_{n_1,n_2,n}\forces``\check{y}_{n_1,n_2,n}\in\dot{\sigma}(\check{y}_{n_1}\check{y}_{n_1,n_2})"$, and so on.

In this way we define a strategy $F$ for ONE in the ground model. But in the ground model ONE has no winning strategy in $\gone^{\omega}(\Omega_x,\Omega_x)$. Thus, fix and $F$-play lost by ONE. This specifies an $f\in\,^{\omega}\omega$ such that for each $n$ we have $y_{f\lceil_{n+1}} \in F(y_{f(0)}, \cdots, y_{f\lceil_n})$, and $\{y_{f\lceil_n}:n<\omega\}\in\Omega_x$. But since $\poset$ is countably closed, choose a $q\in\poset$ such that for each $n$ we have $q<q_{f\lceil_n}$, $n<\omega$. Then we have $q<p$ and $q$ forces 
$``(\dot{\sigma}(\emptyset),\check{y}_{f(0)},\dot{\sigma}(\check{y}_{f(0)}), \check{y}_{f(0),f(1)},\, \cdots,\dot{\sigma}(\check{y}_{f(0)},\cdots,\check{y}_{f\lceil_n}), \check{y}_{f\lceil_{n+1}},\cdots)$ is a $\dot{\sigma}$-play lost by ONE." In particular, 
\[
  q\forces``\dot{\sigma} \mbox{ is not a winning strategy for ONE in }\gone^{\omega}(\Omega_{\check{x}},\Omega_{\check{x}})."
\]
This completes the proof. $\Box$

As a result we obtain the following strengthening of \cite{AD2} Lemma 5.6:
\begin{corollary}\label{dow5.6improved}
Let $(X,\tau)$ be a space which is countably tight at $x\in X$. Let $\kappa$ be an uncountable cardinal and let $\dot{\poset}$ be a $\cohen(\kappa)$ name for a countably closed poset. Then
\[
  {\mathbf 1}_{\cohen(\kappa)*\dot{\poset}} \forces``\check{X}\mbox{ has countable strong fan tightness at }\check{x}".
\]
\end{corollary}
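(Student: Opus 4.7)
The proof is essentially a two-step chaining of Theorems \ref{cohenrealsconvertcountablytight} and \ref{countablyclosed}, combined with the observation (made in the paragraph preceding Theorem \ref{cohenrealsconvertcountablytight}) that if ONE has no winning strategy in $\gone^{\omega}(\Omega_x,\Omega_x)$ then the space has countable strong fan tightness $\sone(\Omega_x,\Omega_x)$ at $x$. So the goal is to show that in the final iterated extension, ONE has no winning strategy in $\gone^{\omega}(\Omega_{\check{x}},\Omega_{\check{x}})$ played on $\check{X}$.

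First, I would apply Theorem \ref{cohenrealsconvertcountablytight} to the ground model space $(X,\tau)$, which is countably tight at $x$, to conclude
\[
  \mathbf{1}_{\cohen(\kappa)} \forces ``\mbox{ONE has no winning strategy in }\gone^{\omega}(\Omega_{\check{x}},\Omega_{\check{x}}) \mbox{ on }\check{X}."
\]

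Next, I would work inside the generic extension $V[G]$ by $\cohen(\kappa)$. In $V[G]$, the name $\dot{\poset}$ is interpreted as a countably closed poset, and by the previous step the space $X$ (now viewed as a space of $V[G]$) has the property that ONE has no winning strategy in $\gone^{\omega}(\Omega_x,\Omega_x)$. Applying Theorem \ref{countablyclosed} inside $V[G]$ to this space and to the interpreted poset, I get that after forcing with this countably closed poset ONE still has no winning strategy in $\gone^{\omega}(\Omega_x,\Omega_x)$ on $X$. Translating back to the two-step iteration,
\[
  \mathbf{1}_{\cohen(\kappa)*\dot{\poset}} \forces ``\mbox{ONE has no winning strategy in }\gone^{\omega}(\Omega_{\check{x}},\Omega_{\check{x}}) \mbox{ on }\check{X}."
\]

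Finally, since the nonexistence of a winning strategy for ONE in $\gone^{\omega}(\Omega_x,\Omega_x)$ implies $\sone(\Omega_x,\Omega_x)$, which is exactly Sakai's countable strong fan tightness at $x$, the desired forcing statement follows. There is no serious obstacle here; the only point requiring a moment of care is the relativization argument when applying Theorem \ref{countablyclosed} inside the Cohen extension, but this is routine once one notes that both the hypothesis (countable tightness, now promoted to nonexistence of a winning strategy) and the statement of Theorem \ref{countablyclosed} are absolute to the intermediate model $V[G]$.
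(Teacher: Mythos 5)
Your proposal is correct and is exactly the argument the paper intends: the corollary is stated immediately after Theorem \ref{countablyclosed} as a consequence of chaining Theorem \ref{cohenrealsconvertcountablytight} with Theorem \ref{countablyclosed} in the intermediate extension, and then invoking the fact that ONE lacking a winning strategy in $\gone^{\omega}(\Omega_x,\Omega_x)$ implies $\sone(\Omega_x,\Omega_x)$. No gaps.
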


\section{Tightness and hereditarily separable spaces}

The following Lemma is well-known: 
\begin{lemma}\label{HSistight}
If $X$ is a hereditarily separable space, then $X$ has countable tightness at each of its elements.
\end{lemma}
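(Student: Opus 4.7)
The plan is to unravel the definitions and use separability of the subspace $A$ itself. Fix a point $x \in X$ and a set $A \subseteq X$ with $x \in \overline{A}$. The goal is to produce a countable $B \subseteq A$ with $x \in \overline{B}$.

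First I would apply hereditary separability directly to the subspace $A$: since every subspace of $X$ is separable, there is a countable set $B \subseteq A$ which is dense in $A$ with respect to the subspace topology. Unfolding this, $B$ dense in $A$ means $A \subseteq \overline{B}^A = \overline{B} \cap A$, where the overline without superscript denotes closure in $X$. In particular $A \subseteq \overline{B}$, so taking closures in $X$ gives $\overline{A} \subseteq \overline{B}$.

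Since $x \in \overline{A}$ by hypothesis, it follows that $x \in \overline{B}$. Thus $B$ is the required countable subset of $A$, witnessing countable tightness at $x$. As $x$ was arbitrary, $X$ has countable tightness.

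There is essentially no obstacle here: the only subtle point is remembering that ``hereditarily separable'' applies to the subspace $A$ rather than to a set containing $x$, and that density in the subspace topology of $A$ translates to $A \subseteq \overline{B}$ in $X$, which is exactly what is needed to inherit the closure relation $x \in \overline{A} \subseteq \overline{B}$.
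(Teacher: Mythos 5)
Your proof is correct: applying hereditary separability to the subspace $A$ itself, noting that a countable $B\subseteq A$ dense in $A$ satisfies $A\subseteq\overline{B}$ and hence $\overline{A}\subseteq\overline{B}$, is exactly the standard argument. The paper states this lemma as well-known and gives no proof, so there is nothing to compare against; your write-up fills that gap correctly.
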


W.A.R. Weiss in a personal communication pointed out that with a little more work one can prove that a hereditarily separable space is indestructibly countably tight. I thank Dr. Weiss for the permission to include his argument here.
\begin{theorem}[W.A.R. Weiss]\label{weissth}
If $(X,\tau)$ is a hereditarily separable space, then TWO has a winning strategy in the game $\gone^{\omega_1}(\Omega_x,\Omega_x)$ for each $x\in X$.
\end{theorem}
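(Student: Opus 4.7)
My plan is to have TWO play a ``left-separating'' strategy that forces a dichotomy: either the sequence of moves already pins $x$ to the closure of a countable initial segment---in which case TWO has already won---or the completed sequence of $\omega_1$ moves forms an uncountable left-separated subspace of $X$, which hereditary separability will forbid.

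Concretely, at stage $\beta<\omega_1$, suppose ONE has just played $O_\beta\in\Omega_x$ and TWO has previously played $x_\alpha$ for $\alpha<\beta$; write $S_\beta=\{x_\alpha:\alpha<\beta\}$. TWO's strategy is: if $O_\beta\not\subseteq\overline{S_\beta}$, pick $x_\beta\in O_\beta\setminus\overline{S_\beta}$; otherwise pick $x_\beta\in O_\beta$ arbitrarily. To verify that this is winning, I would take an arbitrary play against it and argue by cases. If at some $\beta<\omega_1$ the ``otherwise'' clause fires, i.e.\ $O_\beta\subseteq\overline{S_\beta}$, then $x\in\overline{O_\beta}\subseteq\overline{S_\beta}$, so $x$ already lies in the closure of a countable subset of TWO's moves and TWO wins no matter how the remaining $\omega_1$ moves turn out. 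In the complementary case, TWO successfully chose $x_\beta\notin\overline{S_\beta}$ at every stage, so the $x_\beta$ are pairwise distinct, and moreover $x_\gamma\notin\overline{S_\beta}$ for every $\beta<\gamma<\omega_1$ (since $\overline{S_\beta}\subseteq\overline{S_\gamma}$). Thus the subspace $Y=\{x_\beta:\beta<\omega_1\}$, equipped with the natural well-ordering of type $\omega_1$, is left-separated.

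The main obstacle---and the only real content of the proof---will be to rule out this second case. Any countable $D\subseteq Y$ is bounded in the well-ordering of $Y$, say below some $x_\gamma$, so $D$ is contained in the initial segment $\{x_\alpha:\alpha<\gamma\}$, which is closed in $Y$ by left-separation; hence the closure of $D$ in $Y$ is a proper subset of $Y$, and $Y$ is a non-separable subspace of $X$, contradicting hereditary separability. This is the classical observation that a hereditarily separable space admits no uncountable left-separated subspace, specialized to exactly the sequence TWO is building; once it is invoked, the strategy is winning by construction.
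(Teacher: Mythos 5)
Your proposal is correct and follows essentially the same route as the paper: TWO repeatedly picks a point of ONE's set outside the closure of the previously chosen points, and the impossibility of an uncountable left-separated subspace of a hereditarily separable space forces this to fail at some countable stage, at which point $O_\beta\subseteq\overline{S_\beta}$ and hence $x\in\overline{S_\beta}$, so TWO has won. The only difference is cosmetic: you phrase the obstruction explicitly in terms of left-separated subspaces, while the paper states the same fact directly about sequences with $y_\beta\notin\overline{\{y_\alpha:\alpha<\beta\}}$.
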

{\flushleft{\bf Proof: }} TWO's strategy is to choose, in inning $\alpha<\omega_1$, an element $x_{\alpha}$ from the set $O_{\alpha}\in \Omega_x$ chosen by ONE in such a way that $x_{\alpha}$ is not in the closure of the set $\{x_{\beta}:\beta<\alpha\}$. To see that this is a winning strategy, note that a hereditarily dense space cannot contain a bijectively enumerated sequence $(y_{\gamma}:\gamma<\omega_1)$ such that for each $\beta$ we have $y_{\beta}\not\in \overline{\{y_{\alpha}:\alpha<\beta\}}$, since the set $\{y_{\beta}:\beta<\omega_1\}$ has a countable dense subset. Thus, in some inning $\alpha<\omega_1$ TWO is unable to choose an $x_{\alpha}$ of the specified kind. This happens because the set of points already selected by TWO is dense in ONE's set $O_{\alpha}$, and thus is already an element of $\Omega_x$.
$\Box$

We will show in Example 7 (in the last section of the paper) that there are hereditarily separable spaces where there is no countable ordinal $\alpha$ such that TWO has a winning strategy in the game $\gone^{\alpha}(\Omega_x,\Omega_x)$ at an element $x$. In certain special classes of hereditarily separable spaces one can show that there is a countable ordinal $\alpha$ such that TWO has a winning strategy in $\gone^{\alpha}(\Omega_x,\Omega_x)$ at some $x$. We explore this for {\sf HFD} spaces, and for hereditarily separable compact spaces. 

\cite{JuhaszHFD} contains a nice survey of HFD spaces. Note that HFD spaces are subspaces of $\,^{\lambda}2$ for appropriate uncountable cardinals $\lambda$. As noted in 2.7 of \cite{JuhaszHFD}, all HFD's are hereditarily separable.  Thus, by Theorem \ref{weissth}, each HFD is indestructibly countably tight. In the case of HFD's more information can be derived regarding existence of winning strategies of ONE and TWO. 

In \cite{BJ} Berner and Juhasz introduce a game denoted ${\sf G}_{\omega}^{ND}(X)$, which is played as follows on the topological space $X$: Players ONE and TWO play an inning per finite ordinal. In inning $n<\omega$ ONE selects a nonempty open subset $O_n$ of $X$, and TWO responds with a $t_n\in O_n$. A play
\[
  O_0,\, t_0,\, \cdots,\, O_n,\, t_n,\, \cdots
\]
is won by player ONE if the set $\{t_n:n<\omega\}$ is not discrete in $X$. Else, TWO wins. Let $\mathfrak{ND}$ denote the set $\{A\subseteq X: \, A \mbox{ not discrete}\}$, and let $\mathfrak{D}$ denote the set of all dense subsets of $X$. Using the techniques in Lemma 2 and Theorems 7 and 8 of \cite{COC6} one can show the following:
\begin{itemize}
  \item{ONE has a winning strategy in ${\sf G}^{ND}_{\omega}(X)$ if, and only if, TWO has a winning strategy in $\gone^{\omega}(\mathfrak{D},\mathfrak{ND})$.} 
  \item{TWO has a winning strategy in ${\sf G}^{ND}_{\omega}(X)$ if, and only if, ONE has a winning strategy in $\gone^{\omega}(\mathfrak{D},\mathfrak{ND})$.} 
\end{itemize}

\begin{lemma}\label{ndvstight} Let $X$ be a ${\sf T}_1$-space with no isolated points and assume that ONE has a winning strategy in the game $\gone^{\omega}(\mathfrak{D},\mathfrak{ND})$. Then for each $x\in X$ ONE has a winning strategy in the game $\gone^{\omega}(\Omega_x,\Omega_x)$.
\end{lemma}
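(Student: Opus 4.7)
The plan is to directly transform $\sigma$, the given winning strategy of ONE in $\gone^{\omega}(\mathfrak{D},\mathfrak{ND})$, into a winning strategy $\tau$ for ONE in $\gone^{\omega}(\Omega_x,\Omega_x)$, via the natural device of deleting the single point $x$ from each of $\sigma$'s moves.

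First I would record the key topological observation that since $X$ is $T_1$ and has no isolated points, the singleton $\{x\}$ is closed and nowhere dense in $X$. Hence for every $D\in\mathfrak{D}$, the set $D\setminus\{x\}$ is again dense in $X$, and because $x\in\overline{D\setminus\{x\}}\setminus (D\setminus\{x\})$, it lies in $\Omega_x$. This gives a natural correspondence $\mathfrak{D}\ni D\mapsto D\setminus\{x\}\in\Omega_x$ that respects ONE's legal move sets in the two games.

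Next, I define $\tau(y_0,\ldots,y_{n-1}):=\sigma(y_0,\ldots,y_{n-1})\setminus\{x\}$, which by the observation is a valid $\Omega_x$-move, so $\tau$ is a bona fide strategy for ONE in $\gone^{\omega}(\Omega_x,\Omega_x)$. Since $\tau(\vec{y})\subseteq\sigma(\vec{y})$, every $\tau$-play $(y_n)_{n<\omega}$ is also a legitimate $\sigma$-play, and by design satisfies $y_n\neq x$ for all $n$. The hypothesis that $\sigma$ is winning for ONE in $\gone^{\omega}(\mathfrak{D},\mathfrak{ND})$ then forces the selection $\{y_n:n<\omega\}$ to be discrete in $X$ in the sense of the Berner-Juhász game.

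The remaining and main step, which I expect to be the principal obstacle, is to conclude from this discreteness that $x\notin\overline{\{y_n:n<\omega\}}$, so that $\{y_n\}\notin\Omega_x$ and hence $\tau$ defeats this play. The verification uses that the discreteness condition of the Berner-Juhász game is in fact closed-discreteness of the selected set in $X$: once $\{y_n\}$ is closed and discrete, $\overline{\{y_n\}}=\{y_n\}$, and since $x\notin\{y_n\}$ by construction, $x$ is not in this closure. The $T_1$ and no-isolated-point hypotheses are essential, both to make the correspondence $D\mapsto D\setminus\{x\}$ land in $\Omega_x$ and to justify the closed-discreteness conclusion for TWO's selection. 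Once these topological points are in place, the transfer of strategies between the two games is formally routine.
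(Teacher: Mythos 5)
Your setup is fine: the observation that $D\setminus\{x\}\in\Omega_x$ for every dense $D$ (using ${\sf T}_1$ and the absence of isolated points) is correct, and the translation of $\sigma$-moves into $\Omega_x$-moves is the right general idea. The gap is exactly where you predicted it would be: the final step. In this paper $\mathfrak{ND}$ is the family of subsets that are \emph{not discrete}, where ``discrete'' means discrete as a subspace (relatively discrete); note that elsewhere the paper writes ``closed, discrete'' when closedness is intended (e.g.\ in the definition of countable extent), so you cannot read closed-discreteness into the winning condition of $\gone^{\omega}(\mathfrak{D},\mathfrak{ND})$. Consequently, knowing only that TWO's selection $S=\{y_n:n<\omega\}$ is relatively discrete and that $x\notin S$ does \emph{not} yield $x\notin\overline{S}$: the set $\{1/n:n\ge 1\}$ in $\reals$ is relatively discrete, omits $0$, yet has $0$ in its closure. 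Nothing in the hypothesis prevents TWO from producing such an $S$ against your strategy $\tau$, so the argument as written does not establish that $\tau$ is winning.

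The paper's proof avoids this by arranging that $x$ itself is one of the points of the discrete set. One first notes that one may assume $x\in\sigma(\emptyset)$, and then defines $\sigma_x(\emptyset)=\sigma(x)\setminus\{x\}$ and $\sigma_x(t_1,\ldots,t_n)=\sigma(x,t_1,\ldots,t_{n-1})\setminus\{x\}$; that is, the simulated $\sigma$-play has TWO picking $x$ in the very first inning. A $\sigma_x$-play with TWO's picks $t_0,t_1,\ldots$ then corresponds to a $\sigma$-play whose selection is $\{x\}\cup\{t_n:n<\omega\}$, and since $\sigma$ is winning this set is discrete; relative discreteness \emph{at the point $x$} now gives a neighborhood of $x$ meeting the set only in $x$, hence $x\notin\overline{\{t_n:n<\omega\}}$ and the selection is not in $\Omega_x$. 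So your proposal needs this one additional idea --- shifting the simulated play by one move so that $x$ is among TWO's picks --- to close the gap.
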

$\Proof$ Let $\sigma$ be ONE's winning strategy in $\gone^{\omega}(\mathfrak{D},\mathfrak{ND})$. Fix an $x\in X$. Note that we may assume that $\sigma(\emptyset)$ has $x$ as an element. Define a strategy $\sigma_x$ for ONE as follows:\\
$\sigma_x(\emptyset) = \sigma(x)\setminus\{x\}$, a dense subset of $X$ since $X$ has no isolated points. With $(t_1,\cdots,t_n)$ a finite sequence of points from $X$, define $\sigma_x(t_1,\cdots,t_n) = \sigma(x,t_1,\cdots,t_{n-1})\setminus\{x\}$ whenever the latter is defined, and else define this to be $X\setminus\{x\}$.

We claim that $\sigma_x$ is a winning strategy for ONE in $\gone^{\omega}(\Omega_x,\Omega_x)$. For consider any $\sigma_x$-play
\[
  O_0,\, t_0,\, \cdots,\, O_n,\, t_n,\,\cdots
\]
Then 
\[
  \sigma(\emptyset),\,x,\, \sigma(x),\, t_0,\, \cdots,\, \sigma(x,t_0,\cdots,t_{n-1}),\, t_n,\,\cdots
\]
is a $\sigma$-play of $\gone^{\omega}(\mathfrak{D},\mathfrak{ND})$, and thus the set $\{t_n:n<\omega\}\cup\{x\}$ is discrete. Since $x\not\in S = \{t_n:n<\omega\}$, it follows that $S$ is not a member of $\Omega_x$, and so ONE won the play. 
$\Box$

Thus we have
\begin{theorem}[CH]\label{onevstightness} There is a ${\sf T}_{3}$ space $X$ which has countable strong fan tightness and yet ONE has a winning strategy in the game $\gone^{\omega}(\Omega_x,\Omega_x)$ at each $x\in X$.
\end{theorem}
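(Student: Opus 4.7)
The plan is to construct, under {\sf CH}, a classical ${\sf HFD}$ $X\subseteq\,^{\omega_1}2$ (as in \cite{JuhaszHFD}) and show that $X$ is the desired example. The ${\sf T}_3$ property is inherited from the Tychonoff cube, and hereditary separability (noted in the excerpt) gives countable tightness at every point. The two nontrivial tasks are (a) countable strong fan tightness at each point, and (b) the existence of a winning strategy for ONE in $\gone^{\omega}(\Omega_x,\Omega_x)$ at each point.

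For (b) I would go through the Berner--Juh\'asz game and Lemma \ref{ndvstight}: it suffices to produce a winning strategy for TWO in ${\sf G}^{ND}_\omega(X)$. The strategy is coordinate-tracking. In inning $n$, TWO shrinks ONE's dense set $D_n$ to a countable dense $D_n'\subseteq D_n$ by hereditary separability, invokes the ${\sf HFD}$ property on the countable set $D_n'\cup\{t_0,\dots,t_{n-1}\}$ to obtain a countable ordinal $\alpha_n$ past which every finite pattern is realized in the tail, picks a fresh coordinate $\beta_n\in[\alpha_n,\omega_1)\setminus\{\beta_0,\dots,\beta_{n-1}\}$, and selects $t_n\in D_n'$ with $t_n(\beta_n)=1$ and $t_n(\beta_k)=0$ for every $k<n$. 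Arranging the $\alpha_n$'s to increase monotonically with $n$ keeps the earlier $\beta_k$'s above the relevant final-density threshold, so the prescribed pattern is realizable. Each $t_n$ is then isolated in $\{t_m:m<\omega\}$ by the basic open set cut out by its values on $\{\beta_0,\dots,\beta_n\}$, so the play is discrete and TWO wins. Invoking the ${\sf G}^{ND}_\omega$--$\gone^{\omega}(\mathfrak{D},\mathfrak{ND})$ equivalence stated in the excerpt and then Lemma \ref{ndvstight} produces ONE's winning strategy in $\gone^{\omega}(\Omega_x,\Omega_x)$ at each $x\in X$.

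For (a): given $(A_n:n<\omega)$ from $\Omega_x$, hereditary separability lets us replace each $A_n$ by a countable subset still accumulating at $x$, so that $A=\{x\}\cup\bigcup_n A_n$ is countable. The ${\sf HFD}$ property supplies $\alpha<\omega_1$ with $\pi_{[\alpha,\omega_1)}[A]$ dense in $^{[\alpha,\omega_1)}2$. Enumerate the finite subsets of $\alpha$ as $(F_n:n<\omega)$; for each $n$ choose $x_n\in A_n$ agreeing with $x$ on $F_n$ (using $x\in\overline{A_n}$) and also on a prescribed finite tail $H_n\subseteq[\alpha,\omega_1)$ (using the final density of $A_n$ inherited from the ${\sf HFD}$ condition). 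Pair the assignment $n\mapsto H_n$ so that every finite subset of $[\alpha,\omega_1)$ needed to cover a given nbhd of $x$ is realized as some $H_n$; then for every finite $G\subseteq\omega_1$ the pieces $G\cap\alpha$ and $G\setminus\alpha$ are both captured by a common index $n$, giving $x_n\upharpoonright G=x\upharpoonright G$ and hence $x\in\overline{\{x_n:n<\omega\}}$.

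The main obstacle is (a): the family of finite subsets of $[\alpha,\omega_1)$ has cardinality $\aleph_1$, so a naive diagonalization cannot succeed, and the ${\sf HFD}$ final-density must be exploited to pair each finite tail condition with enough indices $n$ to guarantee a matching $x_n$. Making this pairing precise may require adapting the ${\sf CH}$ recursion defining $X$ to reserve enough slack at each step both for the selection witnessing strong fan tightness and for TWO's coordinate-tracking strategy; ensuring that these two constraints are compatible within a single construction is the delicate technical point of the proof.
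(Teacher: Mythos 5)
Your skeleton (an HFD under {\sf CH}, routed through the Berner--Juh\'asz game and Lemma \ref{ndvstight}) matches the paper's, but both of your two ``nontrivial tasks'' have genuine gaps, and the first is the serious one. What Lemma \ref{ndvstight} requires is that \emph{ONE} has a winning strategy in $\gone^{\omega}(\mathfrak{D},\mathfrak{ND})$, i.e.\ that the player choosing the dense sets can \emph{force} the selected points to be discrete no matter which points are picked; equivalently, that TWO wins ${\sf G}^{ND}_{\omega}(X)$, where the opponent plays arbitrary nonempty open sets. Your coordinate-tracking strategy proves neither. It is a strategy for the point-picker responding to \emph{dense} sets, which only shows that the point-picker \emph{can} achieve discreteness against dense moves --- the quantifier is on the wrong side for $\gone^{\omega}(\mathfrak{D},\mathfrak{ND})$, and it is too weak for ${\sf G}^{ND}_{\omega}$, where ONE may play the nonempty open set $[\{(\beta_0,1)\}]\cap X$ in inning $n$ and thereby make the requirement $t_n(\beta_0)=0$ unsatisfiable. (There is also the threshold problem you half-acknowledge: the final-density ordinal $\alpha_n$ of $D_n'$ is not known to TWO when $\beta_0,\dots,\beta_{n-1}$ were committed, and monotonicity of the $\alpha_n$ pushes the thresholds \emph{above} the old coordinates, not below.) This is not a repairable detail: being an HFD does not by itself yield a winning strategy for ONE in $\gone^{\omega}(\mathfrak{D},\mathfrak{ND})$. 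The paper does not use a ``classical'' HFD; it invokes Theorem 3.1 of Berner--Juh\'asz, a \emph{special} {\sf CH} construction of an HFD $\mathbb{J}$ with no isolated points built precisely so that ONE wins $\gone^{\omega}(\mathfrak{D},\mathfrak{ND})$. Without citing or reconstructing that specific construction, part (b) of your argument does not get off the ground.

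For part (a) you concede the argument is incomplete, and the obstacle you identify (uncountably many finite tail conditions) is real; your proposed fix of ``adapting the {\sf CH} recursion'' is misdirected, since countable strong fan tightness is a {\sf ZFC} consequence of being an HFD. The paper obtains it from Theorem \ref{twowinshfdtightness}: TWO has a winning strategy in $\gone^{\omega^2}(\Omega_x,\Omega_x)$ on any HFD, built in $\omega$ blocks of $\omega$ innings using the closed unbounded sets $D_S\subseteq[\lambda]^{\aleph_0}$ to enlarge the relevant countable coordinate set $C_n$ at the end of each block; the union $C=\bigcup_n C_n$ then absorbs the domain of any basic neighborhood of $x$, which is how the ``$\aleph_1$ many tail conditions'' problem is circumvented. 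You would need this (or an equivalent) lemma; a single $\omega$-length diagonalization of the kind you sketch cannot produce it.
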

$\Proof$ In Theorem 3.1 of \cite{BJ} Berner and Juhasz construct, using {\sf CH}, an HFD ${\mathbb J}\subseteq \,^{\omega_1}2$ with no isolated points such that ONE has a winning strategy in $\gone^{\omega}(\mathfrak{D},\mathfrak{ND})$. The following Theorem \ref{twowinshfdtightness} implies that each $HFD$ has countable strong fan tightness. Apply this information and Lemma \ref{ndvstight} to the HFD ${\mathbb J}$.
$\Box$

At first glance Theorem \ref{onevstightness} might suggest that there is an HFD with destructible countable tightness. We now modify Theorem 2.7 of \cite{BJ} to show that this is in fact not the case. For the convenience of the reader we recall: 

If $X\subseteq \,^{\lambda}2$ is an HFD define for S a countably infinite subset of X the set $D_S$ to be the set of all countable $A \subseteq \lambda$ such that for all $\sigma \in {\sf Fin}(A, 2)$, if $\lbrack \sigma\rbrack \cap S$ is infinite then for each $\tau \in {\sf Fin}(\lambda \setminus A, 2)$ the set $\lbrack \sigma\rbrack \cap \lbrack\tau \rbrack \cap S$ is nonempty. It is known (see \cite{JuhaszHFD}, 2.12) that:

\begin{lemma} For each countably infinite subset $S$ of an HFD $X\subset \,^{\lambda}2$ the set $D_S\subseteq \lbrack \lambda\rbrack^{\aleph_0}$ is closed and unbounded.
\end{lemma}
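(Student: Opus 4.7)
The plan is to verify closedness and unboundedness separately. The essential tool, a standard characterization of being an HFD, is that every countably infinite $Y \subseteq X$ admits a countable set $A(Y) \subseteq \lambda$ such that $[\rho] \cap Y \neq \emptyset$ for every $\rho \in {\sf Fin}(\lambda \setminus A(Y), 2)$. I will use this property as a black box.

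For closedness, given an $\subseteq$-increasing $\omega$-chain $(A_n)_{n<\omega}$ from $D_S$ with union $A$, I would pick any $\sigma \in {\sf Fin}(A, 2)$ with $[\sigma] \cap S$ infinite and any $\tau \in {\sf Fin}(\lambda \setminus A, 2)$, and then observe that the finiteness of ${\sf dom}(\sigma)$ forces $\sigma \in {\sf Fin}(A_n, 2)$ for some $n$. Since $\lambda \setminus A \subseteq \lambda \setminus A_n$, the test $\tau$ is admissible at stage $n$, and $A_n \in D_S$ delivers a point of $[\sigma] \cap [\tau] \cap S$. So $A \in D_S$.

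For unboundedness, given a countable $B \subseteq \lambda$, I would build $A \supseteq B$ by an $\omega$-stage closure. Set $A_0 = B$, and at stage $n+1$ adjoin to $A_n$ the countable set $A([\sigma] \cap S)$ for each $\sigma \in {\sf Fin}(A_n, 2)$ whose slice $[\sigma] \cap S$ is infinite. Countability is preserved at each stage because ${\sf Fin}(A_n, 2)$ is countable. Put $A = \bigcup_n A_n$. To confirm $A \in D_S$, note that every $\sigma \in {\sf Fin}(A, 2)$ must already lie in some ${\sf Fin}(A_n, 2)$, hence $A([\sigma] \cap S) \subseteq A_{n+1} \subseteq A$; then any $\tau \in {\sf Fin}(\lambda \setminus A, 2)$ lies in ${\sf Fin}(\lambda \setminus A([\sigma] \cap S), 2)$, and the defining property of $A([\sigma] \cap S)$ produces a point of $[\sigma] \cap [\tau] \cap S$.

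The one place that could look problematic is that each round of closure creates fresh $\sigma$'s over the enlarged coordinate set, tempting a transfinite iteration. The finite-domain stabilization argument used twice above is exactly what lets the $\omega$-iteration close: every finite $\sigma$ appears at some finite stage and is then accounted for at the next. No cardinal arithmetic beyond countable unions of countable sets is required.
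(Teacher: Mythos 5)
First, note that the paper does not prove this lemma at all: it is quoted from Juh\'asz's survey (\cite{JuhaszHFD}, 2.12), so you are supplying an argument where the text gives only a citation. Your skeleton is the right one and is essentially the standard proof: closedness follows from the finite supports of the conditions $\sigma$ and $\tau$ (and the same finite-support argument handles increasing unions of any countable length, which is what closedness in $\lbrack\lambda\rbrack^{\aleph_0}$ formally requires, not only $\omega$-chains), while unboundedness follows from an $\omega$-stage closing-off in which each round treats only the countably many $\sigma\in{\sf Fin}(A_n,2)$ whose slice $\lbrack\sigma\rbrack\cap S$ is infinite. The verification that the closure point lands in $D_S$ is correct as written.

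The one genuine issue is your ``black box.'' The definition of an HFD says that every countably infinite $Y\subseteq X$ is \emph{finally} dense: there is an ordinal $\alpha<\lambda$ such that $\lbrack\rho\rbrack\cap Y\neq\emptyset$ for every $\rho\in{\sf Fin}(\lambda\setminus\alpha,2)$. When $\lambda=\omega_1$ the witness $\alpha$ is itself a countable subset of $\lambda$, so your black box is literally the definition and your proof is complete; this also covers the paper's actual uses of the lemma, since the Berner--Juh\'asz HFD lives in ${}^{\omega_1}2$. For $\lambda>\omega_1$, however, the witness is an initial segment that may be uncountable, and the claim that it can always be replaced by a \emph{countable} set $A(Y)$ is not automatic: it is precisely the $\sigma=\emptyset$ instance of the unboundedness being proved, since any $A\in D_S$ must satisfy $\lbrack\tau\rbrack\cap S\neq\emptyset$ for all $\tau\in{\sf Fin}(\lambda\setminus A,2)$. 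So in the general case you have assumed a nontrivial special case of the conclusion rather than derived it. You should either deduce the countable-witness form of final density from the initial-segment form, or state explicitly that your proof is for $\lambda=\omega_1$.
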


\begin{theorem}\label{twowinshfdtightness}
If $X\subset \,^{\lambda}2$ is an HFD then for each $x\in X$ TWO has a winning strategy in $\gone^{\omega^2}(\Omega_x,\Omega_x)$.
\end{theorem}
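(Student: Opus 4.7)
The plan is to exhibit a winning strategy for TWO in $\gone^{\omega^2}(\Omega_x,\Omega_x)$ by splitting the $\omega^2$ innings into $\omega$ blocks of length $\omega$ and, inside each block, interpolating between supports $A_n\subseteq\lambda$ and basic neighborhoods of $x$ in $^\lambda 2$ via the club lemma for $D_S$ stated just above. Write each $\alpha<\omega^2$ as $\alpha=\omega\cdot n+k$ and call innings $\omega\cdot n,\omega\cdot n+1,\ldots$ \emph{block $n$}. TWO maintains a $\subseteq$-increasing chain $A_0\subseteq A_1\subseteq\cdots$ of countable subsets of $\lambda$; write $S_n$ for TWO's moves in block $n$, $S_{\le n}=\bigcup_{m\le n}S_m$, $S=\bigcup_n S_n$, and $A=\bigcup_n A_n$. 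For $F\in[\lambda]^{<\omega}$ write $[x|F]=\{y\in{}^\lambda 2:y(\beta)=x(\beta)\text{ for all }\beta\in F\}$.

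The strategy runs as follows. At the start of block $n$ the history has fixed $S_{\le n-1}$, and since $D_{S_{\le n-1}}$ is unbounded in $[\lambda]^{\aleph_0}$, TWO chooses a countable $A_n\supseteq A_{n-1}$ with $A_n\in D_{S_{\le n-1}}$ (use $A_{-1}=S_{\le -1}=\emptyset$). TWO then fixes any enumeration $(F^n_k:k<\omega)$ of $[A_n]^{<\omega}$. When ONE plays $O^n_k\in\Omega_x$ in inning $\omega\cdot n+k$, TWO answers with some $x^n_k\in O^n_k\cap[x|F^n_k]$ distinct from all of TWO's earlier moves. Such a choice exists because $X\subseteq{}^\lambda 2$ is ${\sf T}_1$ and $x\in\overline{O^n_k}\setminus O^n_k$, so every neighborhood of $x$ meets $O^n_k$ in an infinite set.

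To see that this wins, fix any basic neighborhood $[x|G]$ with $G\in[\lambda]^{<\omega}$ and decompose $G=G_1\sqcup G_2$ where $G_1=G\cap A$ and $G_2=G\setminus A$. Pick $m$ with $G_1\subseteq A_m$; in every block $n\ge m$, $G_1$ appears as some $F^n_k$, so TWO makes a fresh pick from $[x|G_1]$ in that block. Choose $N\ge m$ large enough that $[x|G_1]\cap S_{\le N}$ is infinite. Now $A_{N+1}\in D_{S_{\le N}}$, $G_1\subseteq A_m\subseteq A_{N+1}$, and $G_2\cap A_{N+1}\subseteq G_2\cap A=\emptyset$. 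Applying the defining property of $D_{S_{\le N}}$ to $\sigma=x|G_1\in{\sf Fin}(A_{N+1},2)$ and $\tau=x|G_2\in{\sf Fin}(\lambda\setminus A_{N+1},2)$ produces a point of $S_{\le N}\subseteq S$ lying in $[x|G_1]\cap[x|G_2]=[x|G]$. Hence $x\in\overline S$, so $S\in\Omega_x$ and TWO wins.

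The main subtlety is that TWO never verifies $A\in D_S$ directly; TWO only secures $A_n\in D_{S_{\le n-1}}$ for each $n$. The argument compensates with the timing above: for each particular $G$, waiting until a block $N+1$ at which both $G_1\subseteq A_{N+1}$ and $[x|G_1]\cap S_{\le N}$ has already become infinite lets us invoke the previously committed witness $A_{N+1}\in D_{S_{\le N}}$, and the complementary coordinates $G_2$ automatically lie outside $A_{N+1}$ because they lie outside $A$. Arranging the in-block bookkeeping so that this works for every finite $G\subseteq\lambda$ is the essential combinatorial step, and it is exactly what the length $\omega\cdot\omega$ buys over $\omega$ alone.
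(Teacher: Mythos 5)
Your architecture is the same as the paper's: split $\omega^2$ into $\omega$ blocks of length $\omega$, maintain an increasing chain of countable supports $A_n$ drawn from the clubs $D_{S_{\le n-1}}$, realize each basic condition over the current support within the current block, and finish by applying the defining property of $D_{S_{\le N}}$ to the decomposition $G=(G\cap A)\sqcup(G\setminus A)$. The endgame verification is correct. However, there is a genuine gap at the step where you assert that TWO can always choose $x^n_k\in O^n_k\cap[x|F^n_k]$ \emph{distinct from all earlier moves}. Your justification only shows that this intersection is infinite; but from block $1$ onward TWO has already made countably infinitely many moves, and an infinite set need not meet the complement of a countably infinite set $E$ of prior moves. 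Indeed, if $O^n_k\cap[x|F^n_k]\subseteq E$ then $x\in\overline{E}$, so the obstruction occurs exactly when $E\in\Omega_x$ --- and ONE may legally play $O^n_k=E$ in that case. So as written the strategy is not well-defined. This is precisely where the paper inserts its case split: if the set of moves accumulated so far is already in $\Omega_x$, TWO has won and plays arbitrarily thereafter; otherwise one shrinks the neighborhood to one disjoint from the closure of all completed blocks' moves, leaving only the finitely many within-block moves to avoid. Your proof needs this (or an equivalent) to go through.

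A second, related hole: you justify the existence of a finite $N\ge m$ with $[x|G_1]\cap S_{\le N}$ infinite by noting that TWO makes ``a fresh pick from $[x|G_1]$'' in each block $n\ge m$. One pick per block gives only $N-m+1$ such points by stage $N$, so no finite $N$ is ever certified infinite by that count. The claim is still true, but for a different reason: infinitely many of the $F^m_k$ enumerated in block $m$ are supersets of $G_1$, so $[x|F^m_k]\subseteq[x|G_1]$ for infinitely many $k$ and already $[x|G_1]\cap S_m$ is infinite (the paper secures the same effect by listing each condition infinitely often within a single block). Both defects are repairable without changing your architecture, but as written the well-definedness of the strategy and the infiniteness hypothesis needed to invoke the definition of $D_{S_{\le N}}$ are unjustified, and the first one is exactly the subtlety that the paper's $A_n\in\Omega_x$ case analysis exists to handle.
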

$\Proof$ Fix an $x\in X$. We may assume that $x$ is not an isolated point of $X$. 

Choose $B_1 \subset \lambda$ countably infinite with $\omega\subseteq B_1$ and $\{\sigma\in {\sf Fin}(B_1, 2): x\in\lbrack \sigma\rbrack\}$  infinite. Let $(\sigma^1_n: 0 < n < \omega)$ enumerate $\{\sigma\in {\sf Fin}(B_1, 2): x\in\lbrack \sigma\rbrack\}$ in such a way that each element is listed infinitely many times. 

We now describe a strategy $\Phi$ for player TWO: During the first $\omega$ innings, when ONE plays in inning $i$ a set $O_i\in\Omega_x$, TWO plays 
\[
  \Phi(O_1,\cdots,O_i)\in \lbrack \sigma^1_i\rbrack \cap O_i\setminus \{\Phi(O_1,\cdots,O_j):j<i\}.
\]

Put $A_1 = \{\Phi(O_1,\cdots,O_k) : 0 < k < \omega\}$. Then $A_1$ is an infinite subset of $X$. If $A_1\in\Omega_x$, then TWO plays the remaining innings arbitrarily, obeying the rules of the game. Thus, assume that $A_1\not\in\Omega_x$. Towards defining $\Phi$ for the next $\omega$ innings of the game, choose $C_1 \in D_{A_1}$ with $B_1$ a proper subset of $C_1$ and let $(\sigma^2_n : 0 < n <\omega)$ enumerate $\{\sigma\in {\sf Fin}(C_1, 2): x\in\lbrack \sigma\rbrack\}$ such that each element is listed infinitely often. Now when ONE plays $O_{\omega+i},\, i<\omega$ TWO plays 
\[
  \Phi(O_{\nu}:\nu\le \omega+i)\in \lbrack \sigma^2_i\rbrack \cap O_{\omega+i}\setminus \{\Phi(O_{\gamma}:\gamma<j):j\le\omega+i\}
\]
which is possible as $A_1\not\in \Omega_x$ and $x$ is not an isolated point of $X$. 

Put $A_2 = A_1 \cup \{\Phi(O_{\gamma}:\gamma<j):j <\omega\cdot 2\}$. If $A_2$ is a member of $\Omega_x$, then TWO plays arbitrary points during the rest of the game, following the rules of the game. Thus, assume that $A_2$ is not a member of $\Omega_x$. Choose $C_2 \in D_{A_1} \cap D_{A_2}$ with $C_1$ a proper subset of $C_2$ and let $(\sigma^3_n : 0 < n < \omega)$ enumerate $\{\sigma\in {\sf Fin}(C_2, 2): x\in\lbrack \sigma\rbrack\}$ such that each element is listed infinitely often.
Now when ONE plays $O_{\omega\cdot 2+i},\, i<\omega$ TWO plays 
\[
  \Phi(O_{\nu}:\nu\le \omega\cdot 2+i)\in \lbrack \sigma^2_i\rbrack \cap O_{\omega+i}\setminus \{\Phi(O_{\gamma}:\gamma<j):j\le\omega\cdot 2+i\}
\]
which is possible as $A_2\not\in \Omega_x$ and $x$ is not an isolated point. 

 Then put $A_3 = A_2 \cup \{\Phi(O_{\nu}:\nu\le \omega\cdot 2+i): i <\omega\}$. Choose $C_3 \in D_{A_1} \cap D_{A_2} \cap D_{A_3}$ with $C_2$ a proper subset of $C_3$, and so on. TWO continues playing like this until an $n$ is reached at which $A_n\in\Omega_x$, and then plays arbitrary points permitted by the rules of the game.

Suppose that for each $n$ we have $A_n\not\in\Omega_x$. Consider $A = \bigcup_{0<k<\omega} A_k$. The set A came about through TWO using the strategy $\Phi$ above over $\omega^2$ innings. 
{\flushleft{\bf Claim: }} $x\in\overline{A}$.\\
To see that $A$ meets each neighborhood of $x$, let a $\sigma \in {\sf Fin}(\lambda,2)$ be given with $x\in\lbrack \sigma\rbrack$. Also, put $C = \cup\{C_n:0<n<\omega\}$. Since each $D_{A_j}$ is closed and unbounded, $C$ is an infinite member of $\cap_{0<n<\omega}D_{A_n}$. Since the containments $C_n\subset C_{n+1}$ are proper and since ${\sf dom}(\sigma)$ is finite, choose the least $k<\omega$ with ${\sf dom}(\sigma)\cap C_k = {\sf dom}(\sigma)\cap C$. Put $\nu = \sigma\lceil_{C_k}$. Now for each $m\ge k$,  $\nu$ was listed infinitely often among $\{\sigma\in {\sf Fin}(C_m, 2): x\in\lbrack \sigma\rbrack\}$, implying that $\lbrack \nu\rbrack\cap A_m$ is infinite and in particular nonempty. This implies that $\lbrack\sigma\rbrack \cap A\neq\emptyset$. It follows that $\Phi$ is a winning strategy for TWO.
$\Box$

\begin{corollary}\label{HFDindestr} Each HFD is indestructibly countably tight.
\end{corollary}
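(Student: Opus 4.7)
The plan is to combine Theorem \ref{twowinshfdtightness} with the equivalence $(6)\Leftrightarrow(1)$ in Theorem \ref{indestrtightchar}. Given an HFD $X\subseteq\,^{\lambda}2$ and a point $x\in X$, Theorem \ref{twowinshfdtightness} supplies a strategy $\Phi$ with which TWO wins $\gone^{\omega^2}(\Omega_x,\Omega_x)$ at $x$. To apply Theorem \ref{indestrtightchar} I need the stronger statement that ONE has no winning strategy in $\gone^{\omega_1}(\Omega_x,\Omega_x)$ at $x$, so my first task is to promote $\Phi$ to a winning strategy for TWO in the game of length $\omega_1$.

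The promotion is essentially free. Define $\widehat{\Phi}$ by letting TWO use $\Phi$ during the first $\omega^2$ innings, and play any legal move during innings in $[\omega^2,\omega_1)$. If $(O_\beta,x_\beta)_{\beta<\omega_1}$ is a $\widehat{\Phi}$-play, then by the choice of $\Phi$ the set $S=\{x_\beta:\beta<\omega^2\}$ lies in $\Omega_x$, so $x\in\overline{S}$. The whole set of TWO's moves $S'=\{x_\beta:\beta<\omega_1\}$ contains $S$, and since every move $x_\beta$ was drawn from some $O_\beta\in\Omega_x$ (which by definition excludes $x$), we have $x\notin S'$. Hence $x\in\overline{S'}\setminus S'$, i.e.\ $S'\in\Omega_x$, so TWO wins the play. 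In particular ONE has no winning strategy in $\gone^{\omega_1}(\Omega_x,\Omega_x)$.

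Before invoking Theorem \ref{indestrtightchar}, I should record the hypothesis that $X$ is countably tight at $x$: this is immediate since HFDs are hereditarily separable (as noted following Theorem \ref{weissth}) and Lemma \ref{HSistight} then gives countable tightness at every point. Now equivalence $(6)\Leftrightarrow(1)$ of Theorem \ref{indestrtightchar} yields that $X$ is indestructibly countably tight at $x$. Since $x\in X$ was arbitrary, $X$ is indestructibly countably tight.

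There is no real obstacle; the only thing worth being careful about is confirming that $\Omega_x$ is closed under supersets that avoid $x$, which is what makes the ``pad the play out to length $\omega_1$'' trick legitimate, and checking that $\Phi$ itself never causes TWO to play $x$ (guaranteed by the definition of $\Omega_x$). Everything else is a direct appeal to the two previously proved theorems.
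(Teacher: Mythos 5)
Your proposal is correct and follows the paper's own route: the paper's proof is literally ``apply Theorems \ref{indestrtightchar} and \ref{twowinshfdtightness},'' and you have simply spelled out the implicit padding step (a winning strategy for TWO in $\gone^{\omega^2}(\Omega_x,\Omega_x)$ extends to one in $\gone^{\omega_1}(\Omega_x,\Omega_x)$ because $\Omega_x$ is closed under supersets avoiding $x$, and TWO's moves never equal $x$) together with the hypothesis check that HFDs are countably tight. Nothing to correct.
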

$\Proof$ Apply Theorems \ref{indestrtightchar} and \ref{twowinshfdtightness}. $\Box$

In section 3 \cite{Gamelength} we defined for separable metric spaces $X$ the ordinal function ${\sf tp}_{sf}(X)$ as 
\[
  {\sf tp}_{sf}(X) = \min\{\alpha: \mbox{ TWO has a winning strategy in }\gone^{\alpha}(\Omega_{\mathbf 0},\Omega_{\mathbf 0}) \mbox{ on }{\sf C}_p(X).\}
\]
In the interest of greater generality and at the risk of creating some confusion, we now re-define these ordinal functions as follows: Let $X$ be a topological space and let $x\in X$ be given:
\[
  {\sf tp}_{sf}(X,x) = \min\{\alpha: \mbox{ TWO has a winning strategy in }\gone^{\alpha}(\Omega_x,\Omega_x)\}
\]
As we saw above, when $X$ is an HFD, then ${\sf tp}_{sf}(X,x) = \omega^2$. One might ask if in general each countable ordinal can occur as the ordinal ${\sf tp}_{sf}(X,x)$ at some point $x$ of some space $X$.  First we treat the case of ${\sf tp}_{sf}(X,x) = \omega$:

\begin{theorem}\label{twowinscsf}
Let $(X,\tau)$ be a separable ${\sf T}_3$-space, and let $x$ be an element of $X$. The following are equivalent:
\begin{enumerate}
\item{There is a countable neighborhood basis at $x$ (\emph{i.e.}, $\chi(X,x)=\aleph_0$).}
\item{TWO has a winning strategy in the game $\gone^{\omega}(\Omega_x,\Omega_x)$.}
\end{enumerate}
\end{theorem}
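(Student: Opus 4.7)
The plan is to handle the two implications separately, with $(1)\Rightarrow(2)$ admitting a direct construction and $(2)\Rightarrow(1)$ requiring a diagonalization that leans on separability and $T_3$.

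For $(1)\Rightarrow(2)$, I would fix a decreasing countable neighborhood base $\{V_n:n<\omega\}$ at $x$ and define TWO's strategy by: in inning $n$, upon ONE's move $O_n\in\Omega_x$, TWO plays any point $t_n\in O_n\cap V_n$. This intersection is nonempty because $x\in\overline{O_n}$ and $V_n$ is an open neighborhood of $x$. The resulting set $\{t_n:n<\omega\}$ lies in $\Omega_x$: each $t_n\neq x$ (since $t_n\in O_n$ and $x\notin O_n$); and for any open neighborhood $U$ of $x$ there is $N$ with $V_N\subseteq U$, so $t_n\in V_n\subseteq V_N\subseteq U$ for all $n\geq N$, giving $x\in\overline{\{t_n\}}$. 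Hence TWO wins.

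For $(2)\Rightarrow(1)$, assume TWO has a winning strategy $\sigma$; I argue by contraposition, assuming $\chi(X,x)>\aleph_0$. We may assume $x$ is non-isolated, since otherwise $\{x\}$ is already a basic neighborhood. Use separability to fix a countable dense $D\subseteq X\setminus\{x\}$. Because $D$ is dense and $x\notin D$, every set of the form $D\cap W$ for an open neighborhood $W$ of $x$ lies in $\Omega_x$; and by $T_3$ we can separate $x$ from any finite subset of $D$ by an open neighborhood. The plan is to build an $\omega$-play against $\sigma$ that is lost by TWO, contradicting $\sigma$ being winning.

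The construction is recursive. Given open neighborhoods $W_0\supseteq\cdots\supseteq W_{n-1}$ of $x$ and induced TWO-responses $t_i=\sigma(D\cap W_0,\ldots,D\cap W_i)$, use $T_3$ to choose $W_n\subseteq W_{n-1}$ with $\overline{W_n}\cap\{t_0,\ldots,t_{n-1}\}=\emptyset$, and let ONE play $O_n=D\cap W_n$. The countable family $\{W_n:n<\omega\}$ is a candidate base at $x$; by the assumption $\chi(X,x)>\aleph_0$ it fails to be one, so some open $V\ni x$ satisfies $W_n\not\subseteq V$ for all $n$.

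The main obstacle is that $V$ is only produced after the play is complete, whereas a contradiction requires that already during the play ONE forced $t_n\notin V$ for every $n$. This would be resolved by enriching the recursion: enumerate a sufficiently ample countable ``test'' family of open neighborhoods of $x$ (assembled using the countably many finite subsets of $D$ together with $T_3$-separation), and at stage $n$ schedule that $W_n$, or a further refinement of $O_n$ inside $D\cap W_n$ still in $\Omega_x$, also witnesses the failure of the $n$-th test neighborhood to contain any later $W_m$. Because $D^{<\omega}$ is countable, this scheduling is feasible; the winning property of $\sigma$ then forces $x\in\overline{\{t_n\}}$, yet the diagonal arrangement guarantees an open neighborhood of $x$ disjoint from $\{t_n\}$, the desired contradiction. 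This diagonal bookkeeping is the technical heart of the proof.
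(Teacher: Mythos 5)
Your direction $(1)\Rightarrow(2)$ is correct (the paper omits it as immediate): TWO plays into a decreasing countable base and the responses converge to $x$. The problem is in $(2)\Rightarrow(1)$, where the ``diagonal bookkeeping'' you defer to the end is not a technical detail but the entire content of the proof, and as sketched it cannot work. The obstruction you yourself identify is fatal to any single linear play: every legal move of ONE is a set with $x$ in its closure, hence meets \emph{every} open neighborhood $V$ of $x$, so ONE has no way to steer TWO's responses out of a prescribed $V$ --- TWO may simply always answer inside $V$. Moreover the witness $V$ to the failure of your countable family $\{W_n\}$ to be a base only exists after the whole family is assembled, and your proposed countable ``test family'' built from finite subsets of $D$ need not contain (or even approximate) the particular $V$ that is needed; since $\chi(X,x)>\aleph_0$, no countable family of neighborhoods suffices to test against. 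Note also that in your recursion $t_n\in W_n$ with the $W_n$ decreasing, so $\{t_n:n<\omega\}$ is eventually inside each $W_n$; if anything this pushes the responses \emph{toward} $x$ rather than away from it.

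The missing idea, which is how the paper proceeds, is a structural claim about the strategy itself: for each finite position $(A_1,\dots,A_n)$ there is an open neighborhood $U$ of $x$ such that \emph{every} point $z\in U\setminus\{x\}$ is realized as $\sigma(A_1,\dots,A_n,D)$ for some legal move $D\in\Omega_x$ (otherwise the ``unrealizable'' points chosen from each neighborhood form a set $E\in\Omega_x$ with $\sigma(A_1,\dots,A_n,E)\in E$, a contradiction). One then builds a \emph{tree} of plays indexed by $^{<\omega}\omega$: at each node $\nu$ one has such a neighborhood $U_{\nu}$, and by separability a countable dense subset of $U_{\nu}\setminus\{x\}$ each of whose points is a possible next response of TWO. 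The countable family $\{U_{\nu}\}$ is the candidate base. If it fails to be a base, ${\sf T}_3$ gives an open $V\ni x$ with $U_{\nu}\setminus\overline{V}\neq\emptyset$ for all $\nu$; since the realizable responses are dense in each $U_{\nu}\setminus\{x\}$, one can select \emph{a posteriori} a branch of the tree along which every response of TWO lies outside $V$, producing a play lost by TWO. It is this branching plus after-the-fact branch selection that replaces the impossible one-pass diagonalization.
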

{\flushleft{\bf Proof:}} 
We prove $(2)\Rightarrow (1)$. Let $\sigma$ be a strategy forTWO in the game $\gone^{\omega}(\Omega_x,\Omega_x)$.
{\flushleft{Claim 1: }} For each finite sequence $(A_1,\cdots,A_n)$ of elements of $\Omega_x$ there is an open neighborhood $U$ of $x$ such that for each $z\in U\setminus\{x\}$ there is a $D\in\Omega_x$ for which $z = \sigma(A_1,\cdots,A_n,D)$.

For suppose the contrary, and let $A_1,\cdots,A_n$ be a sequence of elements of $\Omega_x$ witnessing the failure of the Claim. Then for each neighborhood $U$ of $x$ choose a $z_U\in U\setminus\{x\}$ such that for any element $D$ of $\Omega_x$, we have $z\neq \sigma(A_1,\cdots,A_n,D)$. But then $E = \{z_U:U \mbox{ a neighborhood of }x\}$ is an element of $\Omega_x$, and $\sigma(A_1,\cdots,A_n,E)\in E$, contradicting the choice of elements of $E$.

With Claim 1 established we now proceed to define a countable set of neighborhoods of $x$: To begin, choose by Claim 1 a neighborhood $U_{\emptyset}$ of $x$ such that for each $z\in U_{\emptyset}\setminus \{x\}$ there is an element $A_z\in \Omega_x$ such that $z = \sigma(A_z)$. Since $(X,\tau)$ is separable, let $(z_{n}:n<\omega)$ enumerate a countable dense subset of $U_{\emptyset}$, and for each $n$ let $B_{n}\in\Omega_x$ denote $A_{z_{n}}$.

Next, using Claim 1, choose for each $n$ a neighborhood $U_{n}$ of $x$ such that for each $z\in U_{n}\setminus \{x\}$ there is an $A_z\in \Omega_x$ for which $z = \sigma(B_{n},A_z)$. Then as $(X,\tau)$ is separable, choose a countable dense subset $(z_{n,m}:m<\omega)$ of $U_{n}\setminus\{x\}$, and for each $(n,m)$ let $B_{n,m}$ denote $A_{z_{n,m}}$. In general, assume that for each finite sequence $\nu$ of length at most $k$ of elements of $\omega$ we have chosen $z_{\nu\frown n}$, $B_{\nu\frown n}\in \Omega_x$, and neighborhood $U_{\nu}$ of $x$ such that 
\begin{itemize}
\item[A ]{We have $\{z_{\nu\frown n}: n<\omega\}$ a dense subset of $U_{\nu}\setminus\{x\}$;}
\item[B ]{For each $n$, $x_{\nu\frown n} = \sigma(B_{\nu(0)},\, B_{\nu(0),\nu(1)},\, \cdots, B_{\nu},\, B_{\nu\frown n})$.}
\end{itemize}

For each such $\nu$ and $n$ use Claim 1 to select an open neighborhood $U_{\nu\frown n}$ of $x$ such that for each $z\in U_{\nu\frown n}\setminus \{x\}$ there is an $A_z$ such that $z = \sigma(B_{\nu(0)},\, \cdots, B_{\nu\frown n}, A_{z})$.  Then as $(X,\tau)$ is separable, choose a countable dense subset $\{z_{\nu\frown n \frown m}:m<\omega\}$ of $U_{\nu\frown n}\setminus\{x\}$, and for each $m$ let $B_{\nu\frown n\frown m}$ denote $A_{z_{\nu\frown n\frown m}}$. Thus by recursion there is for each finite sequence $\nu$ of elements of $\omega$ points $x_{\nu}$, neighborhoods $U_{\nu}$ of $x$, and elements $B_{\nu}$ of $\omega_x$ satisfying A and B above.

{\flushleft{Claim 2:}} If $\sigma$ is a winning strategy for TWO in $\gone^{\omega}(\Omega_x,\Omega_x)$, then $\{U_{\nu}:\, \nu \in\, ^{<\omega}\omega\}$ is a neighborhood basis of $x$.

For suppose not: Then as $(X,\tau)$ is a ${\sf T}_3$-space there is an open neighborhood $V$ of $x$ such that for each $\nu\in\,^{<\omega}\omega$, we have $U_{\nu}\setminus \overline{V}$ is nonempty. Then choose $n_1,\, n_2,\, \cdots, n_k,\,\cdots$ so that $z_{n_1}\in U_{\emptyset}\setminus\overline{V}$, $z_{n_1,n_2}\in U_{n_1}\setminus \overline{V}$, and in general $z_{n_1,\cdots,n_{k+1}}\in U_{n_1,\cdots,n_k}\setminus \overline{V}$. But then
\[
  B_{n_1},\, \sigma(B_{n_1}),\, \cdots,\, B_{n_1,\,\cdots,\, n_k},\, \sigma(B_{n_1},\,\cdots,\, B_{n_1,\cdots,n_k}),\, \cdots
\]
is a play won by ONE since none of TWO's responses is in the neighborhood $V$ of $x$. This contradicts the hypothesis that $\sigma$ is a winning strategy for TWO.
$\Box$

Since first countability is preserved in any generic extension, it follows that for ${\sf T}_3$ separable spaces the fact that TWO has a winning strategy in $\gone^{\omega}(\Omega_x,\Omega_x)$ is likewise preserved.

It is not clear what the values of  ${\sf tp}_{sf}(X,x)$ could be in the case of compact hereditarily separable spaces. ${\sf MA} + \neg{\sf CH}$ implies that each compact hereditarily separable space is first countable, whence we have:
\begin{proposition}\label{cptHS}
{\sf MA} $+ \neg{\sf CH}$ implies that in every compact hereditarily separable space, TWO has a winning strategy in $\gone^{\omega}(\Omega_x,\Omega_x)$ at each $x\in X$.
\end{proposition}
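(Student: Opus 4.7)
My plan is to prove this as an immediate corollary of two pieces already in place. The first is the classical set-theoretic fact, referenced in the sentence immediately preceding the proposition, that under $\textsf{MA} + \neg\textsf{CH}$ every compact Hausdorff hereditarily separable space is first countable (a theorem due to Szentmikl{\'o}ssy). The second is Theorem \ref{twowinscsf}, which for separable $\textsf{T}_3$ spaces equates first countability at $x$ with the existence of a winning strategy of TWO in $\gone^{\omega}(\Omega_x,\Omega_x)$.

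So my first step is to observe that a compact hereditarily separable Hausdorff space is separable and, being compact Hausdorff, is normal and in particular $\textsf{T}_3$. Thus the hypotheses of Theorem \ref{twowinscsf} are satisfied at every point $x$. My second step is to invoke the Szentmikl{\'o}ssy theorem to conclude that under $\textsf{MA} + \neg\textsf{CH}$ the space has a countable neighborhood base at every $x$, so condition (1) of Theorem \ref{twowinscsf} holds at every $x$.

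The only mild subtlety is that Theorem \ref{twowinscsf} is stated as an equivalence but the proof in the text only writes out $(2)\Rightarrow(1)$, so for completeness I should indicate how $(1)\Rightarrow(2)$ goes. This is the straightforward direction: fix a decreasing countable neighborhood base $(U_n:n<\omega)$ at $x$ (the case where $x$ is isolated is trivial since then $\Omega_x=\emptyset$ and ONE cannot move). Define TWO's strategy by $\sigma(O_0,\ldots,O_n)=$ any point of $O_n\cap U_n$; this choice exists because $O_n\in\Omega_x$ means $x\in\overline{O_n}$, so the neighborhood $U_n$ meets $O_n$. For any neighborhood $V$ of $x$ there is an $n$ with $U_n\subseteq V$, hence $\sigma(O_0,\ldots,O_n)\in V$, showing that the set of TWO's responses has $x$ in its closure. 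Thus TWO wins, and $(1)\Rightarrow(2)$ is established.

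I do not expect any real obstacle; the proposition is essentially a bookkeeping combination of the quoted consequence of $\textsf{MA}+\neg\textsf{CH}$ and the equivalence in Theorem \ref{twowinscsf}. The only point worth being careful about is stating clearly which direction of Theorem \ref{twowinscsf} is being used and supplying the short argument for it, since only the reverse direction was written out in detail.
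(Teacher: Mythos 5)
Your proposal is correct and matches the paper's (implicit) argument exactly: the paper derives the proposition from the stated fact that ${\sf MA}+\neg{\sf CH}$ makes compact hereditarily separable spaces first countable, combined with the direction $(1)\Rightarrow(2)$ of Theorem \ref{twowinscsf}. Your added verification of that easy direction (which the paper leaves unwritten) is accurate, including the observation that $\Omega_x=\emptyset$ at isolated points and that TWO's choices from $O_n\cap U_n$ accumulate at $x$.
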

On the other hand, V.V. Fedorchuk proved that $\diamondsuit$ implies the existence of a compact hereditarily separable ${\sf T}_2$-space $X$ which has no convergent sequences. By Theorem \ref{twowinscsf} ${\sf tp}_{sf}(X,x)>\omega$ for each $x\in X$. We discuss this space further in Example 6 below. 
 
Our next result shows that there are strong limitations on which infinite ordinals can occur as a ${\sf tp}_{sf}(X,x)$. Recall that an ordinal $\alpha$ is \emph{additively indecomposable} if $\beta+\gamma<\alpha$ whenever $\beta,\, \gamma < \alpha$.
\begin{proposition}\label{sfanaddindec}
Let $X$ be a topological space and let $x$ be an element of $X$. If the ordinal ${\sf tp}_{sf}(X,x)$ is infinite, then it is additively indecomposable.
\end{proposition}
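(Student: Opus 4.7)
I would argue by contradiction: assume $\alpha := {\sf tp}_{sf}(X,x)$ is infinite but not additively indecomposable, so that $\alpha = \beta + \gamma$ for some $\beta, \gamma < \alpha$, both necessarily positive. Fix a winning strategy $\sigma$ of TWO in $\gone^{\alpha}(\Omega_x,\Omega_x)$. The plan is to produce, from $\sigma$, a winning strategy for TWO in the shorter game $\gone^{\beta}(\Omega_x,\Omega_x)$; since $\beta < \alpha$, this would contradict the minimality in the definition of ${\sf tp}_{sf}(X,x)$.

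The candidate strategy is simply $\sigma$ restricted to the first $\beta$ innings. To show it wins, I fix any play $h = ((O_\xi, x_\xi))_{\xi < \beta}$ compatible with this restriction (so $x_\xi = \sigma(O_0, \dots, O_\xi)$) and set $A = \{x_\xi : \xi < \beta\}$. Define a continuation strategy $\sigma_h$ for TWO in $\gone^{\gamma}(\Omega_x,\Omega_x)$ by sending a partial play $(P_0, \dots, P_\eta)$ with $\eta < \gamma$ to the value of $\sigma$ applied to the concatenation of $h$ with $(P_0, \dots, P_\eta)$. Since $\gamma < \alpha$, the strategy $\sigma_h$ is \emph{not} winning, so there is a play against $\sigma_h$ whose set of TWO-responses $B = \{y_\eta : \eta < \gamma\}$ satisfies $B \notin \Omega_x$. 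Splicing this length-$\gamma$ play onto $h$ gives a full $\sigma$-play of length $\alpha$, and since $\sigma$ is winning, $A \cup B \in \Omega_x$.

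The final step is a small observation about $\Omega_x$: whenever $A \cup B \in \Omega_x$ while $B \notin \Omega_x$, one has $A \in \Omega_x$. Indeed, every move TWO makes lies in some element of $\Omega_x$ played by ONE, and those sets omit $x$ by the very definition of $\Omega_x$, so $x \notin A \cup B$. Consequently $B \notin \Omega_x$ forces $x \notin \overline{B}$; combining this with $x \in \overline{A \cup B} = \overline{A} \cup \overline{B}$ yields $x \in \overline{A} \setminus A$, i.e., $A \in \Omega_x$. Thus the restriction of $\sigma$ to the first $\beta$ innings is indeed winning for TWO in $\gone^{\beta}(\Omega_x,\Omega_x)$, the desired contradiction.

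I do not expect a serious obstacle. The only points that require a moment of care are the routine ordinal-arithmetic fact that any ordinal which is not additively indecomposable admits a decomposition $\alpha = \beta + \gamma$ with both $\beta, \gamma < \alpha$, and the verification that the continuation strategy $\sigma_h$ is well-defined even when $\beta$ or $\gamma$ is a limit ordinal --- but $\sigma_h$ is defined purely by transcribing moves into $\sigma$, so no successor/limit case distinction is needed.
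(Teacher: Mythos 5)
Your proof is correct and is essentially the paper's argument read in contrapositive form: the paper takes arbitrary $\beta,\gamma<\alpha$ and an arbitrary strategy for the length-$(\beta+\gamma)$ game, splices a lost initial segment of length $\beta$ with a lost continuation of length $\gamma$ (via the same continuation-strategy device), and uses the same closure fact you use --- $x\notin A\cup B$ and $\overline{A\cup B}=\overline{A}\cup\overline{B}$ --- in the equivalent form ``$A\notin\Omega_x$ and $B\notin\Omega_x$ imply $A\cup B\notin\Omega_x$.'' The only organizational difference is that your version requires the routine decomposition $\alpha=\beta+\gamma$ with $\beta,\gamma<\alpha$, whereas the paper instead relies (implicitly) on the set of lengths for which TWO has a winning strategy being upward closed; both are easy and both proofs are sound.
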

\begin{proof}
Let $X$ be a space and let $x\in X$ be a point for which $\alpha = {\sf tp}_{sf}(X,x)$ is infinite. We may assume that $\alpha>\omega$. If $\beta$ and $\gamma$ are ordinals less than $\alpha$, then $\beta+\gamma<\alpha$. To see this, consider a strategy $F$ of TWO in the game $\gone^{\beta+\gamma}(\Omega_x,\Omega_x)$ on $X$. Since $F$ is also a strategy for TWO in the game $\gone^{\beta}(\Omega_x,\Omega_x)$, and since $\beta<\alpha$ there is a play, fixed from now on, of the form
\[
  A_0,\, F(A_0),\, \cdots,\, A_{\delta},\, F(A_{\nu}:\nu\le\delta),\,\cdots\,\,\delta<\beta
\] 
of $\gone^{\beta}(\Omega_x,\Omega_x)$ on $X$ lost by TWO. Since this play is lost by TWO, choose a neighborhood $U_0$ of $x$ such that $U_0$ is disjoint from the set of all points chosen so far by TWO, using strategy $F$.

Next we define a strategy $G$ for TWO in the game $\gone^{\gamma}(\Omega_x,\Omega_x)$ by setting for each $\mu<\gamma$:
\[
  G(B_{\nu}:\nu<\mu):=F((A_{\rho}:\rho<\beta)\frown(B_{\nu}:\nu<\mu)).
\]
Since $\gamma$ is less than $\alpha$, $G$ is not a winning strategy for TWO, and thus we find a $G$-play
\[
  B_0,\, G(B_0),\, \cdots, B_{\mu},\, G(B_{\nu}:\nu\le\mu),\, \cdots\,\,\mu<\gamma
\]
lost by player TWO. Since this play is lost by TWO choose a neighborhood $U_1$ of $x$ such that none of the points in moves by TWO in this play meets the set $V_1$. But then we have found an $F$-play of $\gone^{\beta+\gamma}(\Omega_x,\Omega_x)$ lost by TWO since no element of the set selected by TWO meets the neighborhood $U_0\cap U_1$ of $x$.
\end{proof}

\section{Homogeneous compacta.} 

In Theorem 2.8 of \cite{JNS} it is proved that in the generic extension, obtained by forcing with $\aleph_2$ Cohen reals, every homogeneous compact ${\sf T}_5$ space is countably tight and of character at most $\aleph_1$. We now show:
\begin{theorem}\label{t5compacta}
In the generic extension obtained by forcing with $\cohen(\aleph_2)$, for each homogeneous compact ${\sf T}_5$-space $X$, ONE has no winning strategy in $\gone^{\omega}(\Omega_x,\Omega_x)$ at each $x\in X$.
\end{theorem}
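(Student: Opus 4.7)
The plan is to adapt the ``Cohen-generic branch through a tree of TWO-responses'' argument from the proof of Theorem~\ref{cohenrealsconvertcountablytight}, but to run it inside the $\cohen(\aleph_2)$-extension $V[G]$ rather than across a further Cohen stage. Two inputs specific to this extension will be used: first, Theorem~2.8 of \cite{JNS} (already cited above), which gives in $V[G]$ that every homogeneous compact ${\sf T}_5$-space is countably tight at each point and of character at most $\aleph_1$; second, the standard fact that $\mathrm{cov}(\mathcal{M})\ge\aleph_2$ holds in $V[G]$, since any $\aleph_1$-size family of meager sets is, by the c.c.c.\ of $\cohen(\aleph_2)$, already coded in some intermediate extension $V[G\restriction\beta]$ with $\beta<\aleph_2$, and the remaining Cohen reals avoid every meager set coded in that intermediate model.

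Working in $V[G]$, fix $X$, $x\in X$, and a strategy $\sigma$ for ONE in $\gone^{\omega}(\Omega_x,\Omega_x)$. By the countable tightness of $X$ at $x$ one may assume that every move of $\sigma$ is a countable element of $\Omega_x$. I would enumerate $\sigma(\emptyset)$ as $\{y_{(n)}:n<\omega\}$ and, recursively, once $y_{\tau}$ is defined for every $\tau\in\,^{<\omega}\omega$ of length at most $k$, enumerate $\sigma(y_{\tau\restriction 1},\ldots,y_{\tau})$ as $\{y_{\tau\frown(n)}:n<\omega\}$. The resulting family $\{y_{\tau}:\tau\in\,^{<\omega}\omega\setminus\{\emptyset\}\}$ has the property that for each $f\in\,^{\omega}\omega$ the sequence $(y_{f\restriction k})_{k\ge 1}$ is a legal $\sigma$-play, won by TWO exactly when $x\in\overline{\{y_{f\restriction k}:k\ge 1\}}$.

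Using $\chi(X,x)\le\aleph_1$ in $V[G]$, fix a neighborhood base $\{V_{\alpha}:\alpha<\omega_1\}$ at $x$ and for each $\alpha$ set
\[
  F_{\alpha}=\{f\in\,^{\omega}\omega:(\forall k\ge 1)\ y_{f\restriction k}\notin V_{\alpha}\}.
\]
Each $F_\alpha$ is nowhere dense in $\,^{\omega}\omega$: given $s\in\,^{<\omega}\omega$, the set $\sigma(y_{s\restriction 1},\ldots,y_s)=\{y_{s\frown(n)}:n<\omega\}$ is in $\Omega_x$ and so meets $V_\alpha$ at some $y_{s\frown(n)}$, whereupon the basic open set $\{f:f\supseteq s\frown(n)\}$ is disjoint from $F_\alpha$. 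Since $\mathrm{cov}(\mathcal{M})\ge\aleph_2$ in $V[G]$, the $\aleph_1$ many meager sets $F_\alpha$ do not cover $\,^{\omega}\omega$; any $f$ in the complement meets each base neighborhood $V_\alpha$ via some $y_{f\restriction k}$, so $x$ lies in the closure of the TWO-responses, and $\sigma$ is defeated.

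The main obstacle I expect is the jump from countably many meager sets in Theorem~\ref{cohenrealsconvertcountablytight} to $\aleph_1$ many here; this is exactly what the character-$\aleph_1$ part of the JNS theorem delivers, and it is matched by $\mathrm{cov}(\mathcal{M})\ge\aleph_2$ furnished by the $\aleph_2$-Cohen extension, so the two ingredients fit together with no further combinatorial work required.
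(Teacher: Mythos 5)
Your proof is correct, and at its core it runs the same argument as the paper's: use the countable tightness of $X$ at $x$ (from the JNS theorem) to assume ONE's strategy plays countable sets, unfold the strategy into a tree $\{y_{\tau}:\tau\in\,^{<\omega}\omega\}$ of potential TWO-responses, observe that for each neighborhood of $x$ the set of branches all of whose entries miss that neighborhood is closed nowhere dense in $^{\omega}\omega$, and then produce a branch escaping all of these sets by a Cohen-genericity (category) argument; that branch is a play defeating the strategy. The packaging differs in one respect worth noting. The paper factors $\cohen(\aleph_2)$, pushes the countably many names for $x$, the strategy, and the $t_{\sigma}$ into an intermediate extension $V[G_{\alpha}]$ with $\alpha<\omega_2$, and lets a later Cohen real avoid every set $S_U$ coded there, with $U$ ranging over the neighborhoods of $x$ available in that intermediate model. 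You instead stay inside $V[G]$, use the second half of the JNS theorem ($\chi(X,x)\le\aleph_1$) to index the nowhere dense sets by a neighborhood base of size $\aleph_1$, and invoke ${\sf cov}(\mathcal{M})\ge\aleph_2$, whose verification is of course exactly the paper's factoring argument (with the minor standard point that each meager set must first be enlarged to a meager Borel set coded by a real before being captured in an intermediate model). What your version buys is a cleaner endgame: since $\{V_{\alpha}:\alpha<\omega_1\}$ is a base at $x$ in the final model, a branch meeting every $V_{\alpha}$ visibly places $x$ in the closure of TWO's responses, whereas the paper's formulation quantifies only over neighborhoods present in the intermediate extension and leaves implicit why that suffices for neighborhoods of $x$ that appear only in $V[G]$. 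Your appeal to the character bound closes that point at no extra cost.
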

$\Proof$ 
Let $X$ be a homogeneous compact ${\sf T}_5$-space in the generic extension, let $x\in X$ be given, and let $F$ be a strategy of ONE in the game $\gone^{\omega}(\Omega_x,\Omega_x)$ on $X$. All these items are members of the generic extension. Since $X$ is countably tight in this generic extension, we may assume that $F$ calls on ONE to play countable elements of $\Omega_x$.

Thus we define the following: $O_{\emptyset} = F(\emptyset)$ is ONE's first move. Enumerate it bijectively as $(t_n:n<\omega)$. Then for each $n$ define $O_n = F(t_n)$, and enumerate it bijectively as $(t_{n,m}:m<\omega)$. For $n_1$ and $n_2$ define $O_{n_1,n_2} = F(t_{n_1},\,t_{n_1,n_2})$, and so on. In this we we obtain in the generic extension families $(O_{\sigma}: \, \sigma\,\in\, ^{<\omega}\omega)$ of countable elements of $\Omega_x$. Also for each $\sigma\in\,^{<\omega}\omega$ the set $\{t_{\sigma\frown n}:n<\omega\}$ is an enumeration of the elements of $O_{\sigma}$. The $\cohen(\aleph_2)$ names for $x$, $F$, and the individual $O_{\sigma}$ and $t_{\sigma}$, as well as finite sequences of these, are indeed $\cohen(\alpha)$-names for an $\alpha<\omega_2 (=\aleph_2)$, and thus are present in an initial segment of the generic extension. 

In this initial segment of the generic extension define for each neighborhood $U$ of $x$ the set 
\[
  S_U = \{f\in\,^{\omega}\omega: (\forall n)(t_{f\lceil_n} \not \in U)\}.
\]
Each of the sets $S_U$ is a nowhere dense subset of $^{\omega}\omega$ defined in terms of parameters of the initial segment of the generic extension. Thus, subsequently added Cohen reals are not elements of any $S_U$. Let $c$ be such a Cohen real. Then the play
\[
  F(\emptyset),\, t_{(c(0)},\, F(t_{(c(0))}),\, t_{(c(0),\, c(1))},\, F(t_{(c(0))},\, t_{(c(0),\, c(1))},\, \cdots
\]
is lost by ONE. 
$\Box$

Thus, in the Cohen model all homogeneous ${\sf T}_5$ compacta have countable strong fan tightness and are indestructibly countably tight. 
More can be deduced from a various axioms: A. Dow proved in \cite{AD2}, Theorem 6.3, that {\sf PFA} implies that a compact space of countable tightness has an element with a countable neighborhood base. Thus
\begin{proposition} {\sf PFA} implies that for each homogeneous compact ${\sf T}_2$-space, TWO has a winning strategy in $\gone^{\omega}(\Omega_x,\Omega_x)$ at each $x\in X$.
\end{proposition}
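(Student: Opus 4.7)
The plan is to combine Dow's PFA theorem (Theorem 6.3 of \cite{AD2}, cited just above), the homogeneity of $X$, and the easy direction of Theorem~\ref{twowinscsf}. First I would argue that under {\sf PFA} the space $X$ is countably tight: this is where I expect to rely on known structure theory of homogeneous compacta under {\sf PFA} (for example, results of Arhangel'skii and Juhász showing that under {\sf MA}$(\omega_1)$, and hence under {\sf PFA}, homogeneous compact ${\sf T}_2$-spaces have countable tightness). With countable tightness established, Dow's theorem yields some point $p \in X$ with $\chi(X,p) = \aleph_0$, i.e.\ a countable neighborhood base at $p$.

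Next I would use homogeneity to transfer first countability from $p$ to an arbitrary $x \in X$. Fix a homeomorphism $h\colon X \to X$ with $h(p) = x$, and if $\{V_n : n < \omega\}$ is a countable neighborhood base at $p$, then $\{h(V_n) : n < \omega\}$ is a countable neighborhood base at $x$. Thus every point of $X$ is first countable; choose a decreasing countable neighborhood base $(U_n : n < \omega)$ at $x$.

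Finally, TWO's winning strategy in $\gone^{\omega}(\Omega_x,\Omega_x)$ is the obvious one suggested by the easy implication in Theorem~\ref{twowinscsf}: when ONE plays $O_n \in \Omega_x$ in inning $n$, let TWO respond with any point $t_n \in O_n \cap U_n$. The intersection is nonempty because $x \in \overline{O_n}$ and $U_n$ is an open neighborhood of $x$. The resulting sequence $(t_n : n < \omega)$ converges to $x$, so $\{t_n : n < \omega\} \in \Omega_x$ and TWO wins every play.

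The routine part of the argument is the last two paragraphs: the transfer via homogeneity and the convergent-sequence strategy for TWO. The main obstacle is the first paragraph — verifying that {\sf PFA} forces every homogeneous compact ${\sf T}_2$-space to be countably tight, which is the ingredient that activates Dow's theorem. Once countable tightness is in place, the proposition reduces to a clean first-countability argument.
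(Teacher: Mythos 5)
The last three paragraphs of your proposal are exactly the paper's argument: the Proposition is justified there by a single sentence invoking Dow's Theorem 6.3 of \cite{AD2} ({\sf PFA} gives a point of countable character in any compact space of countable tightness), after which homogeneity spreads first countability to every point and the convergent-sequence strategy (the easy direction of Theorem \ref{twowinscsf}, which needs neither separability nor ${\sf T}_3$) hands TWO the win. That part is correct and is the intended proof.

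The genuine problem is your first paragraph. There is no theorem of Arhangel'skii--Juh\'asz (or anyone) that {\sf MA}$(\omega_1)$ or {\sf PFA} makes every homogeneous compact ${\sf T}_2$-space countably tight, and no such theorem is possible: $2^{\omega_1}$ is a compact Hausdorff topological group, hence homogeneous, and it fails countable tightness outright in {\sf ZFC} (the point $\bar{1}$ lies in the closure of the characteristic functions of finite sets, but not in the closure of any countable subfamily, since the supports of countably many such functions miss some coordinate). Indeed, at such a point ONE wins $\gone^{\omega}(\Omega_x,\Omega_x)$ trivially by playing that witness set every inning, so the Proposition as literally printed is false for $2^{\omega_1}$; countable tightness must be read as a hypothesis, which is how the paper uses it --- the preceding sentence applies Dow's theorem to ``a compact space of countable tightness,'' and the companion proposition for $2^{\aleph_0}<2^{\aleph_1}$ states that hypothesis explicitly. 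So you correctly identified that countable tightness is the ingredient needed to activate Dow's theorem, but the step you propose to obtain it from {\sf PFA} cannot be carried out; once countable tightness is assumed rather than derived, the rest of your argument stands and coincides with the paper's.
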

Corollary 4.17 of \cite{dlVega} states that $2^{\aleph_0}<2^{\aleph_1}$ implies that a compact homogeneous space has countable tightness if, and only if, it is first countable. Thus,
\begin{proposition}
$2^{\aleph_0}<2^{\aleph_1}$ implies: A homogeneous compact space has countable tightness if, and only if, TWO has a winning strategy in $\gone^{\omega}(\Omega_x,\Omega_x)$ at each $x\in X$.
\end{proposition}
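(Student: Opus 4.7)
The plan is to reduce the proposition directly to the cited biconditional of de la Vega (Corollary 4.17 of \cite{dlVega}), which says that under $2^{\aleph_0}<2^{\aleph_1}$, a homogeneous compact space has countable tightness if and only if it is first countable. Since the second condition in the statement (TWO having a winning strategy in $\gone^{\omega}(\Omega_x,\Omega_x)$ at every $x$) is, in any $T_3$ space, equivalent to first countability at $x$ whenever the ambient space is separable (by Theorem \ref{twowinscsf}), and since its ``easy half'' (first countability $\Rightarrow$ TWO wins) holds without the separability hypothesis, the proof should consist of routing both implications through first countability.

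For the forward direction, suppose $X$ is a homogeneous compact space with countable tightness. By the cited result of de la Vega together with the hypothesis $2^{\aleph_0}<2^{\aleph_1}$, $X$ is first countable. Fix $x\in X$ and a decreasing countable neighborhood base $(U_n:n<\omega)$ at $x$. I would define TWO's strategy $F$ by letting $F(O_0,\dots,O_n)$ be any point of $O_n\cap U_n$, which is nonempty because $O_n\in\Omega_x$ means $x\in\overline{O_n}$ and $U_n$ is an open neighborhood of $x$. Given any $F$-play $O_0,\,x_0,\,O_1,\,x_1,\,\dots$ and any neighborhood $V$ of $x$, pick $k$ with $U_k\subseteq V$; then $x_n\in U_n\subseteq U_k\subseteq V$ for all $n\ge k$, so $V$ meets $\{x_n:n<\omega\}$. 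Hence $\{x_n:n<\omega\}\in\Omega_x$, i.e., TWO wins.

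For the converse direction, suppose TWO has a winning strategy $F$ in $\gone^{\omega}(\Omega_x,\Omega_x)$ at an arbitrary $x\in X$. Given $A\in\Omega_x$, let ONE play $O_n=A$ in every inning; then $F$ produces points $x_n\in A$ with $\{x_n:n<\omega\}\in\Omega_x$. This countable subset of $A$ witnesses countable tightness of $X$ at $x$, and since $x$ was arbitrary, $X$ is countably tight.

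Since both directions are short applications of results already available in this paper and the quoted literature, the only ``obstacle'' is bookkeeping: making sure to invoke de la Vega's corollary in the forward direction (the hypothesis $2^{\aleph_0}<2^{\aleph_1}$ is used only there), and to note that the easy half of Theorem \ref{twowinscsf} requires no separability assumption, so that the argument applies uniformly to every homogeneous compactum and at every point.
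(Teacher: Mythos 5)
Your proposal is correct and matches the paper's (essentially one-line) argument: the paper derives the proposition directly from de la Vega's Corollary 4.17, with the two implications "first countable $\Rightarrow$ TWO wins" and "TWO wins $\Rightarrow$ countably tight" left implicit exactly as you spell them out. Your explicit strategy for TWO from a decreasing neighborhood base, and the constant-play argument for the converse, are the standard fillings-in and are both sound.
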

And by Theorem 4.18 of \cite{dlVega}, 
\begin{proposition}
It is consistent, relative to the consistency of ${\sf ZFC}$, that at each point $x$ of a homogeneous compact ${\sf T}_5$-space TWO has a winning strategy in the game $\gone^{\omega}(\Omega_x,\Omega_x)$. 
\end{proposition}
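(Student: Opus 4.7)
The strategy is simply to chain two facts: a consistency result about homogeneous compact ${\sf T}_5$-spaces from the cited paper, with the (easy) observation that first countability at $x$ is enough for TWO to win $\gone^\omega(\Omega_x,\Omega_x)$. The plan is therefore to start a generic extension of a model of ${\sf ZFC}$ in which Theorem 4.18 of \cite{dlVega} holds --- that is, a model in which every homogeneous compact ${\sf T}_5$-space is first countable --- and then reduce the game-theoretic statement to first countability at a point.

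First I would record the easy direction $(1)\Rightarrow(2)$ of Theorem \ref{twowinscsf}, which, unlike $(2)\Rightarrow(1)$, does not require separability and works in any ${\sf T}_1$ space. Namely, if $\chi(X,x)=\aleph_0$, let $\{V_n:n<\omega\}$ be a decreasing countable neighborhood basis at $x$, and define a strategy $\sigma$ for TWO by setting $\sigma(O_1,\dots,O_n)$ to be any element of $O_n\cap V_n$; this is nonempty because $x\in\overline{O_n}$ and $V_n$ is open. Then TWO's moves $(x_n:n<\omega)$ converge to $x$, so $\{x_n:n<\omega\}\in\Omega_x$ and $\sigma$ is winning.

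Second I would invoke Theorem 4.18 of \cite{dlVega}, which provides a model of ${\sf ZFC}$ in which every homogeneous compact ${\sf T}_5$-space $X$ is first countable at every point (thanks to, say, the hypothesis $2^{\aleph_0}<2^{\aleph_1}$ combined with the relevant tightness conclusion quoted just above from \cite{dlVega}). Combining this with the first step immediately yields: in that model, for every homogeneous compact ${\sf T}_5$-space $X$ and every $x\in X$, TWO has a winning strategy in $\gone^\omega(\Omega_x,\Omega_x)$.

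There is no real obstacle; the only thing to watch is that the paper's Theorem \ref{twowinscsf} explicitly proves only $(2)\Rightarrow(1)$, so I should write the one-line verification of $(1)\Rightarrow(2)$ rather than simply citing that theorem. The rest is quotation.
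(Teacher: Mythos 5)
Your proposal is correct and follows essentially the same route as the paper, which simply cites Theorem 4.18 of \cite{dlVega} and implicitly uses the easy fact that first countability at $x$ gives TWO a winning strategy in $\gone^{\omega}(\Omega_x,\Omega_x)$. Your explicit verification of that implication (choosing $x_n\in O_n\cap V_n$ for a decreasing basis $\{V_n\}$, and noting it needs no separability hypothesis) is exactly the detail the paper leaves unstated.
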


\section{Generic left-separated spaces.}

Let $\nu$ be an ordinal. In Section 2 of \cite{JS} it is proved that in the generic extension ${\sf V}^{\poset_{\nu}}$ by a special complete suborder $\poset_{\nu}$ of the Cohen partially ordered set ${\sf Fn}(\nu\times\nu,2)$, there is a topology $\tau$ on $\nu$ such that the space $X_{\nu} = (\nu,\tau)$ has the following properties:
\begin{itemize}
  \item[(a)]{$X_{\nu}$ is hereditarily Lindel\"of (Lemma 2.1);} 
  \item[(b)]{$X_{\nu}$ has countable tightness (Lemma 2.2);} 
  \item[(c)]{The density of $X_{\nu}$ is ${\sf cof}(\nu)$ (Lemma 2.3);} 
  \item[(d)]{$X_{\nu}$ is left-separated in the natural well-ordering of $\nu$.} 
\end{itemize}
We will now argue that if $\nu$ is an uncountable regular cardinal, then $X_{\nu}$ is (1) an {\sf L}-space which is hereditarily a Rothberger space (and thus indestructibly Lindel\"of and  a {\sf D}-space), and (2) ONE has no winning strategy in the game $\gone^{\omega}(\Omega_x,\Omega_x)$ at each $x\in X_{\nu}$, so that $X_{\nu}$ in indestructibly countably tight and has countable strong fan tightness.

The fact that $X_{\nu}$ is an {\sf L}-space when $\nu$ is a regular uncountable cardinal was noted in \cite{JS}. Regarding the rest of the claimed properties of $X_{\nu}$:

The first point is that if $\kappa$ (previously $\nu$) is a regular uncountable cardinal then the partial order $\poset_{\kappa}$ densely embed into ${\sf Fn}(\kappa,2)$, so that the generic extension is the same as the generic extension by $\cohen(\kappa)$. Thus $X_{\kappa}$ is obtained by adding $\kappa$ Cohen reals. To see this we review the definition of $\poset_{\kappa}$. An element $p$ of ${\sf Fn}(\kappa\times\kappa,2)$ is an element of $\poset_{\kappa}$ if, and only if, it has the following two properties:
\begin{enumerate}
  \item{If $(\alpha,\alpha)$ is in the domain of $p$, then $p(\alpha,\alpha)=1$;} 
  \item{If $(\alpha,\beta)$ is in the domain of $p$ and $p(\alpha,\beta)=1$, then $\alpha\le \beta$.}  
\end{enumerate}
Now let $S$ be the subset of $\kappa\times\kappa$ consisting of the pairs $(\alpha,\beta)$ with $\alpha<\beta$. Then the mapping 
\[
  F:P_{\kappa}\longrightarrow {\sf Fn}(S,2)
\]
defined by $F(p) = p\lceil_{S}$, the restriction of $p$ to $S$, is a dense embedding (See \cite{Kunen}, Definition VII.7.7). Thus, as $\vert S\vert = \kappa$, the generic extension by $\poset_{\kappa}$ is the same as the generic extension by $\cohen(\kappa)$.

The second point is that since $\kappa$ is regular and uncountable, and $X_{\kappa}$ has countable tightness, An argument as in the proof of Theorem \ref{t5compacta}  
shows that in the generic extension ONE has no winning strategy in $\gone^{\omega}(\Omega_x,\Omega_x)$ at each $x$ in $X_{\kappa}$. Then Theorem \ref{indestrtightchar} implies that $X_{\kappa}$ is indestructibly countably tight. Also apply Theorem 11 of \cite{MSFT} to conclude that $X_{\kappa}$ is Rothberger, and Corollary 10 of \cite{MSFT} to conclude that $X_{\kappa}$ is indestructibly Lindel\"of. By results of Aurichi \cite{Au} the fact that $X_{\kappa}$ is Rothberger implies that $X_{\kappa}$ is a {\sf D}-space. 

\section{Countable tightness in ${\sf C}_p(X)$}

The selection principle $\sfin(\mathcal{A},\mathcal{B})$ is defined as: For each sequence $(A_n:n\in\naturals)$ of elements of $\mathcal{A}$ there is a corresponding sequence $(B_n:n\in\naturals)$ such that for each $n$, $B_n$ is a finite subset of $A_n$, and $\bigcup\{B_n:n\in\naturals\}$ is an element of $\mathcal{B}$.  Arkhangel'skii introduced the notion of countable fan tightness. In the notation of selection principles, a space is said to be \emph{countably fan tight at} $x$ if the selection principle $\sfin(\Omega_x,\Omega_x)$ holds. 
The implications 
\[
  \sone(\mathcal{A},\mathcal{B})\Rightarrow \sfin(\mathcal{A},\mathcal{B})\Rightarrow \mbox{ each member of }\mathcal{A}\mbox{ has a countable subset that is in }\mathcal{B}
\]
always hold. It follows that countable strong fan tightness implies countable fan tightness which in turn implies countable tightness. These three properties have been extensively investigated in the function spaces ${\sf C}_p(X)$, where a nice duality theory relating tightness properties of ${\sf C}_p(X)$ to covering properties of $X$ emerged. There are natural games corresponding to the selection principles $\sone(\mathcal{A},\mathcal{B})$ and $\sfin(\mathcal{A},\mathcal{B})$. We have already encountered the games $\gone^{\alpha}(\mathcal{A},\mathcal{B})$ of ordinal length $\alpha$ earlier in this paper. The game $\gfin^{\alpha}(\mathcal{A},\mathcal{B})$ proceeds as follows: Players ONE and TWO play an inning per ordinal $\beta<\alpha$. In an inning $\beta$ ONE first selects an element $O_{\beta}$ from $\mathcal{A}$. Then TWO responds by selecting a finite set $T_{\beta}\subseteq O_{\beta}$. A play
\[
  O_0,\, T_0,\, \cdots,\,O_{\beta},\, T_{\beta},\,\cdots\, \beta<\alpha
\]
is won by TWO if $\bigcup\{T_{\beta}:\beta<\alpha\}$ is an element of $\mathcal{B}$; otherwise, ONE wins.

The games $\gone^{\omega}(\Omega_x\Omega_x)$ related to countable strong fan tightness, and $\gfin^{\omega}(\Omega_x,\Omega_x)$ related to countable fan tightness were introduced in \cite{COC3}, where they were investigated in the context of ${\sf C}_p(X)$. To describe this background material we introduce the following:

Letting $\open$ denote the family of open covers of a topological space $X$, the space is said to have the \emph{Menger}  property if $\sfin(\open,\open)$ holds; it is said to have the \emph{Rothberger} property if $\sone(\open,\open)$ holds. W. Hurewicz proved in 1925 that a topological space $X$ has the Menger property if, and only if, ONE does not have a winning strategy in the game $\gfin^{\omega}(\open,\open)$, and Pawlikowski proved in 1993  that a topological space has the Rothberger property if, and only if, ONE has no winning strategy in the game $\gone^{\omega}(\open,\open)$. According to Gerlits and Nagy an open cover $\mathcal{U}$ of a space $X$ is an $\omega$-cover if $X$ is not a member of $\mathcal{U}$, but for each finite subset $F$ of $X$ there is a $U\in\mathcal{U}$ such that $F\subseteq U$. We let $\Omega$ denote the collection of open $\omega$-covers of $X$.

The following results are known in connection with player ONE:
\begin{theorem} Let $X$ be a ${\sf T}_{3.5}$-space. 
\begin{itemize}
 \item[A]{(\cite{COC3}, Theorem 11) The following statements are equivalent:
      \begin{enumerate}
        \item{ ${\sf C}_p(X)$ has countable fan tightness.}  
        \item{ONE has no winning strategy in the game $\gfin^{\omega}(\Omega_{\mathbf 0},\Omega_{\mathbf 0})$ on ${\sf C}_p(X)$.}  
        \item{ONE has no winning strategy in the game $\gfin^{\omega}(\Omega,\Omega)$ on $X$.}  
        \item{$X$ has the property $\sfin(\Omega,\Omega)$.} 
      \end{enumerate}
  }
\item[B]{(\cite{COC3}, Theorem 13) The following statements are equivalent:
      \begin{enumerate}
        \item{${\sf C}_p(X)$ has countable strong fan tightness.}  
        \item{ONE has no winning strategy in the game $\gone^{\omega}(\Omega_{\mathbf 0},\Omega_{\mathbf 0})$ on ${\sf C}_p(X)$.}   
        \item{ONE has no winning strategy in the game $\gone^{\omega}(\Omega,\Omega)$ on $X$.}  
        \item{$X$ has the property $\sone(\Omega,\Omega)$.} 
      \end{enumerate}}
\end{itemize}
\end{theorem}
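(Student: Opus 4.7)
The plan is to prove both parts A and B in parallel by establishing (i) the classical Arkhangel\cprime ski--Gerlits--Nagy correspondence between $\Omega_{\mathbf{0}}$ in $C_p(X)$ and open $\omega$-covers of $X$, which gives $(1)\Leftrightarrow (4)$ and $(2)\Leftrightarrow (3)$, and (ii) the Hurewicz/Pawlikowski-style argument establishing $(4)\Leftrightarrow (3)$ directly on $X$.

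First I would set up the translation. Given $A\in\Omega_{\mathbf{0}}\subseteq C_p(X)$, for each $n$ let $\mathcal{U}_A^n=\{f^{-1}(-1/n,1/n):f\in A\}\setminus\{X\}$. The defining property that every basic neighborhood $[F,1/n]$ of $\mathbf{0}$ meets $A$ translates verbatim into: every finite $F\subseteq X$ is contained in some element of $\mathcal{U}_A^n$, so $\mathcal{U}_A^n\in\Omega$. Conversely, given $\mathcal{U}\in\Omega$, using $T_{3.5}$ pick for each $U\in\mathcal{U}$ and each finite $F\subseteq U$ a function $f_{U,F}\in C_p(X)$ with $f_{U,F}(F)=\{0\}$ and $f_{U,F}(X\setminus U)\subseteq\{1\}$; then $A_\mathcal{U}:=\{f_{U,F}:U\in\mathcal{U},\;F\subseteq U\text{ finite}\}\in\Omega_{\mathbf{0}}$. (For the $S_{fin}$/countable-fan-tightness version of part A one instead works with finite unions and uses the fact that finite sums of small functions remain small.) From these two transforms, $(1)\Leftrightarrow(4)$ for both selection principles follows by routine bookkeeping. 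Moreover the transforms are local enough that a winning strategy of ONE in one game converts to a winning strategy in the other, giving $(2)\Leftrightarrow(3)$ by symmetric arguments; the main subtlety is that a strategy for ONE in the $C_p(X)$ game eventually needs to be fed functions selected from $A_{\mathcal{U}_n}$ rather than from $\mathcal{U}_n$, but one handles this by letting ONE's $\omega$-cover move $\mathcal{U}_n$ be reconstructed as $\mathcal{U}^1_{A}$ for the $A\in\Omega_{\mathbf{0}}$ that ONE would have played, and composing.

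The implication $(3)\Rightarrow(4)$ is trivial: a constant strategy of ONE that in each inning plays a fixed $\omega$-cover (from the given sequence) provides a play against which TWO's winning sequence selects exactly a witness for the selection principle. The substantive direction is $(4)\Rightarrow(3)$, and here I would invoke Pawlikowski's method for part B and Hurewicz's method for part A. For B, given a strategy $\sigma$ of ONE in $G_1^\omega(\Omega,\Omega)$, one builds a tree of $\omega$-covers indexed by $^{<\omega}\omega$: at node $s$ we place a countable subcover enumeration of $\sigma(t_{s\lceil_1},\ldots,t_{s\lceil_{|s|}})$, where $t_\tau$ denotes the element chosen at node $\tau$. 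Applying $S_1(\Omega,\Omega)$ to the countably many $\omega$-covers indexed by the levels (after a suitable diagonal reorganization, using that the join of finitely many $\omega$-covers is again an $\omega$-cover) produces a branch through the tree which is a $\sigma$-play lost by ONE. Part A is analogous with finite subsets replacing singletons, using Hurewicz's original pigeonhole on finite selections. These translations transfer back via the $\Omega_{\mathbf{0}}\leftrightarrow\Omega$ correspondence to yield the $C_p(X)$ side.

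The main obstacle is the Pawlikowski/Hurewicz step $(4)\Rightarrow(3)$: the diagonal reorganization of the strategy tree has to be done so that each branch gives a legitimate play of ONE's strategy, and this requires arranging the countably many $\omega$-covers that arise from $\sigma$ to be presented to the selection principle in a uniform way. A secondary technical issue is that in the correspondence $\mathcal{U}\mapsto A_\mathcal{U}$ one must ensure $\mathbf{0}\notin A_\mathcal{U}$, which forces taking $\mathcal{U}$ to omit $X$ and, in $T_{3.5}$ spaces, passing to a cozero refinement — standard but worth checking.
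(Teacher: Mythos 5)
First, a point of reference: the paper itself gives no proof of this theorem --- it is quoted as background and attributed to Theorems 11 and 13 of \cite{COC3} --- so the comparison must be with the arguments of that cited source. Your overall architecture agrees with the standard one: the ${\sf T}_{3.5}$ translation between $\Omega_{\mathbf 0}$ in ${\sf C}_p(X)$ and open $\omega$-covers of $X$ gives $(1)\Leftrightarrow(4)$ and $(2)\Leftrightarrow(3)$, and $(3)\Rightarrow(4)$ is indeed immediate from the strategy that ignores TWO's moves and plays the given sequence of $\omega$-covers. Those parts are essentially correct, modulo the bookkeeping you acknowledge.

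The genuine gap is in $(4)\Rightarrow(3)$, which you rightly call the substantive direction but then dispose of with ``a suitable diagonal reorganization \ldots\ produces a branch through the tree which is a $\sigma$-play lost by ONE.'' This step fails as described. Placing a countable $\omega$-cover at each node of $^{<\omega}\omega$ and applying $\sone(\Omega,\Omega)$ (or $\sfin(\Omega,\Omega)$) to the resulting countable family yields one selection per node, each made independently of the others; a legal play against $\sigma$ must instead follow a single branch, and the element selected at a node must itself determine which node of the next level the play visits. Nothing in the selection principle forces the independently made choices to cohere along any branch. This is exactly the obstruction that makes Pawlikowski's theorem ($\sone(\open,\open)$ implies ONE has no winning strategy in $\gone^{\omega}(\open,\open)$) a hard result rather than a diagonalization, and the obstruction is present verbatim for $\omega$-covers; by contrast, the naive tree argument does succeed when the target class is $\Gamma_x$ (Sharma's theorem, quoted later in the paper), because the ``all but finitely many'' requirement supplies the needed slack, and $\Omega$ supplies none. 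The proofs underlying \cite{COC3} close this gap by a detour you do not take: $\sone(\Omega,\Omega)$ (respectively $\sfin(\Omega,\Omega)$) for $X$ is equivalent to the Rothberger (respectively Menger) property of every finite power $X^k$; Pawlikowski's (respectively Hurewicz's) theorem then eliminates winning strategies of ONE in $\gone^{\omega}(\open,\open)$ (respectively $\gfin^{\omega}(\open,\open)$) on each $X^k$; and a separate translation converts a strategy of ONE in the $\omega$-cover game on $X$ into strategies in the ordinary cover games on the powers. To complete your proof you must either import these results explicitly or reproduce Pawlikowski's and Hurewicz's arguments; the tree diagonalization alone does not suffice.
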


Interest in the existence of winning strategies of player TWO has been growing -see for example \cite{ABPR} - since the works \cite{BD1} and \cite{BD2} of Barman and Dow on the topic. In \cite{BD2}  Theorem 3.6 Barman and Dow show that if the ${\sf T}_{3.5}$ space $X$ is $\sigma$-compact, then TWO has a winning strategy in $\gfin^{\omega}(\Omega_{\mathbf 0},\Omega_{\mathbf 0})$ on ${\sf C}_p(X)$. In Theorem 2 of \cite{ABPR} Bella proves for Tychonoff spaces $X$ that if TWO has a winning strategy in the game $\gfin^{\omega}(\Omega_{\mathbf 0},\Omega_{\mathbf 0})$ on ${\sf C}_p(X)$, then TWO has a winning strategy in the game $\gfin^{\omega}(\Omega,\Omega)$ on $X$.  In Corollary 2 of \cite{ABPR} Bella points out that if the ${\sf T}_{3.5}$ space $X$ has the property that each closed set is a ${\sf G}_{\delta}$-set, then the converse also holds. We shall now show that the hypothesis that each closed subset of $X$ is a ${\sf G}_{\delta}$-set is not needed, and we shall also extend these results to larger countable ordinals.

Corresponding work for games of the form $\gone^{\alpha}(\mathcal{A},\mathcal{B})$ was done in \cite{Gamelength}. Along the lines of that work we define for a topological space $X$:
\[
  {\sf tp}_{\omega-{\sf fin}}(X) := \min\{\alpha: \mbox{ TWO has a winning strategy in the game $\gfin^{\alpha}(\Omega,\Omega)$ on $X$}\}
\]
First we derive the analogue of Theorem 4 (1) of \cite{Gamelength} . Recall that an ordinal number $\alpha>0$ is \emph{additively indecomposable} if for all ordinals $\beta,\, \gamma<\alpha$ we have $\beta+\gamma<\alpha$.

\begin{proposition}\label{finaddindec}
Let $X$ be a topological space. If the ordinal ${\sf tp}_{\omega-{\sf fin}}(X)$ is infinite, then it is additively indecomposable.
\end{proposition}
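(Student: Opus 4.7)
The plan is to mirror the proof of Proposition~\ref{sfanaddindec} almost verbatim, substituting the game $\gfin^{\alpha}(\Omega,\Omega)$ on $X$ for $\gone^{\alpha}(\Omega_x,\Omega_x)$ and replacing a ``neighborhood of $x$ avoiding TWO's points'' witness by a ``finite subset of $X$ not contained in any of TWO's chosen open sets'' witness. Put $\alpha = {\sf tp}_{\omega-{\sf fin}}(X)$, assume $\alpha$ is infinite, and note that we may take $\alpha > \omega$ (otherwise additive indecomposability is automatic). Fix $\beta,\gamma<\alpha$; the objective is to show $\beta+\gamma<\alpha$, i.e., that no strategy of TWO in $\gfin^{\beta+\gamma}(\Omega,\Omega)$ is winning.

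Given an arbitrary strategy $F$ for TWO in $\gfin^{\beta+\gamma}(\Omega,\Omega)$, the first step is to regard $F$ as a strategy in the shorter game $\gfin^{\beta}(\Omega,\Omega)$. Since $\beta<\alpha$ this restriction is not winning, so fix an $F$-play $(A_\rho,F(A_\nu:\nu\le\rho))_{\rho<\beta}$ lost by TWO, and pick a finite $K_0\subseteq X$ witnessing that $\bigcup_{\rho<\beta}F(A_\nu:\nu\le\rho)$ is not an $\omega$-cover of $X$. The second step defines a strategy $G$ for TWO in $\gfin^{\gamma}(\Omega,\Omega)$ by prepending the fixed $\beta$-play:
\[
  G(B_\nu:\nu\le\mu) := F((A_\rho:\rho<\beta)\frown(B_\nu:\nu\le\mu)).
\]
Since $\gamma<\alpha$, $G$ is not winning either, so fix a lost $G$-play $(B_\mu)_{\mu<\gamma}$ and extract a finite $K_1\subseteq X$ not contained in any member of $\bigcup_{\mu<\gamma}G(B_\nu:\nu\le\mu)$.

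The final step is synthesis: the concatenation $(A_\rho:\rho<\beta)\frown(B_\mu:\mu<\gamma)$ is, by the very definition of $G$, a legitimate $F$-play of length $\beta+\gamma$, and $K_0\cup K_1$ is a finite subset of $X$ not covered by any open set appearing in TWO's combined selection (any such open set would contain both $K_0$ and $K_1$, contradicting one of the two witnesses). Hence TWO loses this $F$-play, so $F$ is not a winning strategy, and $\beta+\gamma<\alpha$ as required. I do not foresee a substantive obstacle; the only conceptual adjustment relative to Proposition~\ref{sfanaddindec} is that intersection of neighborhoods is traded for union of finite sets, and the ``freeze a lost $\beta$-prefix, then reuse it to build $G$'' trick transfers unchanged from the $\gone$ setting to the $\gfin$ setting.
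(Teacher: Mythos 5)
Your proposal is correct and follows essentially the same route as the paper's proof: restrict $F$ to a $\beta$-length game, extract a lost play with a finite witness $F_0$, prepend that play to define $G$ in the $\gamma$-length game, extract a second finite witness $F_1$, and observe that $F_0\cup F_1$ defeats the concatenated $F$-play. No substantive differences to report.
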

\begin{proof}
Let $X$ be a space for which $\alpha = {\sf tp}_{\omega-{\sf fin}}(X)$ is infinite. We may assume that $\alpha>\omega$. Let $\beta$ and $\gamma$ be ordinals less than $\alpha$. We show that $\beta+\gamma<\alpha$. To this end, consider a strategy $F$ of TWO in the game $\gfin^{\beta+\gamma}(\Omega,\Omega)$ on $X$. Note that $F$ is also a strategy for TWO in the game $\gfin^{\beta}(\Omega,\Omega)$, and that since $\beta<\alpha$ there is a play, fixed from now on, of the form
\[
  \mathcal{U}_0,\, F(\mathcal{U}_0),\, \cdots,\, \mathcal{U}_{\delta},\, F(\mathcal{U}_{\nu}:\nu\le\delta),\,\cdots\,\,\delta<\beta
\] 
of $\gfin^{\beta}(\Omega,\Omega)$ on $X$ lost by TWO. Since this play is lost by TWO, choose a finite set $F_0\subset X$ such that $F_0$ is not a subset of any of the sets in TWO's responses in this game.

Next we define a strategy $G$ for TWO in the game $\gfin^{\gamma}(\Omega,\Omega)$ by setting for each $\mu<\gamma$:
\[
  G(\mathcal{V}_{\nu}:\nu<\mu):=F((\mathcal{U}_{\rho}:\rho<\beta)\frown(\mathcal{V}_{\nu}:\nu<\mu)).
\]
Since $\gamma$ is less than $\alpha$, $G$ is not a winning strategy for TWO, and thus we find a $G$-play
\[
  \mathcal{V}_0,\, G(\mathcal{V}_0),\, \cdots, \mathcal{V}_{\mu},\, G(\mathcal{V}_{\nu}:\nu\le\mu),\, \cdots\,\,\mu<\gamma
\]
lost by player TWO. Since this play is lost by TWO choose a finite set $F_1\subset X$ such that none of the sets in moves by TWO in this play contains the set $F_1$. But then we have found an $F$-play of $\gfin^{\beta+\gamma}(\Omega,\Omega)$ lost by TWO since no element of the set selected by TWO contains the finite subset $F_0\cup F_1$ of $X$.
\end{proof}

Define for a space $X$ and a point $x\in X$ the ordinal functions
\[
  {\sf tp}_{fin-ft}(X,x) = \min\{\alpha: \mbox{ TWO has a winning strategy in the game }\gfin^{\alpha}(\Omega_{x},\Omega_{x})\}
\]
and 
\[
  {\sf tp}_{fin-ft}(X) = \sup\{ {\sf tp}_{fin-ft}(X,x) : x\in X)\}.
\]
Using a similar argument as in the proof of Proposition \ref{sfanaddindec} we find:
\begin{proposition}\label{fanaddindec}
Let $X$ be a topological space and let $x$ be an element of $X$. If the ordinal ${\sf tp}_{fin-ft}(X,x)$ is infinite, then it is additively indecomposable.
\end{proposition}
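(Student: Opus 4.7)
The plan is to mirror almost verbatim the argument of Proposition \ref{sfanaddindec}, with the only adjustment being that TWO's moves are now finite subsets of ONE's moves rather than single points. Assume $\alpha = {\sf tp}_{fin-ft}(X,x)$ is infinite; we may assume $\alpha > \omega$. Fix ordinals $\beta, \gamma < \alpha$; the goal is to show $\beta + \gamma < \alpha$ by producing, given any strategy $F$ of TWO in $\gfin^{\beta+\gamma}(\Omega_x,\Omega_x)$, an $F$-play lost by TWO.

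First I would restrict $F$ to the first $\beta$ innings, obtaining a strategy of TWO in $\gfin^{\beta}(\Omega_x,\Omega_x)$. Since $\beta < \alpha$, this restriction is not winning, so fix once and for all an $F$-play
\[
  A_0,\, F(A_0),\, \ldots,\, A_{\delta},\, F(A_{\nu}:\nu\le\delta),\,\ldots\,\,\delta<\beta
\]
in which TWO loses. Because TWO loses, the union $B$ of the finite sets TWO played is not in $\Omega_x$, so $x \notin \overline{B}$; choose an open neighborhood $U_0$ of $x$ with $U_0 \cap B = \emptyset$.

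Next, define a strategy $G$ for TWO in $\gfin^{\gamma}(\Omega_x,\Omega_x)$ by
\[
  G(\mathcal{V}_{\nu}:\nu<\mu) := F((A_{\rho}:\rho<\beta)\frown(\mathcal{V}_{\nu}:\nu<\mu)),
\]
for each $\mu < \gamma$. Since $\gamma < \alpha$, $G$ is not winning, so fix a $G$-play lost by TWO; let $C$ denote the union of TWO's finite moves in this play, and choose an open neighborhood $U_1$ of $x$ with $U_1 \cap C = \emptyset$. Concatenating the two plays yields an $F$-play of $\gfin^{\beta+\gamma}(\Omega_x,\Omega_x)$ in which the total union of TWO's finite sets equals $B \cup C$, and this set is disjoint from the neighborhood $U_0 \cap U_1$ of $x$. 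Therefore $x \notin \overline{B \cup C}$, so $B \cup C \notin \Omega_x$ and TWO loses. Hence $F$ is not a winning strategy, witnessing $\beta + \gamma < \alpha$.

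There is no real obstacle; the proof is a direct transcription of Proposition \ref{sfanaddindec}, with ``a point not in the closure'' replaced by ``a finite set whose union with previously selected finite sets still avoids a neighborhood of $x$''. The only thing to double-check is that one is free to fix the losing play in step one before defining $G$, so that $G$ is genuinely a function of TWO's past responses, which is immediate because the initial segment $(A_\rho : \rho < \beta)$ is a constant that does not depend on $G$'s input.
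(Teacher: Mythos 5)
Your proof is correct and takes essentially the same approach as the paper, which offers no separate argument for this proposition beyond the instruction to adapt the proof of Proposition \ref{sfanaddindec}; your transcription --- replacing TWO's single points by finite sets and observing that the union of TWO's finite selections over the concatenated play avoids the neighborhood $U_0\cap U_1$ of $x$ --- is exactly the intended adaptation. Nothing further is needed.
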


Next we derive an analogue of Theorem 4 (2) of \cite{Gamelength}
\begin{proposition}\label{typofstrfortwo}
Let $X$ be a topological space such that ${\sf tp}_{fin-\omega}(X) = \alpha\ge \omega$. Then TWO has a winning strategy $G$ in $\gfin^{\alpha}(\Omega,\Omega)$ which has the following property:
\begin{quote}
{\tt For every sequence $(\mathcal{U}_{\gamma}:\gamma<\alpha)$ of $\omega$-covers of $X$, and for each $\gamma<\alpha$ the set 
$\bigcup \{G(\mathcal{U}_{\nu}:\nu\le\rho): \gamma <\rho<\alpha\}$
is an $\omega$-cover of $X$.}
\end{quote}
\end{proposition}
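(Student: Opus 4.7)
The plan is to extract $G$ from an arbitrary winning strategy $F$ of TWO in $\gfin^{\alpha}(\Omega,\Omega)$ by interleaving ``shifted copies'' of $F$ anchored at a cofinal sequence of checkpoints in $\alpha$. First I would invoke Proposition \ref{finaddindec}: since ${\sf tp}_{\omega-{\sf fin}}(X)=\alpha\ge\omega$, the ordinal $\alpha$ is additively indecomposable, so $\gamma+\alpha=\alpha$ for every $\gamma<\alpha$. This self-similarity gives the key fact that, for every $\gamma<\alpha$ and every play $(\mathcal{U}_\nu:\nu<\alpha)$ by ONE, the interval $[\gamma,\alpha)$ has order type $\alpha$, so that the ``$\gamma$-shifted $F$-play'' $\bigcup_{\gamma\le\rho<\alpha}F(\mathcal{U}_\gamma,\mathcal{U}_{\gamma+1},\ldots,\mathcal{U}_\rho)$ is a winning $F$-play of length $\alpha$ on $(\mathcal{U}_{\gamma+\sigma})_{\sigma<\alpha}$, and hence an $\omega$-cover of $X$.

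Next I would fix a strictly increasing sequence $(\alpha_n)_{n<\omega}$ cofinal in $\alpha$ with $\alpha_0=0$, chosen sparsely enough that $\{n:\alpha_n\le\rho\}$ is finite for every $\rho<\alpha$, and define
\[
  G(\mathcal{U}_\nu:\nu\le\rho):=\bigcup\bigl\{\,F(\mathcal{U}_{\alpha_n},\mathcal{U}_{\alpha_n+1},\ldots,\mathcal{U}_\rho)\,:\,n<\omega,\ \alpha_n\le\rho\,\bigr\}.
\]
At each inning $\rho<\alpha$ this is a finite union of finite sets, so $G$'s response is finite as required. To verify the tail property, fix $\gamma<\alpha$ and choose $n$ with $\alpha_n>\gamma$; then for every $\rho\in[\alpha_n,\alpha)$ one has $G(\mathcal{U}_\nu:\nu\le\rho)\supseteq F(\mathcal{U}_{\alpha_n},\ldots,\mathcal{U}_\rho)$, whence
\[
  \bigcup\{G(\mathcal{U}_\nu:\nu\le\rho):\gamma<\rho<\alpha\}\ \supseteq\ \bigcup_{\alpha_n\le\rho<\alpha}F(\mathcal{U}_{\alpha_n},\ldots,\mathcal{U}_\rho),
\]
and the right-hand side is the $\alpha_n$-shifted $F$-play, an $\omega$-cover by the first paragraph. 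Observing that the full $G$-play is a superset of any of its tails then yields that $G$ itself wins $\gfin^{\alpha}(\Omega,\Omega)$.

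The step I expect to be the main obstacle is the case ${\rm cof}(\alpha)>\omega$: no cofinal $\omega$-sequence exists, and any checkpoint sequence of length ${\rm cof}(\alpha)$ has infinitely many terms below limits of countable cofinality, breaking the finiteness of $G$'s moves. Resolving that case requires a more delicate priority-style bookkeeping that maintains only finitely many ``active'' checkpoints at each inning while guaranteeing that above every $\gamma<\alpha$ some checkpoint is eventually promoted to ``eternal'' status, so that its shifted $F$-play is fully represented in the tail of $G$'s moves. The shift idea itself, supplied by additive indecomposability, is exactly the ingredient that makes any such bookkeeping feasible in principle; only the combinatorics of maintaining finite active lists in the uncountable-cofinality case is non-routine.
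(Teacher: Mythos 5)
Your construction is correct and is essentially the argument the paper relies on: its ``proof'' is a one-line citation to Theorem 4(2) of \cite{Gamelength}, and that argument is exactly your interleaving of shifted copies of a winning strategy $F$ anchored at a cofinal $\omega$-sequence, with additive indecomposability (Proposition \ref{finaddindec}) guaranteeing that each shifted play is again a full play of length $\alpha$ and hence yields an $\omega$-cover. Two minor points: the ``sparseness'' you impose is automatic, since for any strictly increasing sequence cofinal in $\alpha$ the set $\{n:\alpha_n\le\rho\}$ is a proper, hence finite, initial segment of $\omega$ for every $\rho<\alpha$; and the case ${\sf cof}(\alpha)>\omega$ that you flag does not arise in the paper's intended scope, as the ordinals here are countable --- the sole application of this proposition, in the proof of Theorem \ref{twowinsduality}, explicitly assumes $\alpha$ countable and fixes an $\omega$-sequence converging to $\alpha$.
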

\begin{proof}
Use the same argument as in the proof of Theorem 4 (2) of \cite{Gamelength}.
\end{proof}

\begin{theorem}\label{twowinsduality}
Let $X$ be a ${\sf T}_{3.5}$ space and let $\alpha$ be an ordinal.  Then the following are equivalent: 
   \begin{enumerate}
   \item{TWO has a winning strategy in the game $\gfin^{\alpha}(\Omega_{\mathbf 0},\Omega_{\mathbf 0})$ on ${\sf C}_p(X)$.}
   \item{TWO has a winning strategy in the game $\gfin^{\alpha}(\Omega,\Omega)$ on $X$.}
   \end{enumerate}
\end{theorem}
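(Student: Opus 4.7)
The plan is to prove the equivalence by transferring winning strategies via the classical Arkhangel'skii--Sakai duality between open $\omega$-covers of the Tychonoff space $X$ and subsets $A \subseteq {\sf C}_p(X)$ witnessing $\mathbf{0} \in \overline{A}$. The direction $(1) \Rightarrow (2)$ is essentially Bella's Theorem 2 of \cite{ABPR}, so the real content is the converse, where Proposition~\ref{typofstrfortwo} will replace the $G_\delta$-closed-set hypothesis used by Bella in the $\alpha = \omega$ case.

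For $(1) \Rightarrow (2)$, fix the Urysohn encoding $\Phi$ assigning to each open $\omega$-cover $\mathcal{U}$ of $X$ the set $\Phi(\mathcal{U}) = \{h_{U, F} : U \in \mathcal{U},\, F \in [U]^{<\omega}\}$, where $h_{U, F} \colon X \to [0, 1]$ is continuous with $h_{U, F}(F) = \{0\}$ and $h_{U, F}(X \setminus U) = \{1\}$ (available by ${\sf T}_{3.5}$). Then $\Phi(\mathcal{U}) \in \Omega_{\mathbf{0}}$. Given TWO's winning strategy $\sigma$ in $\gfin^{\alpha}(\Omega_{\mathbf{0}}, \Omega_{\mathbf{0}})$ on ${\sf C}_p(X)$, I define a strategy for TWO in $\gfin^{\alpha}(\Omega, \Omega)$ on $X$ by feeding $\sigma$ the $\Phi$-images of ONE's $\omega$-covers and pulling back each finite $T_\beta \subseteq \Phi(\mathcal{U}_\beta)$ to $\mathcal{V}_\beta = \{U \in \mathcal{U}_\beta : h_{U, F} \in T_\beta \text{ for some } F\}$. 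If $\bigcup_\beta T_\beta \in \Omega_{\mathbf{0}}$, then the basic neighborhood $[F; 1]$ of $\mathbf{0}$ is met by some $h_{U, F'} \in \bigcup_\beta T_\beta$ with $|h_{U, F'}(F)| < 1$, forcing $F \subseteq U \in \bigcup_\beta \mathcal{V}_\beta$ and so witnessing TWO's win on $X$.

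For $(2) \Rightarrow (1)$, given TWO's winning $\tau$ in $\gfin^{\alpha}(\Omega, \Omega)$ on $X$, I first apply Proposition~\ref{typofstrfortwo} to assume that for every fed sequence of $\omega$-covers and every $\gamma < \alpha$ the tail of $\tau$'s responses past $\gamma$ is itself an $\omega$-cover. Fix a cofinal $\omega$-sequence $\beta_0 < \beta_1 < \cdots$ in $\alpha$ (possible as $\alpha$ is a countable ordinal) and assign the resolution tag $n(\beta) = k+1$ to every inning $\beta \in [\beta_k, \beta_{k+1})$. When ONE plays $A_\beta \in \Omega_{\mathbf{0}}$ in ${\sf C}_p(X)$, encode $A_\beta$ as the $\omega$-cover $\Psi_{n(\beta)}(A_\beta) = \{f^{-1}(-1/n(\beta),\, 1/n(\beta)) : f \in A_\beta\} \setminus \{X\}$, feed it to $\tau$, and pull the finite response $\mathcal{V}_\beta$ back to the finite set $T_\beta \subseteq A_\beta$ of underlying functions. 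Given finite $F \subseteq X$ and $\epsilon > 0$, pick $k$ with $1/(k+1) < \epsilon$; the tail of $\tau$'s responses past $\beta_k$ is an $\omega$-cover and thus contains some $V = f^{-1}(-1/n(\beta),\, 1/n(\beta)) \supseteq F$ with $\beta > \beta_k$, giving $n(\beta) \geq k+1$ and hence $|f(F)| < 1/(k+1) < \epsilon$ for some $f \in \bigcup_\beta T_\beta$, so $\bigcup_\beta T_\beta \in \Omega_{\mathbf{0}}$.

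The main obstacle is ensuring that the encoding $\Psi_{n}(A_\beta)$ genuinely is an $\omega$-cover, even after deleting those $f^{-1}(-1/n,\, 1/n)$ which equal $X$ because $|f|$ is globally $< 1/n$. The $\Omega_{\mathbf{0}}$-membership of $A_\beta$ alone gives some $f \in A_\beta$ with $|f(F)| < 1/n$ for any finite $F$, but that $f$ might be globally small; one must verify that for each finite $F$ some such $f$ is also not globally small, which follows by combining the ${\sf T}_{3.5}$ separation of $X$ with the nontriviality forced by $\mathbf{0} \notin A_\beta$, or if necessary by first passing to a subfamily of $A_\beta$ obtained by Urysohn-multiplying its members (which remains in $\Omega_{\mathbf{0}}$). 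A secondary routine point is fixing deterministic choice functions for the back-translations $h_{U, F} \mapsto U$ and $f^{-1}(-1/n,\,1/n) \mapsto f$ so that the transferred strategies are well-defined on game histories.
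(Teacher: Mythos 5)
Your overall architecture is the same as the paper's. For $(1)\Rightarrow(2)$ you encode pairs $(F,U)$, $F\subseteq U$ finite, by Urysohn functions vanishing on $F$ and equal to $1$ off $U$, feed the resulting elements of $\Omega_{\mathbf 0}$ to TWO's strategy on ${\sf C}_p(X)$, and read off the open sets --- exactly the paper's construction and verification. For $(2)\Rightarrow(1)$ you use Proposition \ref{typofstrfortwo} to arrange that tails of TWO's responses are $\omega$-covers, a cofinal $\omega$-sequence in $\alpha$ to assign shrinking tolerances to the innings, and the sublevel-set encoding $f\mapsto f^{-1}(-1/n,1/n)$; the paper does the same with $\epsilon_n=2^{-(n+1)}$ and a normalization to nonnegative functions. (Both you and the paper tacitly restrict to countable $\alpha$ when the cofinal $\omega$-sequence is chosen.)

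The genuine gap is exactly at the point you flag as ``the main obstacle,'' and your proposed resolution is false. It is not true that for each finite $F$ some $f\in A_\beta$ with $|f|<1/n$ on $F$ can be chosen to be not globally small: take $A_\beta=\{c_{1/m}:m\ge 1\}$, the constant functions with value $1/m$. This set is in $\Omega_{\mathbf 0}$ and omits $\mathbf 0$, yet for any $n$ every member with $m>n$ has $f^{-1}(-1/n,1/n)=X$ and the remaining finitely many have empty preimage, so $\Psi_n(A_\beta)\setminus\{X\}$ is nowhere near an $\omega$-cover; neither ${\sf T}_{3.5}$ nor $\mathbf 0\notin A_\beta$ excludes this. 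The backup of ``Urysohn-multiplying'' the members of $A_\beta$ does not yield a subfamily of $A_\beta$ (so TWO could not legally select from it), and multiplying by $[0,1]$-valued functions only shrinks $|f|$ and cannot cure global smallness. The correct observation is the opposite of what you try to prove: if some $f\in A_\beta$ satisfies $|f|<1/n(\beta)$ everywhere, that single $f$ lies in every basic neighborhood $[H,\epsilon]$ of $\mathbf 0$ with $\epsilon>1/n(\beta)$, so TWO should simply select it and bypass $\tau$ in that inning (feeding $\tau$ a placeholder $\omega$-cover to keep its play coherent); if no such $f$ exists, then no member of $\Psi_{n(\beta)}(A_\beta)$ equals $X$ and the witnesses for each finite $F$ automatically make it a genuine $\omega$-cover. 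The final verification for $[H,\epsilon]$ with $1/(k+1)<\epsilon$ then splits according to whether some inning past $\beta_k$ is of the first kind. The paper packages exactly this reduction by citing Lemma 10 of \cite{COC3}; as written, your argument fails on the example above and this step must be replaced by the dichotomy or by that citation.
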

\begin{proof}
{\flushleft{\underline{(1) implies (2):}}} 
Let $\alpha$ be a countable ordinal and assume that $\Phi$ is a winning strategy of TWO in the game $\gfin^{\alpha}(\Omega_{\mathbf 0},\Omega_{\mathbf 0 })$. Let $\prec_p$ be a well-ordering of ${\sf C}_p(X)$, and let $\prec_2$ be a well-ordering of $\tau$, the topology of $X$. We define a strategy $F$ for TWO in the game $\gfin^{\alpha}(\Omega,\Omega)$ on $X$. The diagram below may be useful in following how the strategy $F$ is defined from the strategy $\Phi$:

\begin{center}
\begin{tabular}{c|l}
\multicolumn{2}{c}{$\gfin^{\alpha}(\Omega,\Omega)$}\\ \hline\hline
\multicolumn{2}{c}{}\\
ONE              & TWO   ($F$)                                  \\ \hline
$\mathcal{U}_0$   &                                       \\
                 &                                         \\
                 & $\mathcal{V}_0 = F(\mathcal{U}_0)$ \\
$\mathcal{U}_1$   &                                       \\
                 &                                         \\
                 & $\mathcal{V}_1 = F(\mathcal{U}_0,\mathcal{U}_1)$ \\
$\vdots$         & $\vdots$ \\
\end{tabular}
\hspace{1in}
\begin{tabular}{c|l}
\multicolumn{2}{c}{$\gfin^{\alpha}(\Omega_{\mathbf 0},\Omega_{\mathbf 0})$}                        \\ \hline\hline
\multicolumn{2}{c}{}\\ 
ONE   & TWO ($\Phi$)                                                \\ \hline
      &                                                     \\
$A_0$ & $B_0 = \Phi(A_0)$                                      \\
      &                                                     \\
      &                                                     \\
$A_1$ & $B_1=\Phi(A_0,A_1)$                                    \\
      &                                                     \\
$\vdots$         & $\vdots$ \\
\end{tabular}
\end{center}

Let $\mathcal{U}_0$ be ONE's first move in $\gfin^{\alpha}(\Omega,\Omega)$. For a finite set $F\subset X$ and an open set $U\subset X$ with $F\subset U$, let $\chi(F,U)$ be the $\prec_2$-first element $f$ of ${\sf C}_p(X)$ such that $f\lceil_{F}\equiv 0$ and $f\lceil_{X\setminus U}\equiv 1$ (which exists as $X$ is completely regular). Define ONE's move in $\gfin^{\alpha}(\Omega_{\mathbf 0},\Omega_{\mathbf 0})$ to be $A_0 = \{\chi(F,U): U\in\mathcal{U}_0 \mbox{ and } F\subset U \mbox{ finite}\}$. Note that indeed $A_0$ is an element of $\Omega_{\mathbf 0}$. From TWO's response $\Phi(A_0)$ we define $F(\mathcal{U}_0)$ as follows: Suppose that $\Phi(A_0)$ is the finite set $\{\chi(F_1,U_1),\cdots,\chi(F_n,U_n)\}$. Define $\mathcal{V}_0 = F(\mathcal{U}_0)$ to be $\{U_1,\cdots,U_n\}$.

Suppose that in inning $\beta<\alpha$ ONE moves $\mathcal{U}_{\beta}$ in the game $\gfin^{\alpha}(\Omega,\Omega)$. As above, compute $A_{\beta} = \{\chi(F,U): U\in\mathcal{U}_{\beta} \mbox{ and }F\subset U \mbox{ finite}\}$.  Apply TWO's strategy $\Phi$ and compute $B_{\beta} = \Phi(A_{\gamma}:\gamma\le \beta)$, a finite subset of $A_{\beta}$, and define the corresponding finite set $\mathcal{V}_{\beta} = F(\mathcal{U}_{\gamma}:\gamma\le \beta)$ as follows: With $B_{\beta} = \{\chi(F_1,U_1),\cdots,\chi(F_n,U_n)\}$, set $\mathcal{V}_{\beta}$ equal to $\{U_1,\cdots, U_n\}$. 

This defines TWO's strategy $F$ in the game $\gfin^{\alpha}(\Omega,\Omega)$. To see that $F$ is a winning strategy for TWO, consider an $F$-play
\[
  \mathcal{U}_0,\, F(\mathcal{U}_0),\cdots,\,\mathcal{U}_{\beta},\, F(\mathcal{U}_{\gamma}:\gamma\le \beta),\cdots, \,\, \beta<\alpha.
\]
Also, consider a fixed finite subset $F$ of $X$. We must show that some element of $\bigcup_{\beta<\alpha}\mathcal{V}_{\beta}$ contains the set $F$.

Consulting the definition of $F$ above we see that this play corresponds to a $\Phi$-play 
\[
 A_0,\, \Phi(A_0),\, \cdots,\, A_{\beta},\, \Phi(A_{\gamma}:\gamma\le \beta),\cdots,\,\,\beta<\alpha
\]
of $\gfin^{\alpha}(\Omega_{\mathbf 0},\Omega_{\mathbf 0})$ where for each $\beta<\alpha$:
\begin{enumerate}
\item{$A_{\beta} = \{\chi(F,U):U\in\mathcal{U}_{\beta} \mbox{ and }F\subset U\mbox{ finite}\}$ and}
\item{With $F(A_{\gamma}:\gamma\le \beta) = \{\chi(F_1,U_1),\cdots,\chi(F_n,U_n)\}$, $F(\mathcal{U}_{\gamma}:\gamma\le\beta) = \{U_1,\cdots,U_n\}$.}
\end{enumerate}
Since $\Phi$ is a winning strategy for TWO, for each $\epsilon>0$ the neighborhood $\lbrack F,\epsilon\rbrack = \{f\in{\sf C}_p(X): (\forall x\in F)(\vert f(x)\vert<\epsilon)\}$ has a nonempty intersection with $\{\Phi(A_{\gamma}:\gamma\le\beta\}):\beta<\alpha\}$. Fix $\epsilon<1$ and choose a $\beta<\alpha$ such that 
\[
  \lbrack F,\epsilon\rbrack \bigcap \Phi(A_{\gamma}:\gamma\le\beta\}) \neq\emptyset,
\]
and choose an element $\chi(F_1,U_1)$ of this set. Note that $U_1$ is an element of $\mathcal{V}_{\beta}$. We claim that $F\subseteq U_1$. For suppose not, and choose $x\in F\setminus U_1$. Then we have $\chi(F_1,U_1)(x) = 1$, while also $\vert \chi(F_1,U_1)(x)\vert <\epsilon<1$, a contradiction. 

This completes the proof that $F$ is a winning strategy for TWO.

{\flushleft{\underline{(2) implies (1):}}} 
Let $F$ be a given winning strategy of TWO in the game $\gfin^{\alpha}(\Omega,\Omega)$. By Proposition \ref{finaddindec}, $\alpha$ is additively indecomposable, and thus is a limit ordinal. As $\alpha$ is countable we can choose ordinals $\gamma_0<\gamma_1<\cdots <\gamma_n<\cdots <\alpha$, $n<\omega$, that converge to $\alpha$. For each $n<\omega$, put $\epsilon_n = \frac{1}{2^{n+1}}$. We may further assume that $F$ has the property stated in Proposition \ref{typofstrfortwo}.

Since $\alpha$ is a countable ordinal $X$ has the property $\sfin(\Omega,\Omega)$, whence by a theorem of Arkhangel'skii \cite{Arkh}, ${\sf C}_p(X)$ has countable tightness. Thus we may assume that each move made by ONE in the game $\gfin^{\alpha}(\Omega,\Omega)$ is a countable subset of ${\sf C}_p(X)$. We may also assume by Lemma 10 of \cite{COC3} that if $A$ is a move by ONE, then each $f\in A$ is pointwise nonnegative, and for $0<\epsilon<1$,  $\{\{x\in X: f(x)<\epsilon\}: f\in A\}$ is an $\omega$-cover of $X$.

We define a strategy $\Phi$ for TWO in the game $\gfin^{\alpha}(\Omega_{\mathbf 0 },\Omega_{\mathbf 0 })$ on $X$. The diagram below may be useful in following how the strategy $\Phi$ is defined from the strategy $F$:

\begin{center}
\begin{tabular}{c|l}
\multicolumn{2}{c}{$\gfin^{\omega}(\Omega_{\mathbf 0 },\Omega_{\mathbf 0 })$}\\ \hline\hline
\multicolumn{2}{c}{}\\
ONE              & TWO   ($\Phi$)                                  \\ \hline
$A_0$   &                                       \\
                 &                                         \\
                 & $B_0 = F(A_0)$ \\
$A_1$   &                                       \\
                 &                                         \\
                 & $B_1 = F(A_0,A_1)$ \\
$\vdots$         & $\vdots$ \\
\end{tabular}
\hspace{1in}
\begin{tabular}{c|l}
\multicolumn{2}{c}{$\gfin^{\omega}(\Omega,\Omega)$}                        \\ \hline\hline
\multicolumn{2}{c}{}\\ 
ONE   & TWO ($F$)                                                \\ \hline
      &                                                     \\
$\mathcal{U}_0$ & $\mathcal{V}_0 = F(\mathcal{U}_0)$                                      \\
      &                                                     \\
      &                                                     \\
$\mathcal{U}_1$ & $\mathcal{V}_1=F(\mathcal{U}_0,\mathcal{U}_1)$                                    \\
      &                                                     \\
$\vdots$         & $\vdots$ \\
\end{tabular}
\end{center}

Let $A_0$ be ONE's first move in $\gfin^{\omega}(\Omega_{\mathbf 0},\Omega_{\mathbf 0})$, and enumerate $A_0$ bijectively as $(f_n:n<\omega)$. For each $n$, define 
\[
  U_n = \{x\in X: f_n(x) <\epsilon_0\}.
\]
Now $\mathcal{U}_0 = \{U_n:n<\omega\}$ is an element of $\Omega$, and thus a legitimate move of ONE in the game $\gfin^{\alpha}(\Omega,\Omega)$. Apply TWO's winning strategy $F$ to find $F(\mathcal{U}_0) = \{U_n:n\in S_0\}$ where $S_0$ is a finite subset of $\omega$, and define $\Phi(A_0) = \{f_n:n\in S_0\}$, which is a legitimate move of TWO in $\gfin^{\alpha}(\Omega_{\mathbf 0},\Omega_{\mathbf 0})$.

Consider an $\beta<\alpha$. Supposing that $A_{\beta}$ is ONE's move in inning $\beta$, enumerate $A_{\beta}$ bijectively as $(f_m:m<\omega)$. Then choose $\psi(\beta)$ to be the least $n<\omega$ such that $\gamma_n\le \beta<\gamma_{n+1}$ and define for each $m$:
\[
  U_m = \{x\in X:\, f_m(x) <\epsilon_{\psi(\beta)}\}.
\]
Define $\mathcal{U}_{\beta}:=\{U_m:m<\omega\}$,  Applying TWO's winning strategy $F$ in $\gfin^{\alpha}(\Omega,\Omega)$ we find a finite subset $S_{\beta}$ of $\omega$ such that  $F(\mathcal{U}_{\nu}:\nu\le \beta) = \{U_m:m\in S_{\beta}\}$. Finally we define
\[
  \Phi(A_{\nu}:\nu\le \beta) = \{f_m:m\in S_{\beta}\}.
\]
This defines a strategy $\Phi$ for TWO in the game $\gfin^{\alpha}(\Omega_{\mathbf 0},\Omega_{\mathbf 0})$. 

We now verify that $\Phi$ is a winning strategy for TWO. To this end consider a $\Phi$-play
\[
  A_0,\, \Phi(A_0),\cdots,\,A_{\beta},\Phi(A_{\nu}:\nu\le \beta), \cdots\,\,\,\beta<\alpha.
\]
We claim that this play is won by TWO. 

Consider the associated $F$-play
\[
  \mathcal{U}_0,\, F(\mathcal{U}_0),\, \cdots,\, \mathcal{U}_{\beta},\, F(\mathcal{U}_{\nu}:\nu\le \beta),\, \cdots\,\,\, \beta<\alpha
\]
where for each $\beta$ we have
\begin{itemize}
\item[(a)]{$\mathcal{U}_\beta = \{\{x\in X:f_m(x)<\epsilon_{\psi(\beta)}\}: m<\omega\}$ and $U^{\beta}_m= \{x\in X: f_m(x)<\epsilon_{\psi(\beta)}\}$}
\item[(b)]{$S_{\beta}\subset \omega$ is a finite set such that $F(\mathcal{U}_{\nu}:\nu\le\beta) = \{U^{\beta}_m:m\in S_{\beta}\}$.}
\item[(c)]{$\Phi(A_{\nu}:\nu\le\beta) = \{f_m:m\in S_{\beta}\}$.}
\end{itemize}

Since this is a winning play for TWO in the game $\gfin^{\alpha}(\Omega,\Omega)$, $\bigcup \{F(\mathcal{U}_{\nu}:\nu\le\beta):\beta<\alpha\}$ is an $\omega$-cover of $X$, and sinceTWO's winning strategy has the property of Proposition \ref{typofstrfortwo}, for each $\gamma<\alpha$ the set $\bigcup\{F(\mathcal{U}_{\nu}:\nu\le \beta): \gamma < \beta<\alpha\}$ is an $\omega$-cover of $X$.

Consider a basic neighborhood of ${\mathbf 0}$, say $\{f\in {\sf C}_p(X): (\forall x\in H)(\vert f(x)\vert<\epsilon\}$ where $H\subset X$ is some finite set and $\epsilon$ is some positive real number. Choose $m$ large enough that $\epsilon_m< \epsilon$. Since $\bigcup \{F(\mathcal{U}_{\nu}:\nu\le \beta): m\le \psi(\beta) \mbox{ and } \beta<\alpha\}$ is an $\omega$-cover of $X$, choose a $\beta>\gamma_m$ and a $U^{\beta}_k \in F(\mathcal{U}_{\nu}:\nu\le\beta)$ with $H\subseteq U^{\beta}_k$. Now $U^{\beta}_k$ is of the form  $\{x\in X:\vert f_k(x) <\epsilon_{\psi(\beta)}\}$. But then as $f_k$ is a non-negative function, for each $x\in U^{\beta}_k$ we also have $f_k(x)<\epsilon_{\psi(\beta)}<\epsilon$, and thus $f_k$, a member of TWO's move in $\gfin^{\alpha}(\Omega_{\mathbf 0},\Omega_{\mathbf 0})$, is a member of the given neighborhood $\{f\in {\sf C}_p(X): (\forall x\in H)(\vert f(x)\vert<\epsilon\}$ of ${\mathbf 0}$.

It follows that TWO won the corresponding $\Phi$-play of $\gfin^{\alpha}(\Omega_{\mathbf 0},\Omega_{\mathbf 0})$.
\end{proof}

Theorem \ref{twowinsduality} has the following corollaries:

\begin{corollary}\label{ordinalvalues}
If $X$ is a ${\sf T}_{3.5}$ space, then 
\[
 {\sf tp}_{fin-\omega}(X) = {\sf tp}_{fin-ft}({\sf C}_p(X))
\]
\end{corollary}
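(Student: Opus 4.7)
The plan is to deduce the corollary directly from Theorem \ref{twowinsduality} together with a homogeneity argument that collapses the supremum defining ${\sf tp}_{fin-ft}({\sf C}_p(X))$ to its value at the single point ${\mathbf 0}$.

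First I would observe that ${\sf C}_p(X)$ is a topological group under pointwise addition, so for every $g\in{\sf C}_p(X)$ the translation map $\tau_g:f\mapsto f-g$ is a self-homeomorphism of ${\sf C}_p(X)$ sending $g$ to ${\mathbf 0}$. Since homeomorphisms preserve closure, $\tau_g$ induces a bijection between $\Omega_g$ and $\Omega_{\mathbf 0}$, which in turn induces a bijection between strategies in the game $\gfin^{\alpha}(\Omega_g,\Omega_g)$ and strategies in $\gfin^{\alpha}(\Omega_{\mathbf 0},\Omega_{\mathbf 0})$, preserving the winning condition. Consequently,
\[
  {\sf tp}_{fin-ft}({\sf C}_p(X),g) \;=\; {\sf tp}_{fin-ft}({\sf C}_p(X),{\mathbf 0})
\]
for every $g\in {\sf C}_p(X)$, and therefore the supremum in the definition of ${\sf tp}_{fin-ft}({\sf C}_p(X))$ is attained (and equal) at ${\mathbf 0}$, i.e.\ ${\sf tp}_{fin-ft}({\sf C}_p(X))={\sf tp}_{fin-ft}({\sf C}_p(X),{\mathbf 0})$.

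Next I would invoke Theorem \ref{twowinsduality}: for every ordinal $\alpha$, TWO has a winning strategy in $\gfin^{\alpha}(\Omega_{\mathbf 0},\Omega_{\mathbf 0})$ on ${\sf C}_p(X)$ if and only if TWO has a winning strategy in $\gfin^{\alpha}(\Omega,\Omega)$ on $X$. Taking the minimum on both sides of this equivalence yields
\[
  {\sf tp}_{fin-ft}({\sf C}_p(X),{\mathbf 0}) \;=\; {\sf tp}_{fin-\omega}(X).
\]
Combining this identity with the previous paragraph delivers the required equality.

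There is no substantive obstacle here; the content of the corollary lies in Theorem \ref{twowinsduality}. The only thing to verify carefully is the (standard) homogeneity step, and in particular that $\tau_g$ transports countable members of $\Omega_g$ to countable members of $\Omega_{\mathbf 0}$ — but this is immediate because $\tau_g$ is a bijective homeomorphism, so it preserves both ``$x$ lies in the closure of $A$'' and cardinality. Once this observation is in place, the corollary falls out of Theorem \ref{twowinsduality} with no further work.
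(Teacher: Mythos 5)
Your proof is correct and is essentially the argument the paper intends: the corollary is stated as an immediate consequence of Theorem \ref{twowinsduality}, and the only detail to supply is precisely the homogeneity of ${\sf C}_p(X)$ as a topological group, which lets the supremum over all points collapse to the value at ${\mathbf 0}$. Your elaboration of that step (translations are homeomorphisms carrying $\Omega_g$ to $\Omega_{\mathbf 0}$ and hence transporting winning strategies) is accurate, and taking minima over $\alpha$ in the equivalence of Theorem \ref{twowinsduality} is legitimate since the set of ordinals $\alpha$ for which TWO has a winning strategy is upward closed.
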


This in turn gives the following generalization of Bella's result for ${\sf T}_{3.5}$-spaces in which each closed set is a ${\sf G}_{\delta}$-set:
\begin{corollary}\label{bellageneralize}
If $X$ is a ${\sf T}_{3.5}$-space, then TWO has a winning strategy in $\gfin^{\omega}(\Omega,\Omega)$ on $X$ if, and only if, TWO has a winning strategy in $\gfin^{\omega}(\Omega_{\mathbf 0},\Omega_{\mathbf 0})$ on ${\sf C}_p(X)$.
\end{corollary}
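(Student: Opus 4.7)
The plan is immediate: Corollary \ref{bellageneralize} is precisely the specialization of Theorem \ref{twowinsduality} to the ordinal length $\alpha = \omega$, and so I would obtain it as a one-line consequence of that theorem. The forward direction (winning strategy for TWO on $X$ yields one on ${\sf C}_p(X)$) is the ``(2) implies (1)'' part of Theorem \ref{twowinsduality} with $\alpha = \omega$, and the reverse direction is the ``(1) implies (2)'' part at $\alpha = \omega$.

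Equivalently, one may deduce the corollary from Corollary \ref{ordinalvalues}: the statement ``TWO has a winning strategy in $\gfin^{\omega}(\Omega,\Omega)$ on $X$'' is exactly ${\sf tp}_{fin-\omega}(X) \le \omega$, while ``TWO has a winning strategy in $\gfin^{\omega}(\Omega_{\mathbf 0},\Omega_{\mathbf 0})$ on ${\sf C}_p(X)$'' is exactly ${\sf tp}_{fin-ft}({\sf C}_p(X)) \le \omega$. Corollary \ref{ordinalvalues} identifies these two ordinal invariants, giving the equivalence at the threshold $\omega$.

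There is no genuine obstacle here; the work has been fully absorbed into the proof of Theorem \ref{twowinsduality}. The point worth highlighting is that in that proof the translation between $X$ and ${\sf C}_p(X)$ goes through the functions $\chi(F,U)$ produced from complete regularity alone, and at no step is it used that closed subsets of $X$ are ${\sf G}_{\delta}$-sets. Consequently, restating the case $\alpha = \omega$ as Corollary \ref{bellageneralize} makes explicit the generalization of Bella's Corollary 2 of \cite{ABPR}, in which that ${\sf G}_{\delta}$-hypothesis is now seen to be unnecessary.
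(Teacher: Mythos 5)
Your proposal matches the paper exactly: Corollary \ref{bellageneralize} is stated there with no separate proof, being precisely the case $\alpha=\omega$ of Theorem \ref{twowinsduality} (equivalently, read off from Corollary \ref{ordinalvalues}), and your remark that the $\chi(F,U)$ construction uses only complete regularity --- so that Bella's ${\sf G}_{\delta}$ hypothesis is superfluous --- is the very point the paper is making. Nothing is missing.
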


The corresponding theorem for the games $\gone^{\alpha}(\Omega,\Omega)$ and $\gone^{\alpha}(\Omega_{\mathbf 0},\Omega_{\mathbf 0})$ is as follows:

\begin{theorem}\label{twowinsdualityg1}
Let $X$ be a ${\sf T}_{3.5}$ space and let $\alpha$ be an ordinal.  Then the following are equivalent: 
   \begin{enumerate}
   \item{TWO has a winning strategy in the game $\gone^{\alpha}(\Omega_{\mathbf 0},\Omega_{\mathbf 0})$ on ${\sf C}_p(X)$.}
   \item{TWO has a winning strategy in the game $\gone^{\alpha}(\Omega,\Omega)$ on $X$.}
   \end{enumerate}
\end{theorem}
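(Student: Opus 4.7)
The plan is to run the same two-directional translation as in the proof of Theorem \ref{twowinsduality}, but with every ``finite subset'' replaced by ``a single element''. The same diagram of two side-by-side games will guide the construction, and most of the bookkeeping carries over verbatim; the only real novelty is packaging TWO's single move on one side as a single move on the other.

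For $(1)\Rightarrow(2)$, fix a winning strategy $\Phi$ for TWO in $\gone^{\alpha}(\Omega_{\mathbf 0},\Omega_{\mathbf 0})$ on ${\sf C}_p(X)$, together with a well-ordering of the topology $\tau$ so that the map $\chi(F,U)$ is well-defined for finite $F\subset U\in\tau$. Given a play of $\gone^{\alpha}(\Omega,\Omega)$ on $X$ in which ONE has just moved $\mathcal{U}_\beta$, form, just as before, $A_\beta=\{\chi(F,U):U\in\mathcal{U}_\beta,\ F\subset U\ \text{finite}\}\in\Omega_{\mathbf 0}$, apply $\Phi$ to obtain a single element $\chi(F^{\beta},U^{\beta})=\Phi(A_\nu:\nu\le\beta)$, and let TWO's strategy $F$ return the single open set $U^{\beta}$. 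To see that $F$ wins, fix a finite $H\subset X$; since $\{\chi(F^{\beta},U^{\beta}):\beta<\alpha\}\in\Omega_{\mathbf 0}$, for $\epsilon<1$ the neighborhood $[H,\epsilon]$ contains some $\chi(F^{\beta},U^{\beta})$, and the usual argument (any $x\in H\setminus U^{\beta}$ would force $\chi(F^{\beta},U^{\beta})(x)=1<\epsilon$) shows $H\subseteq U^{\beta}\in F(\mathcal{U}_\nu:\nu\le\beta)$, so $\{F(\mathcal{U}_\nu:\nu\le\beta):\beta<\alpha\}\in\Omega$.

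For $(2)\Rightarrow(1)$, first note the ${\sf G}_1$-analogue of Proposition \ref{finaddindec}: if TWO has a winning strategy in $\gone^{\alpha}(\Omega,\Omega)$ for some minimal infinite $\alpha$, then $\alpha$ is additively indecomposable. The proof is the same decomposition argument, witnessing failure by a single point rather than a finite set. Similarly, the ${\sf G}_1$-analogue of Proposition \ref{typofstrfortwo} holds by the same modification: TWO has a winning strategy $F$ in $\gone^{\alpha}(\Omega,\Omega)$ such that for every $\gamma<\alpha$ the tail $\{F(\mathcal{U}_\nu:\nu\le\rho):\gamma<\rho<\alpha\}$ remains an $\omega$-cover. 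Now pick $\gamma_0<\gamma_1<\cdots<\alpha$ converging to $\alpha$ and $\epsilon_n=2^{-(n+1)}$. Using Arkhangel'skii's theorem and Lemma 10 of \cite{COC3} exactly as in Theorem \ref{twowinsduality}, we may assume every move $A_\beta$ of ONE in $\gone^{\alpha}(\Omega_{\mathbf 0},\Omega_{\mathbf 0})$ is a countable set of nonnegative functions $(f_m:m<\omega)$ with $\{\{x\in X:f_m(x)<\epsilon\}:m<\omega\}$ an $\omega$-cover for each $\epsilon\in(0,1)$.

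Define $\Phi$ as follows. With $\psi(\beta)$ the unique $n$ with $\gamma_n\le\beta<\gamma_{n+1}$, set $U^{\beta}_m=\{x:f_m(x)<\epsilon_{\psi(\beta)}\}$ and $\mathcal{U}_\beta=\{U^{\beta}_m:m<\omega\}\in\Omega$. Apply $F$ to obtain the single open set $F(\mathcal{U}_\nu:\nu\le\beta)=U^{\beta}_{k_\beta}$, and let $\Phi(A_\nu:\nu\le\beta)=f_{k_\beta}$. To verify $\Phi$ wins, fix a basic neighborhood $\{f:(\forall x\in H)|f(x)|<\epsilon\}$ of $\mathbf 0$; choose $m$ with $\epsilon_m<\epsilon$, then by the tail property of $F$ find $\beta>\gamma_m$ with $H\subseteq U^{\beta}_{k_\beta}$, so that $f_{k_\beta}(x)<\epsilon_{\psi(\beta)}\le\epsilon_m<\epsilon$ on $H$, placing $f_{k_\beta}\in\Phi(A_\nu:\nu\le\beta)$ in the neighborhood. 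Thus $\{\Phi(A_\nu:\nu\le\beta):\beta<\alpha\}\in\Omega_{\mathbf 0}$.

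The principal obstacle, minor though it is, is checking that the ${\sf G}_1$-versions of Propositions \ref{finaddindec} and \ref{typofstrfortwo} genuinely go through; once those are in hand, the translation between the two games in both directions proceeds exactly as in Theorem \ref{twowinsduality}, merely reading ``single move of TWO'' in place of ``finite move of TWO''.
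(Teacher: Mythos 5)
Your proposal is correct and is essentially the route the paper takes: the paper handles $(1)\Rightarrow(2)$ exactly as you do, \emph{mutatis mutandis} from Theorem \ref{twowinsduality}, and for $(2)\Rightarrow(1)$ it cites Theorem 9 of \cite{Gamelength}, whose argument is the same single-selection translation you write out (including the reliance on the ${\sf G}_1$-versions of Propositions \ref{finaddindec} and \ref{typofstrfortwo}, which are Theorem 4 of \cite{Gamelength}). The only caveat you inherit from the paper itself is the implicit reduction to the minimal (hence additively indecomposable, cofinality-$\omega$) length $\alpha$ before choosing the sequence $\gamma_0<\gamma_1<\cdots$ converging to $\alpha$.
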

\begin{proof}
This equivalence was proven for $X$ a set of real numbers  in Theorem 9 of \cite{Gamelength}. The proof of (2) $\Rightarrow$ (1) proceeds as in the proof of Theorem 9 (1) of \cite{Gamelength}. For the proof of (1) $\Rightarrow$ (2) we proceed \emph{mutatis mutandis} as in proof of the corresponding implication in Theorem \ref{twowinsduality}.
\end{proof}
Theorem \ref{twowinsdualityg1} generalizes the corresponding part of Theorem 9 of \cite{Gamelength}, where it was proven if $X$ is a separable metric space.
\section{Selective versions of separability}

For a topological space $X$ we already introduced the notation $ \mathfrak{D} = \{D\subseteq X: \, D \mbox{ is dense in }X\}$ in Section 3. The selection principles $\sone(\mathfrak{D},\mathfrak{D})$ and $\sfin(\mathfrak{D},\mathfrak{D})$ were introduced in \cite{COC6}, and have been extensively studied since. For a topological space each of these selection properties implies that each dense subset of the space has a countable subset that is dense in the space. In particular this implies that the space has only countably many isolated points. In this section we make the assumption that the considered spaces are at least ${\sf T}_3$ and that each dense subset has a countable subset still dense in the space.

It was shown in Proposition 19 of \cite{COC6} that HFD's satisfy $\sone(\mathfrak{D},\mathfrak{D})$. This prior result follows from Theorem \ref{twowinshfdtightness} on account of the following\footnote{This result is well-known. A proof is included for the reader's convenience.}: 

\begin{lemma}\label{fantightsep} If $X$ has countable (strong) fan tightness at each of its elements, and if $X$ is separable, then $X$ has the properties $\sfin(\mathfrak{D},\mathfrak{D})$ (respectively, $\sone(\mathfrak{D},\mathfrak{D})$).
\end{lemma}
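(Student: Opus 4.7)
Let $X$ be separable with countable (strong) fan tightness at each point. Fix a countable dense set $S \subseteq X$, and let $I$ denote the set of isolated points of $X$. By the standing assumption of the section, every dense subset of $X$ contains $I$, and hence $I$ itself is countable. Write $S \setminus I = \{x_k : k \in \omega\}$ (handling the cases where this set is finite or empty separately, as they are trivial). Given a sequence $(D_n : n \in \omega)$ of dense subsets of $X$, the plan is to distribute the index set $\omega$ among the points of $S$ and then invoke the tightness hypothesis at each non-isolated point of $S$.

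Partition $\omega$ into pairwise disjoint infinite sets $B \sqcup \bigsqcup_{k \in \omega} A_k$, enumerating $A_k = \{a^k_j : j \in \omega\}$. The set $B$ will be used to cover the isolated points: pick a surjection $\varphi : B \twoheadrightarrow I$ (assuming $I \neq \emptyset$; otherwise omit $B$ entirely), and for each $n \in B$ select $\varphi(n) \in D_n$, which is legal since $I \subseteq D_n$. For each $k$, the point $x_k$ is non-isolated, so for each $n$ the set $D_n \setminus \{x_k\}$ is still dense and lies in $\Omega_{x_k}$. Applying $\sone(\Omega_{x_k},\Omega_{x_k})$ (respectively $\sfin(\Omega_{x_k},\Omega_{x_k})$) to the sequence $(D_{a^k_j} \setminus \{x_k\} : j \in \omega)$ yields points $y^k_j \in D_{a^k_j} \setminus \{x_k\}$ (respectively finite sets $F^k_j \subseteq D_{a^k_j} \setminus \{x_k\}$) such that $x_k \in \overline{\{y^k_j : j \in \omega\}}$ (respectively $x_k \in \overline{\bigcup_j F^k_j}$).

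Let $Y$ denote the union of all the selected points (or finite subsets). To verify that $Y$ is dense, let $U \subseteq X$ be nonempty open. Density of $S$ gives some $s \in U \cap S$. If $s \in I$, then $s$ is among the $\varphi(n)$ chosen for $n \in B$, so $s \in Y \cap U$. Otherwise $s = x_k$ for some $k$, and the relation $x_k \in \overline{\{y^k_j\}}$ (or $x_k \in \overline{\bigcup_j F^k_j}$) forces $U$ to meet the $k$-th stream of the selection. In either case $Y \cap U \neq \emptyset$.

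The only subtlety is the handling of isolated points, which must be included in every dense selection but cannot be extracted from $\Omega_{x_k}$ since $\Omega_{x_k}$ excludes points that equal $x_k$; reserving a separate infinite block $B$ of indices for this purpose, and exploiting the fact that isolated points automatically lie in every $D_n$, deals with this cleanly. Everything else is a routine partition-and-diagonalize argument, with the tightness axiom applied countably many times, once at each non-isolated point of the chosen countable dense subset.
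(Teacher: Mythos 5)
Your proposal is correct and follows essentially the same route as the paper's proof: fix a countable dense set, partition the index set $\omega$ into infinitely many infinite blocks, apply $\sone(\Omega_x,\Omega_x)$ (resp.\ $\sfin(\Omega_x,\Omega_x)$) at each point of the dense set along its block, and observe that the resulting selection is dense. Your extra care with isolated points and with replacing $D_n$ by $D_n\setminus\{x_k\}$ so that the sets genuinely lie in $\Omega_{x_k}$ tidies up details the paper's argument glosses over, but it is the same partition-and-diagonalize argument.
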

$\Proof$ Let $D$ be a fixed countable dense subset of $X$, enumerated bijectively as $(d_n:n<\omega)$. Let $(D_n:n<\omega)$ be a sequence of dense subsets of $X$. Write $\omega=\bigcup_{m\in\omega}S_m$ where each $S_m$ is infinite and for $m<n$, $S_m\cap S_n=\emptyset$. Now for each $m$ we have $x_m\in\overline{D}_n$, $n\in S_m$. If $X$ has countable strong fan tightness at each of its elements, choose for $n\in S_m$ an $x_n\in D_n$ such that $x\in\overline{\{x_n:n\in S_m\}}$, using countable strong fan tightness at each point. But then $\{x_n:n\in\omega\}$ is dense, witnessing $\sone(\mathfrak{D},\mathfrak{D})$. The argument for countable fan tightness is similar. $\Box$

\begin{corollary}\label{septight} If $X$ is a separable space which is countably tight at each element and if $\kappa$ is an uncountable cardinal, then
\[
  \mathbf{1}_{\cohen(\kappa)}\forces ``\check{X} \mbox{ satisfies } \sone(\mathfrak{D},\mathfrak{D})".
\]
\end{corollary}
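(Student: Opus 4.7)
The plan is to assemble three facts already established: separability of $\check X$ is preserved by $\cohen(\kappa)$, countable strong fan tightness of $\check X$ holds in the extension, and Lemma \ref{fantightsep} converts these into $\sone(\mathfrak{D},\mathfrak{D})$.

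First I would observe that if $D\subseteq X$ is a countable dense subset in the ground model, then $\check D$ remains a countable dense subset of $\check X$ in the generic extension. This is because density is expressed by the absolute statement ``$D\cap U\neq\emptyset$ for every nonempty basic open $U$ of the ground model topology on $X$,'' and the basic open sets of $\check X$ in $V[G]$ are exactly the $\check U$ for $U$ in the ground model topology. So $\mathbf{1}_{\cohen(\kappa)}\forces``\check X\text{ is separable}.''$

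Next, I would invoke Theorem \ref{cohenrealsconvertcountablytight}, applied pointwise: for every $x\in X$,
\[
  \mathbf{1}_{\cohen(\kappa)}\forces``\text{ONE has no winning strategy in }\gone^{\omega}(\Omega_{\check x},\Omega_{\check x})\text{ on }\check X.''
\]
As noted in the opening paragraph of Section 2, this strategic property is strictly stronger than countable strong fan tightness at $\check x$; thus in $V[G]$ the space $\check X$ has $\sone(\Omega_{\check x},\Omega_{\check x})$ at each of its points.

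Finally, working inside $V[G]$, I would apply Lemma \ref{fantightsep} to $\check X$: a separable space with countable strong fan tightness at every point satisfies $\sone(\mathfrak{D},\mathfrak{D})$. This gives exactly the conclusion of the corollary. There is no real obstacle; the only subtlety worth mentioning is that in the lemma's application we allow the sequence $(D_n:n<\omega)$ of dense sets to lie in $V[G]$ (not only in $V$), which is fine because countable strong fan tightness of $\check X$ at each point has been established in $V[G]$, and the countable dense set used to drive the argument is the ground-model set $\check D$, which remains dense by the first step.
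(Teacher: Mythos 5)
Your proof is correct and follows essentially the same route as the paper's: preserve separability, use Theorem \ref{cohenrealsconvertcountablytight} to get that ONE has no winning strategy in $\gone^{\omega}(\Omega_x,\Omega_x)$ (hence countable strong fan tightness) at each point in the extension, and then apply Lemma \ref{fantightsep} inside the extension. (The paper's one-line proof cites Theorem \ref{twowinshfdtightness}, the HFD result, but this appears to be a misreference for Theorem \ref{cohenrealsconvertcountablytight}, which is exactly the theorem you invoke.)
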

\begin{proof} Use Theorem \ref{twowinshfdtightness}, Lemma \ref{fantightsep}, and the fact that separability is preserved by any forcing.
\end{proof}

\begin{corollary}\label{heredsep} If $X$ is a hereditarily separable space and $\kappa$ is an uncountable cardinal, then
\[
  \mathbf{1}_{\cohen(\kappa)}\forces ``\check{X} \mbox{ satisfies } \sone(\mathfrak{D},\mathfrak{D})".
\]
\end{corollary}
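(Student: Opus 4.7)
The plan is to observe that this corollary is an immediate consequence of Corollary \ref{septight} together with the elementary properties of hereditary separability. Any hereditarily separable space $X$ is, in particular, separable (take $A = X$ in the definition), and by Lemma \ref{HSistight} it is countably tight at each of its points. Thus $X$ satisfies the hypotheses of Corollary \ref{septight}.

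Therefore the proof amounts to the single step: apply Corollary \ref{septight} to $X$ to conclude that $\mathbf{1}_{\cohen(\kappa)} \forces ``\check{X} \mbox{ satisfies } \sone(\mathfrak{D},\mathfrak{D})$.''

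There is no real obstacle, since both inputs are already in place: the separability and countable tightness of $X$ in the ground model are preserved or upgraded by Cohen forcing — separability is preserved by any forcing, and countable tightness at each point is preserved by $\cohen(\kappa)$ via Dow's Lemma \ref{tightnesslemma}, which is exactly what is being used inside Corollary \ref{septight} (through Theorem \ref{cohenrealsconvertcountablytight} and Lemma \ref{fantightsep}). So the content of this corollary is simply the packaging of hereditary separability as the conjunction ``separable $+$ countably tight,'' which is precisely what Corollary \ref{septight} consumes.
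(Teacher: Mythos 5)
Your proof is correct and is essentially identical to the paper's own argument: the paper likewise invokes Lemma \ref{HSistight} to get countable tightness at each point and then applies Corollary \ref{septight}, with the separability of $X$ being immediate from hereditary separability. No further comment is needed.
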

\begin{proof} By Lemma \ref{HSistight} a hereditarily separable space has countable tightness at each of its elements. Use Corollary  \ref{septight}.
\end{proof}

By the technique used in the proof of Theorem \ref{cohenrealsconvertcountablytight} each of these corollaries can be strengthened to:
\begin{proposition}\label{onenotwinning} If $X$ is a separable space which is countably tight at each element and if $\kappa$ is an uncountable cardinal, then
\[
  \mathbf{1}_{\cohen(\kappa)}\forces ``\mbox{ONE has no winning strategy in }\gone^{\omega}(\mathfrak{D},\mathfrak{D}) \mbox{ on }\check{X}."
\]
\end{proposition}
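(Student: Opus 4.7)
The plan is to mirror, almost verbatim, the proof of Theorem \ref{cohenrealsconvertcountablytight}, replacing $\Omega_x$ by $\mathfrak{D}$ and the role of ``meeting every neighborhood of $x$'' by ``meeting every nonempty open set of $X$''. The key input that replaces Dow's Lemma \ref{tightnesslemma} will be Corollary \ref{septight}: in the $\cohen(\kappa)$-extension, $X$ satisfies $\sone(\mathfrak{D},\mathfrak{D})$, and so in particular every dense subset of $X$ (in the extension) has a countable dense subset. This is exactly what is needed to justify replacing ONE's (possibly huge) moves by countable dense subsets.

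Concretely, I would begin with a $\cohen(\kappa)$-name $\dot{\sigma}$ forced to be a strategy for ONE in $\gone^{\omega}(\mathfrak{D},\mathfrak{D})$, and recursively choose, for each finite sequence $\tau\in{}^{<\omega}\omega$, a $\cohen(\kappa)$-name $\dot{y}_{\tau}$ for a point of $X$ such that it is forced that, at each node $\sigma$ of length $n$, the set $\{\dot{y}_{\sigma\frown(n,k)}:k<\omega\}$ is a countable dense subset of $\dot{\sigma}(\dot{y}_{\sigma\lceil_1},\dots,\dot{y}_{\sigma})$; this is exactly where Corollary \ref{septight} is used. Since $\cohen(\kappa)$ is ccc and there are only countably many such names, they all live in some intermediate $\cohen(\alpha)$ for an $\alpha<\kappa$. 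Factoring $\cohen(\kappa)\cong\cohen(\alpha)*\cohen([\alpha,\kappa))$ and passing to $V_{\alpha}:=V^{\cohen(\alpha)}$, the $y_{\tau}$'s become actual elements of $X$.

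Now, working in $V_{\alpha}$, for each nonempty open $V\subseteq X$ define
\[
  F_V=\{f\in{}^{\omega}\omega:(\forall k)(y_{f\lceil_{k}}\notin V)\}.
\]
Each $F_V$ is nowhere dense in ${}^{\omega}\omega$: given any $s\in{}^{<\omega}\omega$, the countable set $\{y_{s\frown(|s|,k)}:k<\omega\}$ is dense in $X$, so some $k_0$ satisfies $y_{s\frown(|s|,k_0)}\in V$, whence $[s\frown(|s|,k_0)]\cap F_V=\emptyset$. A Cohen real $f\in{}^{\omega}\omega$ over $V_{\alpha}$ (which exists in the further extension by $\cohen([\alpha,\kappa))$) therefore avoids every $F_V$, so $\{y_{f\lceil_{k}}:k<\omega\}$ meets every nonempty open subset of $X$ and is thus dense. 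The resulting play against $\dot{\sigma}$ is won by TWO, showing $\dot{\sigma}$ is not a winning strategy.

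The one point that deserves care, and that I take to be the only real obstacle, is the legitimacy of the simultaneous avoidance of \emph{all} the $F_V$'s: there is no countable base to appeal to, and the family $\{F_V:V\text{ nonempty open}\}$ can be large. This is handled by the standard fact that a Cohen real over $V_{\alpha}$ lies outside every nowhere dense subset of ${}^{\omega}\omega$ that is coded in $V_{\alpha}$, regardless of how many such sets there are; since the topology of $X$ and the family $(y_{\tau})_{\tau}$ both lie in $V_{\alpha}$, every $F_V$ is such a set. With this observation the argument goes through exactly as in Theorem \ref{cohenrealsconvertcountablytight}.
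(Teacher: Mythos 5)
Your proposal is correct and follows essentially the same route the paper intends: the paper's proof of Proposition \ref{onenotwinning} is just the instruction to rerun the argument of Theorem \ref{cohenrealsconvertcountablytight} with $\Omega_x$ replaced by $\mathfrak{D}$, using the preservation of ``every dense set has a countable dense subset'' in place of Dow's Lemma \ref{tightnesslemma}, and you have filled in exactly those steps, including the correct observation that density need only be tested against ground-model open sets and that the Cohen real over the intermediate model avoids all nowhere dense sets coded there.
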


In Theorem 13 of \cite{COC6} we established duality results for player ONE of $\gone^{\omega}(\mathfrak{D},\mathfrak{D})$ and player ONE of $\gone^{\omega}(\Omega,\Omega)$, and in Theorem 35 of \cite{COC6} we established the corresponding results for player ONE of $\gfin^{\omega}(\mathfrak{D},\mathfrak{D})$ and of $\gfin^{\omega}(\Omega,\Omega)$. 
For player TWO results are as follows: Extending Section 3 of \cite{Gamelength} we introduce the following ordinal function for a topological space $X$\footnote{Caution: In Section 3 of \cite{Gamelength} the symbol ${\sf tp}_d(X)$ denotes the least ordinal $\alpha$ such that TWO has a winning strategy in the game $\gone^{\alpha}(\mathfrak{D},\mathfrak{D})$ on ${\sf C}_p(X)$.}:
\[
  {\sf tp}_d(X) = \min\{\alpha:\,\mbox{ TWO has a winning strategy in the game }\gone^{\alpha}(\mathfrak{D},\mathfrak{D})\}.
\]
Note that ${\sf tp}_d(X)$ is well-defined since it is no larger than $\pi(X)$, the $\pi$-weight of $X$. Now also define 
\[
  {\sf tp}_{d-fin}(X) = \min\{\alpha:\,\mbox{ TWO has a winning strategy in the game }\gfin^{\alpha}(\mathfrak{D},\mathfrak{D})\}.
\]
It is evident that for a space $X$ we have ${\sf tp}_{d-fin}(X)\le {\sf tp}_{d}(X)$. In Lemma 10 of \cite{COC6} it was observed that if $X$ is a separable metric space, then each dense subset of ${\sf C}_p(X)$ has a countable subset that is still dense in ${\sf C}_p(X)$. In Theorem 9 of \cite{Gamelength} it was shown that
\begin{theorem}\label{metricsonetp} If $X$ is a separable metric space, then: If any of the ordinals ${\sf tp}_{\omega}(X)$, ${\sf tp}_{sf}({\sf C}_p(X))$ and ${\sf tp}_{d}({\sf C}_p(X))$ is countable, so are the others, and
\[
  {\sf tp}_{\omega}(X) = {\sf tp}_{sf}({\sf C}_p(X)) = {\sf tp}_{d}({\sf C}_p(X)).
\]
\end{theorem}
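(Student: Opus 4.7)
The plan is to establish the three-way equality by chaining two dualities of the form ``TWO has a winning strategy in a selection game on ${\sf C}_p(X)$ if and only if TWO has a winning strategy in the corresponding game on $X$.'' The equality ${\sf tp}_{\omega}(X) = {\sf tp}_{sf}({\sf C}_p(X))$ is immediate from Theorem \ref{twowinsdualityg1}, since every separable metric space is ${\sf T}_{3.5}$: that theorem identifies, for each ordinal $\alpha$, the set of ordinals at which TWO wins $\gone^{\alpha}(\Omega_{\mathbf 0},\Omega_{\mathbf 0})$ on ${\sf C}_p(X)$ with the set at which TWO wins $\gone^{\alpha}(\Omega,\Omega)$ on $X$, so the corresponding minima coincide. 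What remains is to prove ${\sf tp}_{\omega}(X) = {\sf tp}_d({\sf C}_p(X))$; the countability clause then follows automatically, because any of the three ordinals is countable iff the other two are.

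For the direction ``TWO wins $\gone^{\alpha}(\Omega,\Omega)$ on $X$ $\Rightarrow$ TWO wins $\gone^{\alpha}(\mathfrak{D},\mathfrak{D})$ on ${\sf C}_p(X)$,'' I would mimic the encoding from the proof of $(2)\Rightarrow(1)$ of Theorem \ref{twowinsduality}. Fix a cofinal increasing sequence $\gamma_n \to \alpha$ and the scalars $\epsilon_n = 2^{-n-1}$. Given ONE's move $D_{\beta}\in\mathfrak{D}$, one may assume (as in Lemma 10 of \cite{COC3}) that the elements of $D_{\beta}$ are pointwise nonnegative and, enumerating $D_{\beta} = (f_m:m<\omega)$, translate $D_{\beta}$ into the $\omega$-cover $\mathcal{U}_{\beta} = \{\{x:f_m(x)<\epsilon_{\psi(\beta)}\}:m<\omega\}$, where $\psi(\beta)$ is the unique $n$ with $\gamma_n\le \beta<\gamma_{n+1}$. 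Feed $\mathcal{U}_\beta$ to TWO's strategy $F$ in the $\Omega$-game; when $F$ selects $U_{f_\beta}$, the $\mathfrak{D}$-game response is $f_\beta$. To verify density of $\{f_\beta:\beta<\alpha\}$ in ${\sf C}_p(X)$, one argues exactly as at the end of Theorem \ref{twowinsduality}, using the analogue of Proposition \ref{typofstrfortwo} to guarantee the $\omega$-cover union cofinally in $\alpha$, and the fact that each $f_\beta$ is nonnegative to translate ``$f_\beta <\epsilon_{\psi(\beta)}$ on $F$'' into ``$f_\beta\in [F,\epsilon]$'' once $\psi(\beta)$ is large enough.

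The reverse direction ``TWO wins $\gone^{\alpha}(\mathfrak{D},\mathfrak{D})$ on ${\sf C}_p(X)$ $\Rightarrow$ TWO wins $\gone^{\alpha}(\Omega,\Omega)$ on $X$'' is more delicate, and will be the main obstacle. Here I would use the $\chi(F,U)$ construction of Theorem \ref{twowinsduality}: given ONE's move $\mathcal{U}_\beta$ (an $\omega$-cover of $X$), form the dense set $D_\beta = \{\chi(F,U):U\in\mathcal{U}_\beta,\ F\subset U\text{ finite}\}\subseteq {\sf C}_p(X)$ (density uses $\omega$-cover-ness plus complete regularity). Feeding $D_\beta$ to TWO's winning $\mathfrak{D}$-strategy $\Phi$ yields some $\chi(F_\beta,U_\beta)$, and we respond in the $\Omega$-game with $U_\beta$. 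The nontrivial point is that $\bigcup_\beta\{U_\beta\}$ is then an $\omega$-cover of $X$: given a finite $H\subset X$, the neighborhood $[H,1/2]$ of $\mathbf{0}$ must meet $\{\chi(F_\beta,U_\beta)\}$ cofinally (by density and the analogue of Proposition \ref{typofstrfortwo}), and any $\chi(F_\beta,U_\beta)\in [H,1/2]$ forces $H\subseteq U_\beta$, because $\chi(F_\beta,U_\beta)$ takes value $1$ outside $U_\beta$. The chief technical burden is verifying that $\Phi$ can be assumed cofinally-winning across $\alpha$-length plays, which is where additive indecomposability (Proposition \ref{sfanaddindec}, adapted to $\mathfrak{D}$) and the cofinal refinement argument of Proposition \ref{typofstrfortwo} are essential.
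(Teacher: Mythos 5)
Your first step is sound: Theorem \ref{twowinsdualityg1}, together with the homogeneity of the topological group ${\sf C}_p(X)$, does give ${\sf tp}_{\omega}(X)={\sf tp}_{sf}({\sf C}_p(X))$ for separable metric (indeed any ${\sf T}_{3.5}$) $X$; note that the paper itself does not reprove the statement under review but quotes Theorem 9 of \cite{Gamelength}. The genuine gap is in your direction ``TWO wins $\gone^{\alpha}(\Omega,\Omega)$ on $X$ $\Rightarrow$ TWO wins $\gone^{\alpha}(\mathfrak{D},\mathfrak{D})$ on ${\sf C}_p(X)$.'' First, a dense subset of ${\sf C}_p(X)$ cannot be assumed to consist of nonnegative functions (such a set misses every neighborhood of the constant function $-1$); Lemma 10 of \cite{COC3} applies to elements of $\Omega_{\mathbf 0}$, not of $\mathfrak{D}$. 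More seriously, even after repairing this with absolute values, your TWO selects from each $D_{\beta}$ a function $f_{\beta}$ that is small on some finite set, and the verification you invoke (the end of the proof of Theorem \ref{twowinsduality}) shows only that $\{f_{\beta}:\beta<\alpha\}$ meets every basic neighborhood $\lbrack H,\epsilon\rbrack$ of $\mathbf 0$. That proves $\{f_{\beta}:\beta<\alpha\}\in\Omega_{\mathbf 0}$, i.e.\ that TWO wins $\gone^{\alpha}(\mathfrak{D},\Omega_{\mathbf 0})$; it comes nowhere near density, since the selected set need not approach, say, the constant function $1$. The correct route is that of Theorem \ref{fansepgame}(1): ${\sf C}_p(X)$ is separable and homogeneous, so TWO wins $\gone^{\alpha}(\Omega_{d},\Omega_{d})$ at each point $d$ of a countable dense set; one partitions $\alpha$ (additively indecomposable by Proposition \ref{sfanaddindec}) into $\omega$ pieces of order type $\alpha$ and interleaves these local strategies so that TWO's selections cluster at every $d_n$ and hence form a dense set. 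This yields ${\sf tp}_d({\sf C}_p(X))\le{\sf tp}_{sf}({\sf C}_p(X))$.

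Your reverse direction also has a flaw, though a repairable one. The set $D_{\beta}=\{\chi(F,U):U\in\mathcal{U}_{\beta},\ F\subset U\ \mbox{finite}\}$ is an element of $\Omega_{\mathbf 0}$ but is not dense in ${\sf C}_p(X)$: its members vanish on the sets $F$ and take values in $\lbrack 0,1\rbrack$, so none approximates a function taking value $5$ at a point. Hence $D_{\beta}$ is not a legal move for ONE in $\gone^{\alpha}(\mathfrak{D},\mathfrak{D})$ and $\Phi$ cannot be applied to it. One must instead use the richer family $E(\mathcal{U}_{\beta})$ of functions $\chi_{U,q_1,\dots,q_k}$ from the proof of Theorem \ref{twodensitygame}, which prescribe varying values on the components of $U$ and vanish off $U$; the extraction of an $\omega$-cover from TWO's responses then tests density against a neighborhood of a function bounded away from $0$ on the finite set $H$ (rather than against $\lbrack H,1/2\rbrack$), which forces $H\subseteq U$. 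With these two repairs the chain ${\sf tp}_{\omega}(X)={\sf tp}_{sf}({\sf C}_p(X))\ge{\sf tp}_d({\sf C}_p(X))\ge{\sf tp}_{\omega}(X)$ closes, which is exactly the argument of Theorem 9 of \cite{Gamelength}.
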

It follows from Theorem 4 of \cite{Gamelength} that for an infinite separable metric space $X$, ${\sf tp}_d({\sf C}_p(X))$ is an additively indecomposable ordinal.
It also follows for separable metric spaces $X$ that if any of these three ordinals is $\omega_1$, then so are the others. 
We now proceed to establish corresponding facts for the game $\gfin^{\alpha}(\mathfrak{D},\mathfrak{D})$.

\begin{theorem}\label{twodensitygame} Let $X$ be an infinite separable metric space and let $\alpha$ be an ordinal. The following are equivalent:
\begin{enumerate}
\item{TWO has a winning strategy in $\gfin^{\alpha}(\mathfrak{D},\mathfrak{D})$ on ${\sf C}_p(X)$.}
\item{TWO has a winning strategy in $\gfin^{\alpha}(\Omega,\Omega)$ on $X$.}
\end{enumerate}
\end{theorem}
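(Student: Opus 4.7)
The plan is to mirror the two-step structure of the proof of Theorem \ref{twowinsduality} while arranging for TWO's moves to have a \emph{dense} union in ${\sf C}_p(X)$ rather than just a union in $\Omega_{\mathbf 0}$. The fact that $X$ is separable metric enters the argument twice: it forces ${\sf C}_p(X)$ to have a countable network (hence to be separable), and it makes $X$ normal, so Tietze extension is available.

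For $(1)\Rightarrow(2)$, given a winning strategy $\Phi$ for TWO in $\gfin^{\alpha}(\mathfrak{D},\mathfrak{D})$ on ${\sf C}_p(X)$, I would define, for each $\omega$-cover $\mathcal{U}$ of $X$, the set
\[
  D(\mathcal{U}) = \{f\in {\sf C}_p(X) : f\equiv 0 \mbox{ on } X\setminus U \mbox{ for some } U\in\mathcal{U}\}.
\]
This is dense in ${\sf C}_p(X)$: any basic neighborhood prescribing values $c_1,\dots,c_k$ at $x_1,\dots,x_k$ contains a witness, obtained by choosing $U\in\mathcal{U}$ with $\{x_1,\dots,x_k\}\subseteq U$ and using Tietze to extend $x_i\mapsto c_i$ and $X\setminus U\mapsto 0$ to a continuous function. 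The strategy $F$ for TWO in $\gfin^{\alpha}(\Omega,\Omega)$ on $X$ is then to feed ONE's $\omega$-cover move $\mathcal{U}_\beta$ into $\Phi$ as $D(\mathcal{U}_\beta)$; for each $f$ in the resulting finite $T_\beta=\Phi(D(\mathcal{U}_0),\dots,D(\mathcal{U}_\beta))$ select some $U(f)\in\mathcal{U}_\beta$ off which $f$ vanishes, and play $\{U(f):f\in T_\beta\}$. To check the $\omega$-cover condition, for a finite $H\subseteq X$ the nonempty open set $\{f\in {\sf C}_p(X):f(x)>1/2\mbox{ for all }x\in H\}$ meets the dense union $\bigcup_{\beta<\alpha}T_\beta$; any witness $f$ is nonzero on $H$, so $H\subseteq U(f)$.

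For $(2)\Rightarrow(1)$, given a winning strategy $F$ for TWO in $\gfin^{\alpha}(\Omega,\Omega)$ on $X$, I would fix a countable dense $\{g_n:n<\omega\}\subseteq {\sf C}_p(X)$ and positive reals $\epsilon_n\searrow 0$. By the analogue of Proposition \ref{finaddindec} applied to the minimal winning ordinal (together with the upward closure of "TWO has a winning strategy" as $\alpha$ grows, and handling finite $\alpha$ trivially) we may assume $\alpha$ is a countable additively indecomposable ordinal, i.e.\ $\alpha=\omega^\gamma$ for some $1\le\gamma<\omega_1$. Partition $\alpha=\bigsqcup_{n<\omega}A_n$ so that each $A_n$ in its inherited order is order-isomorphic to $\alpha$ via $\phi_n:\alpha\to A_n$. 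In inning $\beta\in A_n$, enumerate ONE's dense move $D_\beta$ as $(f^\beta_m:m<\omega)$; density of $D_\beta$ at $g_n$ makes
\[
  \mathcal{U}^n_\beta = \{\{x\in X:|f^\beta_m(x)-g_n(x)|<\epsilon_n\} : m<\omega\}\setminus\{X\}
\]
an $\omega$-cover of $X$. Treating $\mathcal{U}^n_\beta$ as ONE's move in inning $\phi_n^{-1}(\beta)$ of the $n$-th sub-game, apply $F$ to obtain a finite $S^n_\beta\subseteq\omega$ and let TWO play $T_\beta=\{f^\beta_m:m\in S^n_\beta\}$. Since each sub-game is a full $\alpha$-length $F$-play won by TWO, for every finite $H\subseteq X$ and every $n$ some $f\in\bigcup_{\beta\in A_n}T_\beta$ satisfies $|f-g_n|<\epsilon_n$ on $H$; combined with density of $\{g_n\}$ and $\epsilon_n\to 0$, this shows $\bigcup_{\beta<\alpha}T_\beta$ is dense in ${\sf C}_p(X)$.

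The principal obstacle is producing the partition $\alpha=\bigsqcup_{n<\omega}A_n$ with each $A_n$ order-isomorphic to $\alpha$; this is where the restriction to countable additively indecomposable $\alpha$ becomes essential. For $\alpha=\omega^\gamma$ it can be built by transfinite induction on $\gamma$: the base case $\alpha=\omega$ is any splitting of $\omega$ into $\omega$ infinite subsets; the successor step uses $\omega^{\gamma+1}=\omega^\gamma\cdot\omega$ broken into $\omega$ blocks of type $\omega^\gamma$, each re-partitioned inductively and interleaved via the base-case partition of $\omega$; the limit step uses a cofinal $\omega$-sequence in $\gamma$. Once this partition is in hand, the residual bookkeeping --- that the $n$-th sub-game is fed moves in the order dictated by $\phi_n$, so $F$'s output really is a legal response --- is routine.
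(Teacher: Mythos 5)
Your argument is correct, and it runs on the same underlying machinery as the paper's (translation between $\omega$-covers of $X$ and dense subsets of ${\sf C}_p(X)$, plus a partition of an additively indecomposable $\alpha$ into $\omega$ pieces of order type $\alpha$), but it is organized differently in both directions. For $(1)\Rightarrow(2)$ the paper fixes a countable base of $X$, restricts ONE to $\omega$-covers by finite unions of basic sets, and encodes these as the dense families $E(\mathcal{V})$ of functions $\chi_{U,q_1,\cdots,q_k}$; your Tietze-based $D(\mathcal{U})$ achieves the same translation for arbitrary $\omega$-covers with less bookkeeping, and your verification (a witness $f$ with $f>1/2$ on $H$ vanishes off $U(f)$, so $H\subseteq U(f)$) is sound. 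For $(2)\Rightarrow(1)$ the paper factors the implication through the point game: it first passes from $\gfin^{\alpha}(\Omega,\Omega)$ on $X$ to $\gfin^{\alpha}(\Omega_{\mathbf 0},\Omega_{\mathbf 0})$ on ${\sf C}_p(X)$ via Theorem \ref{twowinsduality} (which needs the tail property of Proposition \ref{typofstrfortwo} because $\epsilon$ shrinks within a single play), and then from the point game at a dense set of points to $\gfin^{\alpha}(\mathfrak{D},\mathfrak{D})$ by the partition argument of Theorem \ref{fansepgame}. You collapse these two steps into one direct construction, re-centering the translation at each $g_n$ of a countable dense set rather than at $\mathbf 0$ and running $\omega$ sub-games on the pieces $A_n$; since each sub-game uses a single fixed $g_n$ and $\epsilon_n$, you do not need the tail property at all, which is a small simplification, at the cost of re-proving the content of Theorem \ref{twowinsduality} inline. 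Three points you gloss over are routine but worth recording: deleting $X$ from $\mathcal{U}^n_{\beta}$ preserves the $\omega$-cover property only because $X$ is infinite (perturb a witness at a point outside $H$); the final density check needs infinitely many $g_n$ in each nonempty basic open set, i.e.\ that ${\sf C}_p(X)$ has no isolated points; and the minimal winning ordinal is necessarily infinite (no finite family of proper open sets is an $\omega$-cover), so Proposition \ref{finaddindec} applies and the reduction to additively indecomposable $\alpha$ is legitimate.
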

\begin{proof}
The proof of (1)$\Rightarrow$(2) proceeds as in the proof of Theorem 9 (3) of \cite{Gamelength}, with the necessary changes made for $\gfin$ instead of $\gone$, as we now outline.
Assume that $\Phi$ is a winning strategy for TWO in the game $\gfin^{\alpha}(\mathfrak{D},\mathfrak{D})$ on ${\sf C}_p(X)$.  Let $D=\{d_n:n<\omega\}$ be a countable dense subset of $X$, and for each $n$ and each $m$ let $B_{n,m}$ be the set $\{x\in X: d(x,d_n)<\frac{1}{3^{m+1}}\}$.  Then  $\mathcal{B} = \{B_{n,m}:\,m,\,n<\omega\}$ is a countable base for the topology of $X$.
Let $\mathcal{U}$ be the family of sets that are a union of finitely many distinct elements of $\mathcal{B}$ whose closures are disjoint. For each $U\in\mathcal{U}$, say 
\[
  U = \bigcup_{i=1}^{k} B_{n_i,m_i},
\]
and each finite set $\{q_1,\cdots,q_k\}$ (the same $k$ as in the definition of $U$) of rational numbers in the interval $(0,\,1)$, let $\chi_{U,q_1,\cdots,q_k}:X\rightarrow \lbrack 0,\,1\rbrack$ be the continuous function that is zero on $X\setminus U$, and takes value $q_i$ on $\overline{B}_{n_i,m_i+1}$.  

Since for an $\omega$-cover $\mathcal{V}$ of $X$ the set $\{U\in\mathcal{U}: (\exists V\in\mathcal{V})(U\subseteq V)\}$ is an $\omega$-cover of $X$, we may assume that all moves of ONE in the game $\gfin^{\alpha}(\Omega,\Omega)$ are subsets of $\mathcal{U}$. Also observe that for an $\omega$-cover $\mathcal{V}\subseteq\mathcal{U}$ the corresponding set $E(\mathcal{V})$ of functions of the form $\chi_{U,q_1,\cdots,q_k}$ with $U\in\mathcal{V}$ and $q_1,\cdots,q_k$ an appropriate set of rationals in $(0,\,1)$ is a dense subset of ${\sf C}_p(X)$ (See the proof of the Claim in the proof of \cite{Gamelength}, Theorem 9 part (3)).

Now we define a strategy $F$ for TWO in the game $\gfin^{\alpha}(\Omega,\Omega)$ on $X$. When ONE plays an $\omega$-cover $\mathcal{U}_0\subseteq\mathcal{U}$, TWO first computes the dense subset $E(\mathcal{U}_0)$ of ${\sf C}_p(X)$ and applying the winning strategy $\Phi$ computes the finite subset $\Phi(E(\mathcal{U}_0))$ of ${\sf C}_p(X)$. If this finite set is
\[
  \{\chi(U_1,q^1_1,\cdots,q^1_{k_1}), \cdots, \chi(U_m,q^m_1,\cdots,q^m_{k_m})\}
\]
then TWO responds with $F(\mathcal{U}_0) = \{U_1,\cdots,\,U_m\}$ in the game $\gfin^{\alpha}(\Omega,\Omega)$. 

Let $0<\beta<\alpha$ be given and suppose that in inning $\beta$ ONE plays the $\omega$-cover $\mathcal{U}_{\beta}\subseteq\mathcal{U}$. Then TWO first computes
\[
  \Phi(E(\mathcal{U}_{\nu}):\nu\le\beta) =   \{\chi(U_1,q^1_1,\cdots,q^1_{k_1}), \cdots, \chi(U_m,q^m_1,\cdots,q^m_{k_m})\},
\]
say, and then TWO responds with $F(\mathcal{U}_{\nu}:\nu\le\beta) = \{U_1,\cdots,U_m\}$.

We leave it to the reader to verify that $F$ is a winning strategy for TWO in the game $\gfin^{\alpha}(\Omega,\Omega)$.\\

{\flushleft{(2)$\Rightarrow$(1):}} We show that if TWO has a winning strategy in $\gfin^{\alpha}(\Omega_{\mathbf 0}.\Omega_{\mathbf 0})$, then TWO has a winning strategy in $\gfin^{\alpha}(\mathfrak{D},\mathfrak{D})$ in ${\sf C}_p(X)$, and thus as metrizable spaces are ${\sf T}_{3.5}$, Theorem \ref{twowinsduality} implies the result. We may assume that $\alpha$ is minimal. The proof now proceeds, \emph{mutatis mutandis}, like the proof of part (4) of Theorem 9 of \cite{Gamelength}.
\end{proof} 

We now expand the argument of Lemma \ref{fantightsep} to game lengths.

\begin{theorem}\label{fansepgame}
Let $X$ be a separable space.  Let $\alpha$ be an infinite ordinal.
\begin{enumerate}
\item{If for a dense set of $x\in X$ we have ${\sf tp}_{sf}(X,x)=\alpha$, then ${\sf tp}_d(X) \le \alpha$.}
\item{If for a dense set of $x\in X$ we have ${\sf tp}_{fin-ft}(X,x) = \alpha$, then ${\sf tp}_{d-fin}(X) \le \alpha$}
\end{enumerate}
\end{theorem}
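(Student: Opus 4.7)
My plan is to lift the argument of Lemma \ref{fantightsep} from the one-move selection principle to games of ordinal length $\alpha$. I describe part (1); part (2) is \emph{mutatis mutandis} with $\gfin$ in place of $\gone$ and finite responses in place of singletons. First, let $A=\{x\in X:{\sf tp}_{sf}(X,x)=\alpha\}$ and, using separability of $X$, fix a countable set $\{d_m:m<\omega\}\subseteq A$ dense in $X$ (this step is immediate in the hereditarily separable settings of the subsequent applications, where $A$ itself is separable). For each $m$ fix a winning strategy $\sigma_m$ of TWO in $\gone^{\alpha}(\Omega_{d_m},\Omega_{d_m})$.

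Next I partition $\alpha$ into $\omega$ disjoint subsets each of order type $\alpha$. By Proposition \ref{sfanaddindec} (resp.\ Proposition \ref{fanaddindec}) $\alpha$ is additively indecomposable, hence $\alpha=\omega^{\gamma}$ for some $\gamma\ge 1$. For $\alpha=\omega$ partition $\omega$ into $\omega$ many infinite pieces. For $\alpha>\omega$ every $\beta<\alpha$ has a unique representation $\beta=\omega\cdot\xi(\beta)+n(\beta)$ with $\xi(\beta)<\alpha$ and $n(\beta)<\omega$; fix a partition $\{T_m:m<\omega\}$ of $\omega$ into infinite sets and set $S_m=\{\beta<\alpha:n(\beta)\in T_m\}$. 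A short Cantor normal form calculation shows each $S_m$ has order type $\alpha$. Let $\pi_m:\alpha\to S_m$ denote the order isomorphism.

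I then define TWO's strategy $\Sigma$ in $\gone^{\alpha}(\mathfrak{D},\mathfrak{D})$ on $X$: given ONE's move $D_\gamma$ at inning $\gamma$, locate the unique $m$ with $\gamma\in S_m$ and set $\delta=\pi_m^{-1}(\gamma)$. Since each $D_\nu$ is dense, $D_\nu\setminus\{d_m\}\in\Omega_{d_m}$, and I put
\[
\Sigma(D_0,\ldots,D_\gamma)\;:=\;\sigma_m\bigl(D_{\pi_m(\rho)}\setminus\{d_m\}:\rho\le\delta\bigr)\;\in\;D_\gamma.
\]
In a completed $\Sigma$-play the subsequence of TWO's moves indexed by $S_m$ coincides with the $\sigma_m$-responses in an $\alpha$-length play of $\gone^{\alpha}(\Omega_{d_m},\Omega_{d_m})$; since $\sigma_m$ is winning, $\{x_\gamma:\gamma\in S_m\}\in\Omega_{d_m}$, and hence $d_m\in\overline{\{x_\gamma:\gamma<\alpha\}}$ for every $m$. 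As $\{d_m:m<\omega\}$ is dense in $X$, so is $\{x_\gamma:\gamma<\alpha\}$, so $\Sigma$ is a winning strategy and ${\sf tp}_d(X)\le\alpha$.

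The main obstacle I anticipate is the ordinal-combinatorial partition of $\alpha$ into countably many pieces of order type $\alpha$: this genuinely requires additive indecomposability (it fails, for instance, for $\alpha=\omega+1$), which is exactly what Propositions \ref{sfanaddindec} and \ref{fanaddindec} deliver. A lesser concern is extracting the countable dense subset $\{d_m\}\subseteq A$ from mere separability of $X$: this is transparent when $X$ is hereditarily separable as in the preceding corollaries, but in a general separable space one would need to invoke countable tightness at the points of a countable dense set of $X$ or impose such a hypothesis explicitly.
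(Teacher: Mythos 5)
Your argument is essentially identical to the paper's proof: fix a countable dense set of points at which TWO has winning strategies $\sigma_m$, partition $\alpha$ (additively indecomposable by Proposition \ref{sfanaddindec}) into $\omega$ pieces of order type $\alpha$, and interleave the $\sigma_m$ along those pieces; your explicit Cantor-normal-form partition and the removal of $d_m$ from ONE's moves are just details the paper leaves implicit. Your final concern about extracting a countable dense subset of the set $A$ is already covered by the section's standing assumption that every dense subset of the spaces under consideration has a countable subset that is still dense.
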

\begin{proof}
We give the argument for part (1), leaving part (2) to the reader.
Let $D$ be a countable dense subset of $X$, enumerated as $(d_n:n<\omega)$. For each $n$, let $\sigma_n$ be a winning strategy for TWO in the game $\gone^{\alpha}(\Omega_{x_n},\Omega_{x_n})$. Since $\alpha$ is minimal, it is additively indecomposable. 

Write $\alpha = \bigcup\{S_n:n<\omega\}$ where for each $n<m$ both $S_n$ and $S_m$ are of order type $\alpha$, but pairwise disjoint. 
List each $S_n$ in order type $\alpha$ as $S_n = (\gamma^n_{\beta}:\beta<\alpha)$.

Define a strategy $\sigma$ for TWO in the game $\gone^{\alpha}(\mathfrak{D},\mathfrak{D})$ as follows:

Assuming inning $\gamma<\alpha$ is in progress and ONE has moved $(D_{\nu}:\nu\le \gamma)$, identify the $n$ with $\gamma\in S_n$. Then consider the sequence $(D_{\nu}:\nu\le \gamma \mbox{ and }\nu\in S_n)$. We define
\[
  \sigma(D_{\nu}:\nu\le \alpha) := \sigma_n(D_{\nu}:\nu\le\gamma\mbox{ and }\nu\in S_n).
\]
Then each $\alpha$-length $\sigma$-play of $\gone^{\alpha}(\mathfrak{D},\mathfrak{D})$ is won by TWO, proving that ${\sf tp}_d(X)\le\alpha$. 
\end{proof}

We have the following two characterizations of certain spaces where TWO has a winning strategy in the games of length $\omega$:
\begin{theorem}[\cite{COC6}, Theorem 3]\label{TWOgonedd} For a ${\sf T}_3$-space the following are equivalent:
\begin{enumerate}
\item{TWO has a winning strategy in $\gone^{\omega}(\mathfrak{D},\mathfrak{D})$.}
\item{$\pi(X)=\aleph_0$.}
\end{enumerate}
\end{theorem}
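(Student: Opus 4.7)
The direction $(2)\Rightarrow (1)$ is the easy one and I would dispatch it directly. Given a countable $\pi$-base $\{B_n:n<\omega\}$, define TWO's strategy by having TWO respond to ONE's $n$-th dense move $D_n$ with any $x_n\in D_n\cap B_n$; this intersection is nonempty because $D_n$ is dense and $B_n$ is a nonempty open set. For any nonempty open $U\subseteq X$, pick $B_n\subseteq U$ from the $\pi$-base, so $x_n\in B_n\subseteq U$. Hence $\{x_n:n<\omega\}\in\mathfrak{D}$.

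For $(1)\Rightarrow (2)$, let $\sigma$ be TWO's winning strategy. As a warm-up I would first observe that the play with ONE's constant move $D_n = X$ shows that $\{\sigma(X,\dots,X):n<\omega\}$ is a countable dense set, so $X$ is separable. To upgrade separability to countable $\pi$-weight, the plan is to recursively construct a countable tree of nonempty open sets $\{V_s:s\in\omega^{<\omega}\}$ with $V_\emptyset=X$ and $\overline{V_{s^\frown k}}\subseteq V_s$ (using ${\sf T}_3$ regularity), and to show that $\mathcal{B}=\{V_s : s \in \omega^{<\omega}, s\neq\emptyset\}$ is a $\pi$-base.

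The recursive step at a given $V_s$ uses $\sigma$ as follows. Assume $V_s$ is regular open, so that sets of the form $E\cup(X\setminus\overline{V_s})$ with $E\subseteq V_s$ dense-in-$V_s$ are dense in $X$. Feeding such dense sets (chosen by an enumeration described below) to $\sigma$ yields responses that are dense in $X$; since the "outside" part $X\setminus\overline{V_s}$ is already fixed, the responses that land in $V_s$ must form a dense subset of $V_s$. To each such response $x_{s^\frown k}$ attach a regular-open neighborhood $V_{s^\frown k}$ with $\overline{V_{s^\frown k}}\subseteq V_s$.

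The main obstacle — and the step that requires real care — is showing that the tree we obtain is genuinely a $\pi$-base rather than merely a collection of neighborhoods of a dense set at each level. Simply descending and choosing children whose centers lie in a prescribed target $W$ need not produce a $V_s\subseteq W$, because regularity alone does not guarantee that the neighborhoods shrink sufficiently. My plan to resolve this is to add a bookkeeping step: at each node $s$, enumerate a cofinal system of test open subsets $\{W^s_j:j<\omega\}$ of $V_s$ (drawn from the already-constructed open sets together with neighborhoods from ${\sf T}_3$), and feed $\sigma$ dense sets that are concentrated inside each $W^s_j$ in turn. Density of $\sigma$'s responses in $X$ forces a response inside each $W^s_j$, and choosing the neighborhood $V_{s^\frown k}$ of that response to have closure inside $W^s_j$ guarantees $V_{s^\frown k}\subseteq W^s_j$. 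Iterating along the tree, any nonempty open $W\subseteq X$ will be met in this enumeration at some finite level, yielding $V_s\subseteq W$. The verification that this enumeration closes up correctly, and that $\sigma$'s winning property supplies a dense response inside each prescribed $W^s_j$, is where the technical content lies.
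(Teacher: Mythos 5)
The paper itself does not prove this statement --- it is quoted from \cite{COC6} (Theorem 3) --- so your argument can only be judged on its own terms. Your direction $(2)\Rightarrow(1)$ is correct and complete, and the separability warm-up in $(1)\Rightarrow(2)$ is fine. The hard direction, however, has a genuine gap at exactly the point you flag. The ``countable cofinal system of test open subsets $\{W^s_j : j<\omega\}$ of $V_s$'' on which your bookkeeping rests is, by definition, a countable $\pi$-base for $V_s$; having such a system at every node of a tree whose nodes sit densely in $X$ is essentially the conclusion of the theorem. It cannot be ``drawn from the already-constructed open sets together with neighborhoods from ${\sf T}_3$'': the already-constructed family is countable with no reason to be cofinal among the open subsets of $V_s$, and regularity supplies neighborhoods one point at a time, not a countable cofinal family. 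Consequently your closing assertion that ``any nonempty open $W\subseteq X$ will be met in this enumeration at some finite level'' is not established --- it is a restatement of $\pi(X)=\aleph_0$. A second, related problem: ``density of $\sigma$'s responses forces a response inside each $W^s_j$'' is only true of a completed $\omega$-length play. At a finite stage $\sigma$ has promised nothing, so to land a response inside a prescribed target you must run an entire play and wait for the (play-dependent) inning at which a response happens to fall there; organizing this simultaneously for countably many targets at countably many nodes is the real content, and it is not supplied.

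A workable starting point in place of the ``cofinal system'' is the analogue of Claim 1 in the proof of Theorem \ref{twowinscsf}: for every finite position $p=(D_1,\dots,D_n)$ the set $S_p=\{\sigma(p^\frown D):D\in\mathfrak{D}\}$ meets every dense subset of $X$ (if $E$ is dense then $\sigma(p^\frown E)\in E\cap S_p$), hence has nonempty interior; note that this gives nonempty interior only, not density of $S_p$, which is why the subsequent tree argument still requires care. Alternatively --- and this is how the result is obtained in \cite{COC6} --- one relates $\gone^{\omega}(\mathfrak{D},\mathfrak{D})$ to the Berner--Juh\'asz point-picking game (ONE plays nonempty open sets, TWO picks points from them, ONE wins if the picked points are dense), in the spirit of the dualities quoted in Section 3 for the non-discrete variant, and then invokes the Berner--Juh\'asz theorem that for ${\sf T}_3$-spaces ONE has a winning strategy in that game if and only if $\pi(X)=\aleph_0$.
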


\begin{theorem}[Bella, \cite{ABPR} Corollary 3]\label{TWOgfindd} Let $X$ be a separable metric space. The following are equivalent:
\begin{enumerate}
\item{TWO has a winning strategy in $\gfin^{\omega}(\mathfrak{D},\mathfrak{D})$ on ${\sf C}_p(X)$.}
\item{$X$ is $\sigma$-compact.}
\end{enumerate}
\end{theorem}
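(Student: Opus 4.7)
The plan is to use Theorem \ref{twodensitygame} with $\alpha=\omega$ to reduce the statement to the classical equivalence: TWO has a winning strategy in $\gfin^{\omega}(\Omega,\Omega)$ on $X$ if, and only if, $X$ is $\sigma$-compact. I would then handle the two directions of this reformulated equivalence separately.

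For the ``$\sigma$-compact implies TWO wins'' direction, write $X=\bigcup_{n<\omega}K_n$ with $K_n\subseteq K_{n+1}$ each compact. TWO's strategy in inning $n$ exploits compactness of the finite power $K_n^n$. Given an $\omega$-cover $\mathcal{U}_n$ played by ONE, the family $\{U^n:U\in\mathcal{U}_n\}$ covers $K_n^n$: any $n$-tuple from $K_n$ is a finite subset of $X$, hence contained in some $U\in\mathcal{U}_n$. Extract a finite subcover $\{U^n_1,\dots,U^n_{k_n}\}$ of $K_n^n$ and have TWO respond with $V_n:=\{U_1,\dots,U_{k_n}\}$. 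A padding argument (repeat coordinates to inflate any $k$-subset with $k\le n$ into an $n$-tuple) shows that every subset of $K_n$ of cardinality at most $n$ lies inside some $V\in V_n$. Consequently, for any finite $F\subseteq X$, choosing $n$ large enough that $F\subseteq K_n$ and $|F|\le n$ produces some $V\in V_n$ containing $F$, so $\bigcup_{n<\omega}V_n$ is an $\omega$-cover of $X$.

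For the converse, suppose TWO has a winning strategy $\sigma$ in $\gfin^{\omega}(\Omega,\Omega)$ on $X$. I would construct a winning strategy for TWO in the classical Menger game $\gfin^{\omega}(\open,\open)$ on $X$ as follows. When ONE plays an open cover $\mathcal{U}_n$, form the closure under finite unions $\mathcal{U}_n^{\vee}:=\{U_1\cup\cdots\cup U_k:U_i\in\mathcal{U}_n,\,k<\omega\}$, which is an open $\omega$-cover of $X$. Feed $\mathcal{U}_n^{\vee}$ to $\sigma$, obtain a finite $V_n\subseteq\mathcal{U}_n^{\vee}$, and unpack each element of $V_n$ into the finitely many members of $\mathcal{U}_n$ whose union it is, collecting these into a finite $W_n\subseteq\mathcal{U}_n$. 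Since $\bigcup_{n<\omega}V_n$ is an $\omega$-cover of $X$ (in particular a cover) and $\bigcup W_n=\bigcup V_n$, the sequence $(W_n)$ is a winning play for TWO in the Menger game. Hurewicz's classical theorem then yields that $X$ is $\sigma$-compact.

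The main obstacle sits in the ``$\sigma$-compact implies TWO wins'' direction: one must work in the correct finite power $K_n^n$ and verify the padding step that upgrades the finite subcover of $K_n^n$ into a finite family capturing all small enough finite subsets of $K_n$ inside single members. The converse direction, by contrast, is almost entirely bookkeeping once one recalls Hurewicz's game-theoretic characterization of $\sigma$-compactness.
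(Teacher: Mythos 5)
The paper offers no proof of this statement at all --- it is imported verbatim from Bella's paper \cite{ABPR} as Corollary 3 --- so your proposal supplies an argument where the source gives only a citation; that is a genuinely different (and more self-contained) route. The reduction via Theorem \ref{twodensitygame} with $\alpha=\omega$ is legitimate and non-circular, since the proof of that theorem nowhere uses the present statement, and it correctly converts the problem into: TWO has a winning strategy in $\gfin^{\omega}(\Omega,\Omega)$ on $X$ if, and only if, $X$ is $\sigma$-compact. Your forward direction (covering $K_n^n$ by $n$-th powers of members of $\mathcal{U}_n$, extracting a finite subcover by compactness, and padding $k$-tuples with $k\le n$ up to $n$-tuples) is the standard argument and is correct. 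Two caveats on the converse. First, $\mathcal{U}_n^{\vee}$ is an $\omega$-cover only when $X$ is not itself a finite union of members of $\mathcal{U}_n$; you should either treat compact $X$ separately and delete from $\mathcal{U}_n^{\vee}$ any union equal to $X$, or observe that if some $\mathcal{U}_n$ admits a finite subcover then TWO wins that Menger play outright. Second, the concluding step is misattributed: Hurewicz's 1925 theorem characterizes the Menger \emph{property} by ONE not having a winning strategy, whereas you need the strictly stronger fact that TWO's having a winning strategy in $\gfin^{\omega}(\open,\open)$ on a metrizable space forces $\sigma$-compactness --- that is Telg\'arsky's theorem (with a direct proof later given by Scheepers). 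With that citation repaired, your argument is complete, and it buys a transparent proof of a result the paper merely quotes, at the cost of invoking Telg\'arsky's theorem as a black box.
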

It is of interest to find a characterization of spaces $X$ for which TWO has a winning strategy in the game $\gfin^{\alpha}(\mathfrak{D},\mathfrak{D})$ on $X$. 

\section{Remarks}

{\flushleft{\bf A}} T. Usuba further investigated the notion of indestructible countable tightness, and proved the following interesting theorem \cite{TU}:
\begin{theorem}[Usuba]\label{usubath} The following three theories are equiconsistent:
\begin{enumerate}
\item{{\sf ZFC} + {\sf every indestructibly countably tight space of cardinality $\aleph_1$ has character at most $\aleph_1$.}}
\item{{\sf ZFC} + {\sf $2^{\aleph_1}>\aleph_2$ and no countably tight space of cardinality $\aleph_1$ has character $\aleph_2$.}}
\item{{\sf ZFC} + {\sf there is a strongly inaccessible cardinal}.}
\end{enumerate}
\end{theorem}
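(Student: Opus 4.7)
The plan is to establish this as a three-way equiconsistency by proving (3) implies the consistency of (1) and of (2), and conversely by proving each of (1), (2) implies the existence of an inner model with a strongly inaccessible cardinal. The guiding ideas are a countably closed collapse for the ``upper bound'' direction and a Kurepa-tree-style counterexample for the ``lower bound'' direction.

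For (3) $\Rightarrow$ Con(1) and Con(2), I would start in a model $V$ containing a strongly inaccessible cardinal $\kappa$ and force with the Levy collapse ${\sf Coll}(\omega_1,{<}\kappa)$, which is countably closed, preserves $\aleph_1$, and makes $\kappa=\aleph_2$ in the extension $V[G]$. In $V[G]$, any topology $\tau$ on $\aleph_1$ together with a designated family of potential neighborhoods at a point $x$ may, by a $\kappa$-cc / name-counting reflection argument, be assumed to live in some intermediate extension $V[G\restriction\alpha]$ for $\alpha<\kappa$; the tail forcing $\poset_{[\alpha,\kappa)}$ is then countably closed. Applying Theorem \ref{indestrtightchar} (specifically the game-theoretic equivalent (6) used as in the proof of Theorem \ref{countablyclosed}) to this tail forcing lets one extract an $\aleph_1$-sized neighborhood base of $x$ from any $x$-tightness tree built in $V[G]$, yielding (1). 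To obtain (2), follow $G$ by a further forcing (for instance ${\sf Fn}(\aleph_3,2,\omega_1)$) to make $2^{\aleph_1}$ large while keeping the reflection argument intact, and verify that the resulting extension still forbids character-$\aleph_2$ for countably tight spaces of size $\aleph_1$.

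For (1) $\Rightarrow$ Con(3) (and similarly (2) $\Rightarrow$ Con(3)), I would argue by contradiction: if $\aleph_2^V$ is not strongly inaccessible in $L$, then by classical Jensen/covering combinatorics there exists in $L$ (hence in $V$) a Kurepa tree $T\subseteq\,^{<\omega_1}\omega$ of height $\omega_1$ with at least $\aleph_2$ cofinal branches. From such a $T$ one builds a topology on (a copy of) $\omega_1$ whose neighborhood filters at a distinguished accumulation point $x$ are read from initial segments of branches of $T$. The branches then provide $\aleph_2$ pairwise-separating neighborhood filters, forcing $\chi(X,x)\geq\aleph_2$, while tightness witnesses at $x$ are always realized along countably many levels of $T$, giving countable tightness. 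Indestructibility is verified against Theorem \ref{indestrtightchar}(3): given any $x$-tightness tree $\{y_f\}$ and any $p\in{\sf Fn}(\omega_1,\omega,\omega_1)$, one uses a branch of $T$ through $\mathrm{dom}(p)$ to extend $p$ into the dense set $D=\{g:\{y_{g\restriction_\gamma}\}\in\Omega_x\}$. This contradicts (1); the argument for (2) additionally exploits the Cohen-like freedom in $T$'s branches to separately ensure $2^{\aleph_1}>\aleph_2$.

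The main obstacle is the inner-model lower bound: one must exhibit a single topological space on $\aleph_1$ whose character is exactly $\aleph_2$ \emph{and} whose countable tightness is preserved by \emph{every} countably closed forcing. The Kurepa tree supplies $\aleph_2$ cofinal branches separating neighborhoods (the character side), but one must verify that no subsequent countably closed forcing adds a new branch that destroys the combinatorial mechanism providing countable tight-witnesses in each $x$-tightness tree; this is where the precise strength of Kurepa's hypothesis (rather than outright inaccessibility) in $L$ is both necessary and sufficient, and it is the combinatorial heart of the equiconsistency computation.
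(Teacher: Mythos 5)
There is nothing in the paper to compare your argument against: Theorem \ref{usubath} is stated as a result of Usuba, quoted from his preprint \cite{TU}, and the paper gives no proof of it. So your proposal has to be judged on its own. The skeleton you chose --- Levy collapse of an inaccessible for the upper bound, and an $L$-combinatorial (Kurepa-tree-style) counterexample for the lower bound, in analogy with the Silver/Solovay equiconsistency for Kurepa's Hypothesis --- is the natural and almost certainly correct template. But both halves have real gaps where the actual content of the theorem lives.

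In the collapse direction, the reflection step is wrong as stated: a topology on $\aleph_1$, or even just a neighborhood base of a single point $x$ witnessing $\chi(X,x)=\aleph_2$, is an object of size $\kappa=\aleph_2$ in $V[G]$, so $\kappa$-cc name-counting does \emph{not} place it in an intermediate extension $V[G\restriction\alpha]$; only objects of size $<\kappa$ reflect that way. What must be reflected is the countable-tightness structure itself (e.g.\ an $x$-tightness tree, or an elementary-submodel argument in the style of Dow), and showing that this reflection forces the character down to $\aleph_1$ is precisely the work you have not done. The same issue recurs for clause (2), where after blowing up $2^{\aleph_1}$ you must rule out character \emph{exactly} $\aleph_2$ while characters $\geq\aleph_3$ remain possible; nothing in your sketch addresses why the subsequent $\omega_1$-closed Cohen forcing cannot create such a gap point. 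In the lower-bound direction, you never actually construct the space: you assert that branches of a Kurepa tree give $\aleph_2$ "pairwise-separating neighborhood filters" and that "tightness witnesses are realized along countably many levels," but no topology is specified for which either claim can be checked, and the indestructibility verification via Theorem \ref{indestrtightchar}(3) does not engage with an arbitrary $x$-tightness tree $\{y_f: f\in\cup_{\alpha<\omega_1}\,{}^{\alpha}\omega\}$, which is indexed by $^{<\omega_1}\omega$ rather than by $T$; extending $p$ "along a branch of $T$" says nothing about landing in the dense set $D$. These constructions --- a single space tying the number of branches to the character while keeping countable tightness indestructible --- are the combinatorial heart of Usuba's theorem, and the proposal leaves them entirely open.
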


{\flushleft{\bf B}} 
The countable tightness of a sequential space or of a Frech\'et-Urysohn space can also be destroyed by countably closed forcing.  
For a point $x$ of a space $X$ we say that the infinite subset $B$ of $X$ converges to $x$ if each for open neighborhood $U$ of $x$ the set $B\setminus U$ is finite. We define $\Gamma_x=\{B\subset X\setminus\{x\}: B \mbox{ converges to }x\}.$  It is clear that $\Gamma_x$ is a subset of $\Omega_x$.  A space is Frech\'et-Urysohn at $x$ if each element of $\Omega_x$ has a countable subset that is a member of $\Gamma_x$. A space is said to be \emph{strictly Frech\'et-Urysohn} at $x$ if it has the property $\sone(\Omega_x,\Gamma_x)$. When a space is Frech\'et-Urysohn at $x$ it has countable tightness at $x$. When it is strictly Frech\'et-Urysohn at $x$ then indeed it has countable strong fan tightness there. Also note that if ONE has no winning strategy in $\gone^{\omega}(\Omega_x,\Gamma_x)$, then ONE has no winning strategy in $\gone^{\omega_1}(\Omega_x,\Omega_x)$.  Thus, the following classical theorem of Sharma \cite{PS} implies that strictly Frech\'et-Urysohn spaces are indestructibly countably tight:
\begin{theorem}[Sharma \cite{PS}, Theorem 1]\label{sharmath}
Let $X$ be a topological space and let $x$ be an element of $X$. The following are equivalent:
\begin{enumerate}
\item{$X$ has property $\sone(\Omega_x,\Gamma_x)$.}
\item{ONE has no winning strategy in the game $\gone^{\omega}(\Omega_x,\Gamma_x)$.}
\end{enumerate}
\end{theorem}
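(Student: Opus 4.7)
The implication (2) $\Rightarrow$ (1) is the standard ``stationary strategy'' trick: given a sequence $(A_n)_{n<\omega}$ of elements of $\Omega_x$, let ONE play the strategy $\sigma(x_0,\ldots,x_{n-1}) := A_n$, which depends only on the length of TWO's history. By (2), $\sigma$ is not winning, so some play against it is won by TWO, yielding $x_n \in A_n$ with $\{x_n : n<\omega\} \in \Gamma_x$ --- exactly the selection required by $\sone(\Omega_x,\Gamma_x)$.

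For (1) $\Rightarrow$ (2), I would adapt the classical Pawlikowski tree-of-plays argument. Fix a strategy $\sigma$ of ONE. Since $\sone(\Omega_x,\Gamma_x)$ implies that every element of $\Omega_x$ has a countable subset in $\Gamma_x \subseteq \Omega_x$, I may replace $\sigma$ by a strategy $\tilde\sigma$ whose moves are countable subsets of $\sigma$'s moves; a TWO-win against $\tilde\sigma$ is automatically a TWO-win against $\sigma$. Recursively label $y : \omega^{<\omega}\setminus\{\emptyset\} \to X$ by taking $A_s := \tilde\sigma(y_{s\lceil_1},\ldots,y_s) \in \Omega_x$ and bijectively enumerating it as $\{y_{s\frown k} : k<\omega\}$. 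Every branch $b \in \omega^\omega$ then codes a legal $\tilde\sigma$-play in which TWO plays $y_{b\lceil_n}$ at inning $n$, and the task reduces to exhibiting one such branch for which $\{y_{b\lceil_n} : n\geq 1\} \in \Gamma_x$.

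Next, apply $\sone(\Omega_x,\Gamma_x)$ to the countable family $(A_s)_{s \in \omega^{<\omega}}$, indexed bijectively, obtaining $z_s = y_{s\frown f(s)} \in A_s$ with $\{z_s : s \in \omega^{<\omega}\} \in \Gamma_x$ for some ``response function'' $f : \omega^{<\omega} \to \omega$. Extract a branch $b$ by iterating $f$: set $b\lceil_1 := (f(\emptyset))$ and $b\lceil_{n+1} := b\lceil_n \frown f(b\lceil_n)$. Along this branch $y_{b\lceil_{n+1}} = z_{b\lceil_n}$, so TWO's moves form a subset of $\{z_s\} \in \Gamma_x$ and therefore converge to $x$, provided the set of TWO's moves is infinite.

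The main obstacle is precisely guaranteeing this infinitude, since $\Gamma_x$ by definition consists of infinite sets converging to $x$. A single naive global application of $\sone$ might allow the iterates of $f$ to cycle through only finitely many distinct values. The fix --- the technical heart of the argument --- is a diagonal refinement: construct $b$ coordinate-by-coordinate, at each stage $n$ re-applying $\sone(\Omega_x,\Gamma_x)$ to the countable family of $A_t$ for $t$ extending the partial branch $b\lceil_n$, and choosing the next coordinate so that the newly selected move is distinct from those played so far. Since every infinite subset of a $\Gamma_x$-set is again in $\Gamma_x$, and $\sone(\Omega_x,\Gamma_x)$ is available at every stage, this coordinate-by-coordinate refinement produces the desired branch whose induced play is won by TWO, contradicting the presumed winning nature of $\sigma$ and so establishing (2).
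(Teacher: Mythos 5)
The paper itself offers no proof of this statement --- it is quoted verbatim from Sharma \cite{PS} --- so your argument must stand on its own. Your $(2)\Rightarrow(1)$ is correct and standard. For $(1)\Rightarrow(2)$ you set up the right machinery (pass to a strategy $\tilde\sigma$ with countable moves, build the tree of plays $(A_s)_{s\in\omega^{<\omega}}$, select once from the countably many node-sets, iterate the induced selector $f$ along a branch), and you correctly isolate the one genuine difficulty: the iterates of $f$ may produce only finitely many distinct points, and a finite set is not in $\Gamma_x$. But the repair you propose for this --- the part you yourself call the technical heart --- does not work. If at each stage $n$ you apply $\sone(\Omega_x,\Gamma_x)$ afresh to the subtree above $b\lceil_n$ and retain only one point of the resulting convergent set before switching to a new application at stage $n+1$, then the set of TWO's moves is a diagonal that meets infinitely many different members of $\Gamma_x$ in one point each. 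Such a diagonal need not converge to $x$: a member of $\Gamma_x$ may have finitely many but nonzero points outside a fixed neighborhood $U$ of $x$ (think of the branches of a sequential fan), and nothing in your construction prevents every stage-$n$ choice from being such a point, so that TWO's move set misses $U$ entirely. The appeal to ``every infinite subset of a $\Gamma_x$-set is again in $\Gamma_x$'' therefore does no work, because the final set is not contained in any single $\Gamma_x$-set; and if instead you follow one selector long enough for its convergent set to control the tail, you are back to the original repetition problem.

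The correct repair is made upstream of the selection, not downstream, and then a single application of $\sone(\Omega_x,\Gamma_x)$ suffices. Having arranged (as you do) that each move $A_s$ of $\tilde\sigma$ lies in $\Gamma_x$ --- hence is infinite, and remains in $\Gamma_x\subseteq\Omega_x$ after deleting finitely many points --- define the children of the node $s$ by bijectively enumerating $A_s\setminus\{y_{s\lceil_1},\dots,y_{s}\}$ rather than $A_s$ itself; these are still legal responses for TWO. Along every branch TWO's points are then pairwise distinct by construction. Now one application of $\sone(\Omega_x,\Gamma_x)$ to the countable family of all (pruned) node-sets yields a single set $B\in\Gamma_x$ containing one designated child of each node, and the branch obtained by iterating the induced selector gives a set of TWO's moves that is an infinite --- by distinctness --- subset of $B$, hence a member of $\Gamma_x$. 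That play defeats $\tilde\sigma$, and therefore $\sigma$. With this change your argument closes.
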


Since for $X$ a ${\sf T}_{3.5}$-space, Gerlits and Nagy \cite{GN} proved that ${\sf C}_p(X)$ is Frechet-Urysohn at a point if, and only if, it is strictly Frechet-Urysohn there, it follows for $X$ a ${\sf T}_{3.5}$ space that if ${\sf C}_p(X)$ is Frech\'et-Urysohn, then it is indestructibly countably tight.

In a personal communication P.J. Szeptycki pointed out that countable tightness of a Frech\'et-Urysohn space can be destroyed by forcing with ${\sf Fn}(\omega_1,\omega,\omega_1)$, the countably closed partially ordered set for adding a Cohen subset of $\omega_1$. We discuss this in Example 8 below.

\section{Examples}

{\flushleft{\bf Example 1:}}  There are non-$\sigma$-compact ${\sf T}_{3.5}$ spaces $X$ of arbitrary large cardinality for which TWO has a winning strategy in the game $\gone^{\omega}(\Omega_{\mathbf 0},\Omega_{\mathbf 0})$ on ${\sf C}_p(X)$.

In Theorem 8 of \cite{RBRamsey} we gave for each infinite cardinal $\kappa$ an example of a ${\sf T}_{3.5}$ topological group $X$ which is a {\sf P}-space (each countable intersection of open sets is an open set), and for which TWO has a winning strategy in the game $\gone^{\omega}(\open,\open)$. Being a {\sf P}-space, $X$ is not embeddable as a closed subspace of a $\sigma$-compact space. By Theorem 1 of \cite{GN}, if for a ${\sf T}_{3.5}$-space TWO has a winning strategy in $\gone^{\omega}(\open,\open)$, then TWO has a winning strategy in $\gone^{\omega}(\Omega,\Gamma)$, and thus in $\gone^{\omega}(\Omega,\Omega)$. Then $X$ is not $\sigma$-compact, and yet by Theorem \ref{twowinsduality} TWO has a winning strategy in the game $\gone^{\omega}(\Omega_{\mathbf 0},\Omega_{\mathbf 0})$, and thus $\gfin^{\omega}(\Omega_{\mathbf 0},\Omega_{\mathbf 0})$, on ${\sf C}_p(X)$.

{\flushleft{\bf Example 2:}}  There are $\sigma$-compact ${\sf T}_{3.5}$ spaces $X$ of arbitrary large cardinality in which TWO has a winning strategy in the game $\gone^{\omega}(\Omega_{\mathbf 0},\Omega_{\mathbf 0})$ on ${\sf C}_p(X)$.

In Theorem 14 of \cite{RBRamsey} we gave for each infinite cardinal $\kappa$ an example of a ${\sf T}_{3.5}$ topological group $X$ of cardinality $\kappa$ which is $\sigma$-compact, and for which TWO has a winning strategy in the game $\gone^{\omega}(\Omega,\Omega)$. By Theorem \ref{twowinsduality} TWO has a winning strategy in the game $\gone^{\omega}(\Omega_{\mathbf 0},\Omega_{\mathbf 0})$, and thus $\gfin^{\omega}(\Omega_{\mathbf 0},\Omega_{\mathbf 0})$, on ${\sf C}_p(X)$.

{\flushleft{\bf Example 3:}}  Non $\sigma$-compact metric spaces $X$ for which TWO has a winning strategy in $\gfin^{\omega^2}(\Omega_{\mathbf 0},\Omega_{\mathbf 0})$ in ${\sf C}_p(X)$.

In Theorem 10 of \cite{Gamelength} {\sf CH} is used to construct an uncountable set $X$ of real numbers such that TWO has a winning strategy in $\gone^{\omega^2}(\Omega,\Omega)$ on $X$. It follows that $X$ has strong measure zero and thus, as $X$ is uncountable, it is not $\sigma$-compact. It follows that TWO has a winning strategy in $\gfin^{\omega^2}(\Omega,\Omega)$ on $X$, but not in $\gfin^{\omega}(\Omega,\Omega)$ (by Theorem \ref{twodensitygame} and Theorem \ref{TWOgfindd} as $X$ is not $\sigma$-compact). But then by Theorem \ref{twowinsduality} TWO has a winning strategy in $\gfin^{\omega^2}(\Omega_{\mathbf 0},\Omega_{\mathbf 0})$ on ${\sf C}_p(X)$, and by Proposition \ref{finaddindec} $\omega^2$ is the minimal such ordinal for this example.

{\flushleft{\bf Example 4:}} A space $X$ which is not indestructibly countably tight at an element $x$, but TWO has a winning strategy in $\gfin^{\omega}(\Omega_x,\Omega_x)$. 

It is well known that $\,^{\omega_1}2$ is destructibly Lindel\"of. Since it is compact in all finite powers, it is Lindel\"of in all finite powers and so by the Arkhangel'skii-Pytkeev Theorem, ${\sf C}_p(^{\omega_1}2)$ has countable tightness. As $\,^{\omega_1}2$ is a compact ${\sf T}_{3.5}$ space, Theorem \ref{twowinsduality} implies that TWO has a winning strategy in the game $\gfin^{\omega}(\Omega_{\mathbf 0},\Omega_{\mathbf 0})$ on ${\sf C}_p(^{\omega_1}2)$. 

We shall now see that the countable tightness of this space is destructible by countably closed forcing: We force with $\poset = {\sf Fn}(\omega_1,2,\omega_1)$. In the ground model define for each $\alpha<\omega_1$ and $i\in\{0,\,1\}$ the open set $U^{\alpha}_i = \{f\in \,^{\omega_1}2:\,f(\alpha)=i\}$. Let $(^{\omega_1}2)_G$ denote the ground model version of $^{\omega_1}2$. If $f$ is $\poset$-generic, then 
\[
  \mathcal{U} = \{U^{\alpha}_{f(\alpha)}:\alpha<\omega_1\}
\]
is an open cover of $(^{\omega_1}2)_G$, and has no countable subset that covers $(^{\omega_1}2)_G$. Let $\mathcal{V}$ be the set of finite unions of elements of $\mathcal{U}$. Then $\mathcal{V}$ is an open $\omega$-cover of $(^{\omega_1}2)_G$, but it has no countable subset which covers $(^{\omega_1}2)_G$. Now each $V\in\mathcal{V}$ is a ground model open set and thus for each $x\in \,(^{\omega_1}2)_G\setminus \overline{V}$ there is a ground model continuous function $f_U$ such that $f_U\lbrack\overline{U}\rbrack \subseteq \{0\}$ and $f_U(x) = 1$. The set 
\[
 \{f_U:U\in\mathcal{V}\}
\]
(in the generic extension) is a subset of the ground model version of the set ${\sf C}_p(^{\omega_1}2)$. In the generic extension this uncountable set has the zero function in its closure, but no countable subset of it does since $f$ is generic. Thus the ground model version of the set ${\sf C}_p(^{\omega_1}2)$, which had countable fan tightness in the ground model, is not countably tight in the extension.

This example also illustrates that although TWO has a winning strategy in $\gfin^{\omega}(\Omega_{\mathbf 0},\Omega_{\mathbf 0})$  on the space ${\sf C}_p(^{\omega_1}2)$, ONE has a winning strategy in the game $\gone^{\omega_1}(\Omega_{\mathbf 0},\Omega_{\mathbf 0})$. 
Thus, the game $\gfin^{\alpha}(\Omega_{\mathbf{0}},\Omega_{\mathbf{0}})$ is not as closely related to indestructibility of countable tightness.

{\flushleft{\bf Example 5:}} There is a countable space $X$ and point $x\in X$ at which $X$ has countable strong fan tightness, but ONE has a winning strategy in $\gfin^{\omega}(\Omega_x,\Omega_x)$, and thus in $\gone^{\omega}(\Omega_x,\Omega_x)$. This example is described on pages 250 - 251 of  \cite{COC3}.

{\flushleft{\bf Example 6:}} ${\sf ZFC} + \diamondsuit$: Consider Fedor\v{c}uk's compact, hereditarily separable, Hausdorff space which has no nontrivial convergent sequences. This space is not first countable, and thus by Theorem \ref{twowinscsf} TWO has no winning strategy in $\gone^{\omega}(\Omega_x,\Omega_x)$ at any $x\in X$. By Theorem \ref{weissth} TWO has a winning strategy in $\gone^{\omega_1}(\Omega_x,\Omega_x)$ at each $x\in X$, and thus $X$ is indestructibly countably tight. Moreover, as $X$ has countable $\pi$-weight by \cite{Fremlin} Corollary 43H, TWO has a winning strategy in the game $\gone^{\omega}(\mathfrak{D},\mathfrak{D})$ by Theorem \ref{TWOgonedd}. 

{\flushleft{\bf Example 7:}} A hereditarily separable ${\sf T}_2$ space for which there is no countable ordinal $\alpha$ such that TWO has a winning strategy in the game $\gone^{\alpha}(\Omega_x,\Omega_x)$. Player TWO does not have a winning strategy in the game $\gone^{\alpha}(\Omega,\Omega)$ on the real line $\reals$ for any countable ordinal $\alpha$. The space ${\sf C}_p(\reals)$ is hereditarily separable (see for example Theorem 1 of \cite{Park}), and thus TWO has a winning strategy in the game $\gone^{\omega_1}(\Omega_{\mathbf 0},\Omega_{\mathbf 0})$ by Theorem \ref{weissth}. But by Theorem \ref{metricsonetp} there is no countable ordinal $\alpha$ for which TWO has a winning strategy in the game $\gone^{\alpha}(\Omega_{\mathbf 0},\Omega_{\mathbf 0})$. Note also that this example illustrates that a hereditarily separable space need not have countable strong fan tightness.

{\flushleft{\bf Example 8:}} A Frech\'et-Urysohn space whose countable tightness is not indestructible. 
The underlying set of this space is $X = \{\infty\}\bigcup\{(\alpha,n): \, \alpha<\omega_1 \mbox{ and }n<\omega\}$. For a function $f:\omega_1\rightarrow\omega$ define
\[
 U_f = \{\infty\}\bigcup\{(\alpha,n):n>f(\alpha),\, \alpha<\omega_1,\, n<\omega\}.
\]
Then the topology of $X$ is defined so that each element of $\omega_1\times\omega$ is isolated, while neighborhoods of $\infty$ are sets of the form $U_f$ where $f:\omega_1\rightarrow \omega$ is a function.

Consider a subset $A$ of $X$ for which $\infty\in\overline{A}$. Then for each $f:\omega_1\rightarrow \omega$ we have $A\cap U_f$ is infinite. We claim that there is an $\alpha$ for which $S_{\alpha}:=\{n: (\alpha,n)\in A\}$ is infinite. For otherwise, define $g$ so that $g_A(\alpha) = 1 + \max\{n:(\alpha,\,n)\in A\}$. Then $U_{g_A}$ is a neighborhood of $\infty$ which is disjoint from $A$, a contradiction. But then $S_{\alpha}\subset A$ is a sequence in $X$ converging to $\infty$. It follows that $X$ has the Frech\'et-Urysohn property at $\infty$, and thus is countably tight at $\infty$.

We claim that ONE has a winning strategy in $\gone^{\omega_1}(\Omega_{\infty},\Omega_{infty})$. For $\alpha<\omega_1$ define $O_{\alpha} = \{(\beta,n):\alpha<\beta<\omega_1 \mbox{ and } n<\omega\}$. Then for each $\alpha$ we have $O_{\alpha}\in\Omega_{\infty}$. ONE's strategy is to originally play $O_0$. When TWO responds by choosing $(\alpha_1,n_1)$, ONE's response is $O_{\alpha_1+1}$, and so on. Observe that in any play where ONE follows this strategy, the moves made by TWO is a subset of some member $g$ of the set of functions from $\omega_1$ to $\omega$, and as such the set chosen by TWO during this play is not a member of $\Omega_{\infty}$. Thus, as ONE has a winning strategy in $\gone^{\omega_1}(\Omega_{\infty},\, \Omega_{\infty})$, whence by Theorem \ref{indestrtightchar} $X_{\infty}$ is not indestructibly countably tight at $\infty$.

\section*{Acknowledgements}
I thank Dr. P.J. Szeptycki for the communication that produced Example 8, and Dr. W.A.R. Weiss for permission to include his argument that TWO has a winning strategy in $\gone^{\omega_1}(\Omega_x,\Omega_x)$ whenever $x$ is an element of a hereditarily separable space (Theorem \ref{weissth}). I also thank Dr. T. Usuba for informing me of his pre-publication results \cite{TU} regarding indestructibility of countable tightness. Finally, I thank the referee for careful reading of the manuscript, and for several suggestions and remarks that greatly improved the paper.

\end{document}